\title{The Deffuant model on $\Z$ with higher-dimensional
opinion spaces}
\author{Timo Hirscher\thanks{Research supported by a grant from the Swedish Research Council}
\\\normalsize Chalmers University of Technology}
\theoremstyle{break}
\newtheorem{theorem}{Theorem}[section]
\newtheorem{lemma}{Lemma}[section]
\newtheorem{proposition}{Proposition}[section]
\newtheorem{definition}{Definition}
\newtheorem*{remark}{Remark}
\newtheorem{example}{Example}[section]
\let\c@proposition\c@theorem
\let\c@lemma\c@theorem
\let\c@example\c@theorem
\newenvironment{proof}{\noindent{\sc Proof:}}{\vspace{-0.5cm}~\hfill $\square$\vspace{0.5cm}}
\newcommand\N{\mathbb{N}}
\newcommand\R{\mathbb{R}}
\newcommand\Z{\mathbb{Z}}
\newcommand\E{\mathbb{E}\,}
\newcommand\Prob{\mathbb{P}}
\renewcommand\epsilon{\varepsilon}
\renewcommand\phi{\varphi}
\DeclareMathOperator\supp{supp}
\DeclareMathOperator\conv{conv}
\newcommand\D{\mathscr{D}}
\newcommand{\n}[1]{\lVert {#1}\rVert_2}
\definecolor{darkblue}{rgb}{0,0,.5}
\begin{document}
\newpage
\maketitle
%%%%%%%%%%%%%%%%%%%%%%%%%%%
% abstract, keywords and Subject classification are optional.
%%%%%%%%%%%%%%%%%%%%%%%%%%%
\begin{abstract}
When it comes to the mathematical modelling of social interaction patterns, a number of
different models have emerged and been studied over the last decade, in which individuals 
randomly interact on the basis of an underlying graph structure and share their opinions.
A prominent example of the so-called bounded confidence models is the one introduced by
Deffuant et al.: Two neighboring individuals will only interact if their opinions do not
differ by more than a given threshold $\theta$. We consider this model on the line graph
$\mathbb{Z}$ and extend the results that have been achieved for the model with real-valued
opinions by considering vector-valued opinions and general metrics measuring the distance
between two opinion values. As in the univariate case there turns out to exist a critical
value $\theta_\text{\upshape c}$ for $\theta$ at which a phase transition in the long-term
behavior takes place, but $\theta_\text{\upshape c}$ depends on the initial distribution in
a more intricate way than in the univariate case.
\end{abstract}

% Most people don't use these, so they are "commented out"
% by starting the lines with a "%"
%\begin{keywords}
%   \LaTeX, typesetting
%\end{keywords}

%\begin{AMS}
%   50C60, 18C25
%\end{AMS}

%%%%%%%%%%%%%%%%%%%%%%
% % Here is the start of the Text
%%%%%%%%%%%%%%%%%%%%%%
\section{Introduction}\label{intro}

Consider a simple graph $G=(V,E)$ and assume the vertex set $V$ to be either finite or countably
infinite with bounded maximal degree. The vertices are assumed to represent individuals and each of
them is assigned an opinion value.
The edges in $E$ -- being connections between individuals -- are understood to embody the possibility of mutual
influence. For that reason it is no restriction to focus on connected graphs, as the components could be treated
individually otherwise.
From different directions including social sciences, physics and mathematics, there has been raised
interest in various models for what is called {\itshape opinion dynamics} and deals with the evolution of
such a system under a given set of interaction rules. These models are qualitatively different but share similar ideas,
see \cite{Survey} for an extensive survey.\\[0.5em]
\noindent The {\em Deffuant model} (introduced by Deffuant et al.\ \cite{Model})
is one of those and features two parameters, the confidence bound $\theta>0$ and the convergence parameter 
$\mu\in(0,\tfrac 12]$, shaping the willingness to approach the other individual's opinion in a compromise.
There are two types of randomness in the model: One is the random {\em initial configuration}, meaning that at
time $t=0$ the vertices are assigned identically distributed opinions, the other are the {\em random encounters}
thereafter. Serving as a regime for the latter, all the edges in $E$ are assigned unit rate Poisson processes,
which are independent of one another and the initial configuration. Whenever a Poisson event occurs on an edge, the 
corresponding adjacent vertices interact in the manner described below. Just like in most of the analyses of this
model, we will consider i.i.d.\ initial opinion values, but comment on how the considerations can be generalized.

By $\eta_t(v)$ we denote the opinion value at vertex $v\in V$ at time $t\geq0$. The current value
will not change until at some future time $t$ a Poisson event occurs at one of the edges incident to $v$,
say $e=\langle u,v\rangle$, which then might cause an update. Let $\eta_{t-}(u):=\lim_{s\uparrow t}\eta_s(u)=a$ and
$\eta_{t-}(v):=\lim_{s\uparrow t}\eta_s(v)=b$ be the two opinion values of $u$ and $v$, just before this happens.

If these opinions lie at a distance less than the confidence bound $\theta$ from one another, they will
symmetrically take a step, whose size is scaled by $\mu$, towards a common compromise, if not they stay unchanged.
Although there is a section on vector-valued binary opinions in the original paper by Deffuant et al.\ \cite{Model},
using a different model, the Deffuant model with the interaction rule just described was originally only
defined for opinions being real-valued and the absolute value as notion of distance. In order to broaden the original
scope of this model to vector-valued opinions, the natural replacement for the absolute value is the Euclidean
distance $$d(x,y)=\n{ x-y}=\sqrt{(x-y)^2},\text{ for all }x,y\in\R^k.$$
Given this measure of distance, the rule for opinion updates in the Deffuant model reads as follows:

 \begin{equation*}
     \eta_t(u) = \left\{ \begin{array}{ll}
                     a+\mu(b-a) & \mbox{if $\n{ a-b}\leq\theta$,} \\
                     a & \mbox{otherwise}
                     \end{array} \right.
 \end{equation*}                     
and similarly \vspace{-0.75cm}\begin{align}\label{dynamics}\end{align}\vspace{-0.75cm}

 \begin{equation*}
     \eta_t(v) = \left\{ \begin{array}{ll}
                     b+\mu(a-b) & \mbox{if $\n{ a-b}\leq\theta$,} \\
                     b & \mbox{otherwise.}
                     \end{array} \right.
 \end{equation*}
Note that choosing $k=1$ gives back the original model. 

As the assumptions on the graph force $E$ to be countable, there will almost surely
be neither two Poisson events occurring simultaneously nor a limit point in time for the Poisson events on edges
incident to one fixed vertex.
Yet in addition to that there is a more subtle issue in how the simple pairwise interactions shape transitions
of the whole system in the infinite setting, putting it into question whether the whole process is well-defined
by the update rule (\ref{dynamics}).
For infinite graphs with bounded degree, however, this problem is settled by standard techniques
in the theory of interacting particle systems, see Thm.\ 3.9 on p.\ 27 in \cite{Liggett}.
\vspace*{1em}

\noindent One of the most natural questions in this context -- motivated by interpretations coming from social 
science -- seems to be, under what conditions the individual opinions will converge to a common consensus in the
long run and under what conditions they are going to split up into groups of individuals holding different opinions
instead. In this regard let us define the following types of scenarios for the asymptotic behavior of the Deffuant
model on a connected graph as time tends to infinity:

\begin{definition}\label{states}
\begin{enumerate}[(i)]
\item {\itshape No consensus}\\
There will be finally blocked edges, i.e.\ edges $e=\langle u,v\rangle$ s.t.
$$\n{\eta_t(u)-\eta_t(v)}>\theta,$$
for all times $t$ large enough. Hence the vertices fall into different opinion groups.
\item {\itshape Weak consensus}\\
Every pair of neighbors $\{u,v\}$ will finally concur, i.e.
$$\lim_{t\to\infty}\n{\eta_t(u)-\eta_t(v)}=0.$$
\item {\itshape Strong consensus}\\
The value at every vertex converges, as $t\to\infty$, to a common limit $l$, where
$$l=\begin{cases}\text{the average of the initial opinion values},&\text{if }G\text{ is finite}\\
                 \E\eta_0,&\text{if }G\text{ is infinite}\end{cases}$$
and $\mathcal{L}(\eta_0)$ denotes the distribution of the initial opinion values.\end{enumerate}
\end{definition}

\noindent 
The first analyses of the Deffuant model and similar opinion dynamics were strongly simulation-based
and thus confined to a finite number of agents. In \cite{sim1} for example, Fortunato simulated the long-term
behavior of the Deffuant model on four different kinds of finite graphs: Two deterministic examples -- the complete
graph and the square lattice -- as well as two random graphs -- those given by the Erd\H{o}s-R\'enyi model as well
as the Barab\'asi-Albert model. He found strong numerical evidence that, given initial opinions that are
independently and uniformly distributed on $[0,1]$, a confidence threshold $\theta$ less than $\tfrac12$ leads to a
fragmentation of opinions, $\theta>\tfrac12$ leads to a consensus -- irrespectively of the underlying
graph structures that were considered. Later, the simulation studies were extended to the generalization of the
Deffuant model to higher-dimensional opinion values, see for instance \cite{sim2}.

There are however crucial differences between the interactions on a finite compared to an infinite graph.
In the finite case, statements about consensus or fragmentation tend to be valid not with probability $1$ but at
best with a probability that is close to $1$: 
In the standard case of i.i.d.\ $\text{\upshape unif}([0,1])$ initial opinions for example,
any non-trivial confidence bound, i.e.\ $\theta\in(0,1)$, can lead to either consensus or fragmentation depending
on the initial values and the order of interactions. Furthermore, the fact that the dynamics (\ref{dynamics})
preserves the opinion average of two interacting agents implies that strong consensus follows from weak consensus
on a finite graph. This does not have to hold in an infinite setting.

The first major step in terms of a theoretical analysis of the model on an infinite graph was taken by Lanchier
\cite{Lanchier}, who treated the model on the line graph $\Z$ -- similarly with an i.i.d.\ $\text{\upshape unif}([0,1])$
configuration. His main result implies that there is a phase transition at $\theta=\tfrac12$ from a.s.\ no
consensus to a.s.\ weak consensus. These findings were reproven and slightly sharpened by Häggström \cite{ShareDrink} 
to the statement of Theorem \ref{on Z} below, using a non-random pairwise averaging procedure on
$\Z$ which he termed {\em Sharing a drink} (SAD) to get a workable representation of the opinion values at times
$t>0$.

Using his line of argument, the results were generalized to initial distributions other than $\text{\upshape unif}([0,1])$
by Häggström and Hirscher \cite{Deffuant} as well as Shang \cite{Shang}, independently. In \cite{Deffuant}, the
analysis of the Deffuant model was in addition to that extended to other infinite graphs, namely higher-dimensional
integer lattices $\Z^d$ and the infinite cluster of supercritical i.i.d.\ bond percolation on these lattices.

\vspace*{1em}
\noindent
In this paper we stay on the infinite line graph, that is the integer numbers $\Z$ with consecutive integers
forming an edge. The direction in which we want to broaden the analysis is -- as already indicated -- the
generalization of the Deffuant model on $\Z$ to vector-valued opinions. In Section 2, we give a brief
summary of the results for real-valued opinions derived in \cite{Deffuant}, together with the key ideas
and tools that were used there.

In Section 3 we establish corresponding results for the case of higher-dimen\-sional opinions sticking, as indicated
above, to the Euclidean norm as measure of distance between the opinions of interacting agents. Actually,
the main results (Theorem \ref{nogap} and \ref{gapsEucl}) in this section match the statement for real-valued
opinions (Theorem \ref{gen}) in the sense that the radius of the initial distribution as well as the largest gap
in its support -- the generalized definitions of which you will find in Definition \ref{radius} and \ref{gap} --
determine the critical value for $\theta$ at which there is a phase transition from a.s.\ no consensus to a.s.\ 
strong consensus. While the concept of a distribution's radius straightforwardly transfers to higher dimensions,
the one of a gap has to be properly redefined and investigated. Doing this, we can in fact characterize the support
of the opinion values at times $t>0$, see Proposition \ref{supp_t}. Even though we will throughout the paper
consider the initial opinions to be i.i.d.\ it is mentioned in the remark after Theorem \ref{gapsEucl}, how the
arguments can be extended to particular dependent initial configurations in the way it was done in \cite{Deffuant}.

Section 4 finally deals with the generalization of the Deffuant model to distance measures other
than the Euclidean, in both one and higher dimensions. We pin down properties a general metric $\rho$ (used to
determine whether two opinions are close enough to compromise or not) needs to have in order to allow for the
results from Section 3 to be preserved (see Theorem \ref{nogaprho} and \ref{gapsrho}). Examples are
given to illustrate the necessity of the requirements imposed on $\rho$.

\vspace*{1em}
\noindent
At this point it should be mentioned that the vectorial model that was already introduced in the original paper
by Deffuant et al.\ \cite{Model} and analyzed quite recently by Lanchier and Scarlatos \cite{Lanchier2} does not
fit the general framework of this paper. Unlike all opinion dynamics considered here, its update rule is different
from (\ref{dynamics}) and especially not average preserving, leading to substantial qualitative differences.

\section{Background on the univariate case}\label{1d}

\begin{theorem}[\bf Lanchier]\label{on Z}
    Consider the Deffuant model on the graph $(\Z,E)$, where $E=\{\langle v,v+1\rangle, v\in\Z\}$
    with i.i.d.\ {\upshape unif}$([0,1])$ initial configuration and fixed $\mu\in(0,\tfrac12]$.
    \begin{enumerate}[(i)]
    \item If $\theta>\tfrac12$, the model converges almost surely to strong consensus, i.e. with
    probability $1$ we have: $\lim_{t\to\infty}\eta_t(v)=\tfrac12$ for all $v\in\Z$.
    \item If $\theta<\tfrac12$ however, the integers a.s.\ split into (infinitely many) finite clusters
    of neighboring individuals asymptotically agreeing with one another, but no global consensus is approached.
    \end{enumerate}
\end{theorem}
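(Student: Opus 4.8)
The plan is to read off the long-run behaviour from a single edge and to exploit the translation-invariance and ergodicity of the whole system: since the initial opinions are i.i.d.\ and the Poisson clocks on the edges are i.i.d., the configuration-valued process $(\eta_t)_{t\ge0}$ is stationary and ergodic under the shift of $\Z$, so every shift-invariant event --- in particular ``there exists a finally blocked edge'' and ``$\eta_t(v)\to\tfrac12$ for every $v$'' --- has probability $0$ or $1$. Two structural facts are used throughout: an update of an edge $\langle u,v\rangle$ preserves $\eta(u)+\eta(v)$, so that the opinions of any vertex set isolated from the rest at all times after some $T$ stay in the convex hull of their time-$T$ values; and, following H\"aggstr\"om \cite{ShareDrink}, the \emph{Sharing a drink} representation $\eta_t(v)=\sum_{u\in\Z}\lambda_t^{(v)}(u)\,\eta_0(u)$, in which the probability kernels $\lambda_t^{(v)}$ depend only on the encounter process and not on the initial opinions, so that the asymptotics of $\eta_t(v)$ are governed by how much the kernels $\lambda_t^{(v)}$ spread out.

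\textbf{The subcritical regime $\theta<\tfrac12$.} Here one wants a positive-probability event on which some edge is \emph{finally} blocked. Fix a large $L$ and a small $\epsilon>0$ with $1-2\epsilon$ comfortably larger than $\theta$, and consider the event that the initial opinions on $\{-L+1,\dots,0\}$ all lie in $[0,\epsilon]$ while those on $\{1,\dots,L\}$ all lie in $[1-\epsilon,1]$; this has positive probability, and the edge $\langle0,1\rangle$ is blocked at time $0$. The task is to show it stays blocked: as long as it is, the left reservoir $\{-L+1,\dots,0\}$ can only gain mass through the single edge $\langle-L,-L+1\rangle$, at rate one and by at most $\mu\theta$ per event, and symmetrically on the right, so the slack $1-2\epsilon-\theta>0$ is eroded only slowly. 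The point --- and the step that needs genuine work --- is to show that, with probability bounded away from $0$ uniformly in $L$, the ``contamination front'' entering from outside never erodes enough of this slack to bring the two sides within $\theta$ of each other; this is a large-deviation (or multi-scale block) estimate expressing that below the critical threshold such an interface is dynamically stable, not merely stable under the crude convex-hull bound. Granting it, there is a finally blocked edge with probability bounded away from $0$, so by the ergodicity above there are a.s.\ infinitely many finally blocked edges in both directions; at positive density this forces all clusters between consecutive ones to be finite, and maximality forces each such finite cluster to reach consensus internally, which is precisely (ii).

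\textbf{The supercritical regime $\theta>\tfrac12$.} First one establishes weak consensus, i.e.\ that a.s.\ \emph{no} edge is finally blocked. Suppose some edge were; then $\Z$ is eventually cut there, the finally blocked edges partition $\Z$ into clusters, and --- by the convex-hull fact --- a finite isolated cluster converges to the average of its own opinions while the limits of two adjacent clusters must differ by more than $\theta>\tfrac12$. The heart of the supercritical case is that this cannot be produced dynamically once $\theta>\tfrac12$: in the Sharing-a-drink picture every edge is flattened infinitely often and the kernels $\lambda_t^{(v)}$ delocalise, $\max_u\lambda_t^{(v)}(u)\to0$; this is exactly where the constant $\tfrac12$ enters, as the largest interface jump that survives the averaging. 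Given weak consensus, strong consensus follows: delocalisation of $\lambda_t^{(v)}$ together with a law of large numbers along the profile --- $\mathrm{Var}\big(\sum_u\lambda_t^{(v)}(u)\,\eta_0(u)\big)=\tfrac1{12}\sum_u\lambda_t^{(v)}(u)^2\le\tfrac1{12}\max_u\lambda_t^{(v)}(u)\to0$, with mean $\tfrac12$ --- gives $\eta_t(v)\to\tfrac12$, first in probability and then, upgrading via convergence of the profiles, almost surely. This last step genuinely uses the infiniteness of $\Z$, since on a finite graph strong consensus would instead be immediate from average preservation.

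\textbf{Main obstacle.} Everything funnels into the dichotomy of whether a ``low versus high'' interface between large blocks of $\text{unif}([0,1])$ opinions is dynamically stable or is dissolved by the averaging, and the threshold is $\theta=\tfrac12$ because that is the borderline separation above which two macroscopic uniform populations cannot keep their averages more than $\theta$ apart, and below which a polarised window can defend itself. I expect the supercritical direction --- proving that interfaces really do dissolve when $\theta>\tfrac12$, which is where the SAD analysis does its real work --- to be the hard part; the subcritical direction is comparatively soft, needing only one positive-probability trap plus the ergodic upgrade.
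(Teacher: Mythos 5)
Your proposal has the right general ingredients (ergodicity, average preservation, the SAD representation, a positive-probability trap), but both directions contain genuine gaps, and in each case the gap sits exactly where the paper's argument (following H\"aggstr\"om's reproof, sketched in Section 2 and reused in Theorem \ref{nogap}(a)) does its real work. In the subcritical case you explicitly defer the key step --- that the ``contamination front'' never erodes the slack of your two opposing reservoirs --- to an unproved ``large-deviation / multi-scale'' estimate. Worse, as formulated that estimate cannot be the right mechanism: if finite blocks at $[0,\epsilon]$ and $[1-\epsilon,1]$ were dynamically stable whenever $\theta<1-2\epsilon$, the threshold would be $1$, not $\tfrac12$; nothing in your construction explains why stability fails for $\theta\in(\tfrac12,1)$. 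The correct mechanism is that a \emph{single} vertex with opinion at distance more than $\theta+\epsilon$ from the mean $\tfrac12$ is shielded on both sides by $\epsilon$-flat neighbors: flatness is a condition on \emph{all} partial averages along a half-line, and the SAD representation (Prop.\ 5.1 in \cite{ShareDrink}) shows deterministically that, as long as the separating edge has seen no update, the flat neighbor's opinion remains a convex combination of those averages and hence within $\epsilon$ of $\tfrac12$. This is why the threshold is the radius $\tfrac12$ and why no quantitative control of a contamination front is needed.

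In the supercritical case there are two concrete errors. First, the SAD weights $\lambda_t^{(v)}$ are \emph{not} independent of the initial opinions: whether a Poisson event causes an update depends on whether the two current opinions are within $\theta$, which depends on $\{\eta_0(u)\}$. Consequently your identity $\mathrm{Var}\bigl(\sum_u\lambda_t^{(v)}(u)\,\eta_0(u)\bigr)=\tfrac1{12}\sum_u\lambda_t^{(v)}(u)^2$ is unjustified, and the whole variance route to strong consensus collapses. Second, the delocalisation claim $\max_u\lambda_t^{(v)}(u)\to0$ for $\theta>\tfrac12$ is asserted, not proved, and it is precisely the hard content; you never say where $\tfrac12$ enters the proof of it. The paper's route avoids both issues: two-sidedly $\epsilon$-flat vertices exist a.s.\ (positive probability plus ergodicity), unimodality of SAD profiles (La.\ 6.3 in \cite{ShareDrink}) confines their opinions within $6\epsilon$ of $\tfrac12$ for all time, the energy argument (Prop.\ 6.1, using $W_t(v)=\eta_t(v)^2$ and the loss $2\mu(1-\mu)\delta^2$ per non-trivial compromise) shows every edge is eventually either blocked or concurring, and for $6\epsilon<\theta-\tfrac12$ blocking is impossible at a flat vertex since all opinions lie in $[0,1]$; consensus then propagates outward by induction. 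You would need to supply the energy lemma and the unimodality-based confinement (or a correct proof of delocalisation that is measurable with respect to the right filtration) before your outline becomes a proof.
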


\noindent
Accordingly, for independent initial opinions that are uniform on $[0,1]$, the critical value $\theta_\text{c}$
equals $\frac12$, with subcritical values of $\theta$ leading a.s.\ to no consensus and supercritical ones
a.s.\ to strong consensus. The case when the confidence bound actually takes on value $\theta_\text{c}$ is still
an open problem. The ideas Häggström \cite{ShareDrink} used to reprove the above result were adapted
to accommodate more general univariate initial distributions leading to a similar
statement for all such having a first moment $\E\eta_0\in\R\cup\{-\infty,+\infty\}$, see Thm.\ 2.2 in \cite{Deffuant},
which reads as follows: 

\begin{theorem}\label{gen}
Consider the Deffuant model on $\Z$ with real-valued i.i.d.\ initial opinions.
\begin{enumerate}[(a)]
   \item Suppose the initial opinion of all agents follows an arbitrary bounded distribution $\mathcal{L}(\eta_0)$
         with expected value $\E\eta_0$ and $[a,b]$ being the smallest closed interval containing its support.
         If $\E\eta_0$ does not lie in the support, let $I\subseteq[a,b]$ be the maximal, open interval such that
         $\E\eta_0$ lies in $I$ and $\Prob(\eta_0\in I)=0$. In this case let $h$ denote the length of $I$, otherwise 
         set $h=0$.
         
         Then the critical value for $\theta$, where a phase transition from a.s.\ no consensus to a.s.\ strong
         consensus takes place, becomes $\theta_\text{\upshape c}=\max\{\E\eta_0-a,b-\E\eta_0,h\}$.
         The limit value in the supercritical regime is $\E\eta_0$.
   \item Suppose the initial opinions' distribution is unbounded but its expected value exists, either in the 
         strong sense, i.e.\ $\E\eta_0\in\R$, or the weak sense, i.e.\ $\E\eta_0\in\{-\infty,+\infty\}$.
         Then the Deffuant model with arbitrary fixed parameter $\theta\in(0,\infty)$ will a.s.\ behave
         subcritically, meaning that no consensus will be approached in the long run.
\end{enumerate}
\end{theorem}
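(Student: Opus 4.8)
The plan is to exploit the Sharing-a-drink (SAD) representation from \cite{ShareDrink,Deffuant}, which expresses each opinion value $\eta_t(v)$ as a convex combination $\sum_u \lambda_u^{t,v}\,\eta_0(u)$ of the initial values, where the weight vectors $(\lambda_u^{t,v})_u$ are probability vectors evolving by the same pairwise-averaging dynamics on a fixed "profile" and are independent of the initial configuration. For part (b), I would argue as follows. Fix $\theta$. If $\E\eta_0 \in \R$: by the ergodic theorem, for any $\epsilon>0$ there are, almost surely, blocks of consecutive sites on which the empirical average of the $\eta_0$'s is within $\epsilon$ of $\E\eta_0$; but because the distribution is unbounded (say unbounded above), there are also, a.s.\ and with positive density, isolated sites $v$ with $\eta_0(v)$ arbitrarily large, and moreover infinitely many sites $v$ with $\eta_0(v) - \eta_0(v-1) > \theta$ and $\eta_0(v-1) > \E\eta_0 + \theta$, say — more to the point, one wants to produce a finite blocking structure. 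The cleanest route: find a.s.\ infinitely many edges $\langle v, v+1\rangle$ such that the opinions to the left of the edge can never rise above some level $L$ and those to the right can never fall below $L' > L + \theta$. Since averaging preserves the property that an opinion stays within the convex hull (interval) of the initial opinions it has "absorbed," and since crossing the edge $\langle v,v+1\rangle$ requires the two adjacent values to be within $\theta$, I would locate stretches of sites whose initial values are all, say, $\leq \E\eta_0 + 1$ (these exist with positive density by the law of large numbers applied to indicators, since $\Prob(\eta_0 \leq \E\eta_0+1)>0$ unless the distribution is trivial) flanked by a single site with an enormous value; the huge value can only be diluted by absorbing many small values, but it will always dominate, so the edge separating it from its small neighbor can be crossed only finitely often — in fact one shows it is eventually blocked. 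Iterating on both sides (and handling $\E\eta_0 = \pm\infty$ by noting that then there is an even stronger asymmetry) yields a.s.\ infinitely many finally blocked edges, hence no consensus.

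For part (a), the supercritical direction ($\theta > \theta_{\mathrm c}$ implies a.s.\ strong consensus) is the substantive one and I would import it essentially verbatim from the proof of the corresponding statement in \cite{ShareDrink,Deffuant}: using the SAD profile and a flow/fixation argument, show that when $\theta$ exceeds all of $\E\eta_0 - a$, $b - \E\eta_0$, and $h$, no edge is eventually blocked (weak consensus), and then that the SAD weight vectors spread out so that $\eta_t(v) \to \E\eta_0$ for every $v$. The role of the three quantities $\E\eta_0 - a$, $b-\E\eta_0$, $h$: the first two guarantee that once opinions have moved close to $\E\eta_0$ they are within $\theta$ of everything in $[a,b]$, and $h$ (the length of the mean-straddling gap in the support) guarantees that opinions on the two sides of the gap can actually interact once they have migrated toward $\E\eta_0$; if $\theta < h$, the two sides of the gap can never communicate and one gets two (or more) asymptotic clusters, giving no consensus, and if $\theta < \max\{\E\eta_0 - a, b - \E\eta_0\}$ one again produces finally blocked edges by the absorption argument sketched above for (b). For the critical-from-below cases one needs the matching lower bound: for $\theta < \theta_{\mathrm c}$, a.s.\ no consensus. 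This is done by the same "find a persistently-blocked edge" technique, choosing the blocking level inside the offending gap or near the extreme point $a$ or $b$, and using that averages of initial opinions sampled from the support stay confined to $[a,b]$ while a positive density of sites have values near the extremes.

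The main obstacle I expect is making the absorption/blocking argument fully rigorous in the dynamical (continuous-time, infinitely many interactions) setting: one must show not merely that a given edge is blocked at some times but that it is \emph{finally} blocked, which requires controlling the range of initial values an opinion can absorb before that edge would need to be crossed, and ruling out that a long sequence of interactions gradually ferries extreme mass across. The standard device is a monotonicity/coupling argument (opinions are monotone in the initial configuration under the SAD dynamics) together with a renewal-type construction of "barrier" configurations occurring at a positive density of locations; getting the quantitative thresholds to line up exactly with $\max\{\E\eta_0 - a,\, b-\E\eta_0,\, h\}$ — in particular the $h$ term, which is genuinely new compared to the $\mathrm{unif}([0,1])$ case — is where the care is needed. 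Everything else (well-posedness of the dynamics, the SAD representation, the ergodic-theorem inputs) is available from the cited results and the setup in Section \ref{1d}.
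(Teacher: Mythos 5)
Your overall architecture---SAD representation, ergodicity to get a positive density of good sites, the energy/fixation argument for the supercritical direction, and the gap $h$ as an impassable barrier when $\theta<h$---matches the machinery this paper describes in Section \ref{1d} and adapts in its higher-dimensional proofs, and deferring the supercritical direction to \cite{ShareDrink,Deffuant} is legitimate since Theorem \ref{gen} is quoted here as Thm.\ 2.2 of \cite{Deffuant}.

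The genuine gap is in your blocking construction, which is the crux of part (b) and of the subcritical case $\theta<\max\{\E\eta_0-a,\,b-\E\eta_0\}$ in part (a). A \emph{finite} stretch of sites with values at most $\E\eta_0+1$ flanked by one enormous value does not yield a finally blocked edge: the far end of the stretch is exposed to the rest of the line, which can ferry large values inward and lift the whole stretch to within $\theta$ of the extreme site. The claim that "the huge value will always dominate / can only be diluted finitely" has no force in the Deffuant dynamics---there is no domination, only thresholded averaging, and once a first interaction across the edge occurs nothing prevents arbitrarily many more. The device that actually closes this is one-sided $\epsilon$-flatness: one conditions on \emph{all} partial averages $\frac{1}{n+1}\sum_{u=v+1}^{v+1+n}\eta_0(u)$, $n\geq0$, lying in $[\E\eta_0-\epsilon,\E\eta_0+\epsilon]$ (or in a half-line $H$), an event of positive probability by the SLLN plus local modification of finitely many sites; then Prop.\ 5.1 of \cite{ShareDrink}, via unimodality of SAD profiles, shows that as long as the edge $\langle v,v+1\rangle$ has never been used, $\eta_t(v+1)$ is a convex combination of exactly these half-line averages and hence trapped near the mean for \emph{all} $t$, whatever happens to its right. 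Imposing this on both neighbors $v\pm1$ of a site with an extreme initial value gives two edges that are never crossed, not merely crossed finitely often. Your proposed fallback does not rescue the finite-stretch version: the Deffuant dynamics is \emph{not} monotone in the initial configuration (raising one initial value can switch an interaction on or off through the indicator $\mathbbm{1}\{|a-b|\leq\theta\}$), so the "monotonicity/coupling plus renewal barriers" route is unavailable; only the threshold-free SAD averaging is monotone, and the whole point of the flatness argument is to transfer its conclusions to the thresholded dynamics. The case $\E\eta_0\in\{-\infty,+\infty\}$ likewise needs its own half-line version of this argument rather than an appeal to "stronger asymmetry".
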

  \noindent
  The situation at criticality is unsolved with the exception of the case when the gap around the mean
  is larger than its distance to the extremes of the initial distribution's support. Given this condition, however,
  the following proposition (which is Prop.\ 2.4 in \cite{Deffuant}) settles the question about the long-term
  behavior for critical $\theta$:
  
\begin{proposition}\label{crit}
 Let the initial opinions be again i.i.d.\ with $[a,b]$ being the smallest closed interval containing
 the support of the marginal distribution,
 and the latter feature a gap $(\alpha,\beta)$ of width $\beta-\alpha>\max\{\E\eta_0-a,b-\E\eta_0\}$ around its
 expected value $\E\eta_0\in[a,b]$.\vspace*{0.5em}

\noindent At criticality, that is for $\theta=\theta_\text{\upshape c}
 =\max\{\E\eta_0-a,b-\E\eta_0,\beta-\alpha\}=\beta-\alpha$, we get
 the following: If both $\alpha$ and $\beta$ are atoms of the distribution $\mathcal{L}(\eta_0)$, i.e.\ 
 $\Prob(\eta_0=\alpha)>0$ and $\Prob(\eta_0=\beta)>0$, the system approaches a.s.\ strong consensus. However, it
 will a.s.\ lead to no consensus if either $\Prob(\eta_0=\alpha)=0$ or $\Prob(\eta_0=\beta)=0$.
\end{proposition}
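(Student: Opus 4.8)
The plan is to treat the two regimes separately, the second carrying essentially all the difficulty.

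\textsc{No consensus when $\alpha$ or $\beta$ is not an atom.}
Assume without loss of generality $\Prob(\eta_0=\alpha)=0$ (the case $\Prob(\eta_0=\beta)=0$ is symmetric). Call a vertex \emph{low} if $\eta_0(v)\le\alpha$ and \emph{high} if $\eta_0(v)\ge\beta$; since the support of $\mathcal{L}(\eta_0)$ misses $(\alpha,\beta)$ every vertex is one or the other, and as the gap lies around $\E\eta_0$ we have $\Prob(\eta_0\le\alpha)>0$ and $\Prob(\eta_0\ge\beta)>0$, whence a.s.\ there are infinitely many low and infinitely many high vertices and, in particular, a.s.\ an edge joining a low vertex to a high one. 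The key point is that an update via (\ref{dynamics}) replaces the opinion of an interacting vertex by a convex combination of its old value and the neighbour's, and that at any finite time a vertex's opinion depends on only finitely many initial values. An induction over the (a.s.\ locally finite) sequence of Poisson events then shows that a high vertex always keeps opinion $\ge\beta$ and a low vertex always keeps opinion $<\alpha$: a low vertex having so far interacted only with low vertices has opinion a convex combination of finitely many values each $<\alpha$, hence itself $<\alpha$; and a high vertex (opinion $\ge\beta$) and a low vertex (opinion $<\alpha$) can never interact, since their distance exceeds $\beta-\alpha=\theta$. Hence along any low--high edge $\langle u,v\rangle$ one has $\eta_t(v)-\eta_t(u)\ge\beta-\eta_t(u)>\beta-\alpha=\theta$ for all $t\ge0$, so this edge is blocked at all times; as such an edge a.s.\ exists, the model a.s.\ reaches no consensus.

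\textsc{Strong consensus when both $\alpha$ and $\beta$ are atoms.}
First I would reduce to showing weak consensus: for i.i.d.\ bounded initial opinions on $\Z$ the average-preserving character of (\ref{dynamics}), together with translation covariance and ergodicity, upgrades weak consensus to strong consensus with common limit $\E\eta_0$, just as in \cite{ShareDrink,Deffuant}, and the present hypotheses do not affect this step. So it remains to rule out, a.s., a finally blocked edge. The significance of the atoms is the following: since, by the hypothesis $\theta\ge\max\{\E\eta_0-a,\,b-\E\eta_0\}$, the low opinions lie in $[a,\alpha]\subseteq(\alpha-\theta,\alpha]$ and the high ones in $[\beta,b]\subseteq[\beta,\beta+\theta)$, an opinion equal to $\alpha$ is within $\theta$ of every low opinion and of $\beta$, and an opinion equal to $\beta$ is within $\theta$ of every high opinion. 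Thus when $\alpha$ and $\beta$ are atoms the initial ``confidence graph'' (opinions joined when at distance $\le\theta$) is a.s.\ connected -- low opinions reach the $\alpha$-atoms, the $\alpha$-atoms reach the $\beta$-atoms, and those reach the high opinions -- whereas if one endpoint is not an atom the low and the high opinions fall into different components, which is exactly what drove the previous case. Starting from this connectedness I would follow the sharing-a-drink approach of \cite{ShareDrink,Deffuant}: write $\eta_t(v)=\sum_u\xi_t^v(u)\,\eta_0(u)$ as a finitely supported convex combination of the initial values, and show that a.s.\ these profiles ``spread out'' as $t\to\infty$ (no atom of $\xi_t^v$ survives, e.g.\ $\sum_u\xi_t^v(u)^2\to0$), from which $\eta_t(v)\to\E\eta_0$ a.s.\ follows by a law-of-large-numbers argument; this is where the threshold enters, the atoms at the ends of the gap providing, at $\theta=\theta_{\mathrm c}$, the mass transfers that keep a profile from getting trapped in one component.

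The hard part is precisely this last point: showing that at the critical level, and with the help of the boundary atoms, the profiles genuinely spread out rather than freeze. Connectedness of the confidence graph at a given instant is not enough, since the interactions keep moving opinions and one must control, uniformly in time, that the mass pushed across the gap by the $\alpha$- and $\beta$-atoms does not drain away -- which is exactly what fails when $\theta<\theta_{\mathrm c}$ or when an endpoint of the gap carries no mass. Concretely this means re-running the quantitative sharing-a-drink estimates of \cite{ShareDrink,Deffuant} at the boundary value $\theta=\theta_{\mathrm c}$; the weak-to-strong upgrade and the elementary distance checks above, by contrast, are routine given the material recalled in Section \ref{1d}.
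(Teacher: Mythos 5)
Your subcritical half is correct and complete: the partition into low ($<\alpha$) and high ($\ge\beta$) vertices, the induction showing each class is preserved because a low and a high opinion are always at distance strictly greater than $\beta-\alpha=\theta$, and the a.s.\ existence of a low--high edge together give a permanently blocked edge. (Note that this paper does not reprove the proposition -- it cites Prop.\ 2.4 of \cite{Deffuant} -- so the comparison below is with that argument, whose higher-dimensional analogue appears in step (iii) of the proof of Theorem \ref{gapsEucl} and in Example \ref{mu}.)

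The consensus half, however, contains a genuine gap: the step you defer (``re-running the quantitative sharing-a-drink estimates at $\theta=\theta_{\mathrm c}$'', showing the SAD profiles ``spread out'' so that $\sum_u\xi_t^v(u)^2\to0$) is the entire content of the proposition in that direction, and it is not the mechanism the actual proof uses. The proof does not show that weights spread out; it uses a \emph{local modification} exploiting the atoms: with positive probability a finite block of sites carries the exact values $\alpha$ and $\beta$ in proportions approximating $(\beta-\E\eta_0)/(\beta-\alpha)$ and $(\E\eta_0-\alpha)/(\beta-\alpha)$, and since $|\alpha-\beta|=\theta$ these opinions \emph{may} interact at criticality, so a prescribed finite sequence of Poisson events collapses the block to within $\epsilon/6$ of $\E\eta_0$. (This is exactly where the atoms are indispensable: one cannot take small balls around $\alpha$ and $\beta$ as in the non-critical case, because perturbed values would exceed the confidence bound -- see the discussion in Example \ref{mu}.) Combined with the SLLN one obtains, with positive probability and hence by ergodicity a.s.\ infinitely often, two-sidedly $\epsilon$-flat vertices at a positive time $t$; the unimodality of SAD profiles (La.\ 6.3 in \cite{ShareDrink}) traps such vertices within $6\epsilon$ of $\E\eta_0$ forever, the energy argument (Prop.\ 6.1 in \cite{ShareDrink}) shows every edge either finally concurs or is finally blocked, blocking is impossible at a flat vertex since $\max\{\E\eta_0-a,\,b-\E\eta_0\}+7\epsilon<\theta$, and consensus propagates by induction. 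Your opening reduction is also unsound as stated: the paper explicitly notes that on an infinite graph weak consensus need \emph{not} upgrade to strong consensus via average preservation; strong consensus with limit $\E\eta_0$ is obtained directly from the flat points, not from a prior weak-consensus statement. Your observation that the confidence graph on the support is connected precisely when both endpoints are atoms is the right heuristic, but by itself it proves nothing about the dynamics.
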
 
 
\noindent
  Since the same line of reasoning was used in both \cite{ShareDrink} and \cite{Deffuant} to derive the results
  we just stated, it is worth taking a closer look on the key concepts involved, especially as they will be the
  foundation for most of the conclusions drawn in the upcoming sections.

   The presumably most central among these is the idea of {\em flat points}. If $\E\eta_0\in\R$, a vertex $v\in\Z$
   is called {\em$\epsilon$-flat to the right} in the initial configuration $\{\eta_0(u)\}_{u\in\Z}$ if for all
   $n\geq0$:
   \begin{equation}\label{rflat}
     \frac{1}{n+1}\sum_{u=v}^{v+n}\eta_0(u)\in\left[\E\eta_0-\epsilon,\E\eta_0+\epsilon\right].
   \end{equation}
   It is called {\em$\epsilon$-flat to the left} if the above condition is met with the sum running
   from $v-n$ to $v$ instead. Finally, $v$ is called {\em two-sidedly $\epsilon$-flat} if for all $m,n\geq0$
   \begin{equation}\label{tflat}
     \frac{1}{m+n+1}\sum_{u=v-m}^{v+n}\eta_0(u)\in\left[\E\eta_0-\epsilon,\E\eta_0+\epsilon\right].
   \end{equation}
   However, in order to understand how vertices being one- or two-sidedly $\epsilon$-flat in the initial
   configuration play an important role in the further evolution of the configuration another concept is
   indispensable, namely the non-random pairwise averaging procedure Häggström \cite{ShareDrink} called
   {\em Sharing a drink} (SAD).
   
   Think of glasses being placed at all integers, the one at site $0$ being
   brimful, all others empty. Just as in the Deffuant model, neighbors interact and share, but this time without
   randomness and confidence bound. In other words, we start with the initial profile $\{\xi_0(v)\}_{v\in\Z}$,
   given by $\xi_0(0)=1$ and $\xi_0(v)=0$ for all $v\neq0$, and a finite sequence $(e_n)_{n=1}^N$ of edges
   along which updates of the form (\ref{dynamics}) are performed, i.e.\ for the profile $\{\xi_n(v)\}_{v\in\Z}$
   after step $n$ and $e_{n+1}=\langle u,u+1\rangle$ we get 
   $\{\xi_{n+1}(v)\}_{v\in\Z}$ by 
   \begin{equation}\label{transf}\begin{array}{rl}\xi_{n+1}(u)&\!\!\!=\,(1-\mu)\,\xi_{n}(u)+\mu\,\xi_{n}(u+1),\\
                 \xi_{n+1}(u+1)&\!\!\!=\,\mu\,\xi_{n}(u)+(1-\mu)\,\xi_{n}(u+1);\end{array}
   \end{equation}
   all other values stay unchanged.
   
   Elements of $[0,1]^\Z$ that can be obtained in such a way are called
   SAD-profiles. The crucial connection to the Deffuant model is that the opinion value $\eta_t(0)$ at any given
   time $t>0$ can be written as a weighted average of values at time $t=0$ with weights given by an SAD-profile
   (see La.\ 3.1 in \cite{ShareDrink}). The fact that all SAD-profiles share certain properties (the most important
   being unimodality) renders it possible to derive characteristics of the future evolution of the Deffuant dynamics
   given the initial configuration. For instance, the opinion value at a two-sidedly $\epsilon$-flat vertex in the
   initial configuration can never move further than $6\epsilon$ away from the mean (see La.\ 6.3 in
   \cite{ShareDrink}).
   
   These two vital ingredients -- flat points and SAD-profiles -- of the line of argument in \cite{ShareDrink}
   and Sect.\ 2 in \cite{Deffuant} can be adapted in order to analyze the Deffuant model with
   vector-valued opinions, as we will see in the following section.

\section{Deffuant model with multivariate opinions and the Euclidean norm as measure of distance}

Having characterized the long-term behavior of the Deffuant dynamics on $\Z$ starting from a general univariate
i.i.d.\ configuration, the next step of generalization with regard to the marginal initial distribution is,
as indicated in the introduction, to allow for vectors instead of numbers to represent the opinions. Like in
the univariate case, we want the initial opinions to be independent and identically distributed, just now with
some common distribution $\mathcal{L}(\eta_0)$ on $\R^k$. This will ensure ergodicity of the setting (with respect
to shifts) as before.

In this section we will consider $\R^k$ to be equipped with the Borel $\sigma$-algebra generated by the Euclidean
norm, denoted by $\mathcal{B}^k$.

\begin{definition}\label{radius}
If the distribution of $\eta_0$ has a finite expectation, define its {\em radius} by
$$R:=\inf\left\{r>0,\;\Prob\big(\eta_0\in B[\E\eta_0,r]\big)=1\right\},$$
where $B[y, r]:=\{x\in\R^k,\;\n{ x-y}\leq r\}$ denotes the closed Euclidean ball with radius $r$ around $y$.
Note that the radius of an unbounded distribution is infinite. 
\end{definition}

\noindent
The notion of $\epsilon$-flatness easily translates to the new setting by just replacing the intervals by balls:
If $\E\eta_0\in\R^k$, a vertex $v\in\Z$ is called $\epsilon$-flat to the right in the initial configuration
$\{\eta_0(u)\}_{u\in\Z}$ if for all $n\geq0$:
\begin{equation}\label{eucflat}
  \frac{1}{n+1}\sum_{u=v}^{v+n}\eta_0(u)\in B[\E\eta_0,\epsilon],
\end{equation}
similarly for $\epsilon$-flatness to the left and two-sided $\epsilon$-flatness -- compare with (\ref{rflat}) 
and (\ref{tflat}).

With these notions in hand we can state and prove a higher-dimensional analogue of Theorem \ref{gen}, valid for
initial distributions whose support does not feature a substantial gap around the mean. The proof of this result
will be a fairly straightforward adaptation of the methods for the univariate case indicated in Section \ref{1d}.
In contrast, the more general case treated in Theorem \ref{gapsEucl} requires invoking more intricate geometrical
considerations.

\begin{theorem}\label{nogap}
In the Deffuant model on $\Z$ with the underlying opinion space $(\R^k,\n{\,.\,})$ and an initial
opinion distribution $\mathcal{L}(\eta_0)$ we have the following limiting behavior:
\begin{enumerate}[(a)]
\item If $\mathcal{L}(\eta_0)$ has radius $R\in[0,\infty)$ and mass around its mean, i.e.
\begin{equation}\label{matm}
\Prob\big(\eta_0\in B[\E\eta_0,r]\big)>0 \text{ for all }r>0,
\end{equation}
the critical parameter is $\theta_\text{\upshape c}=R$, meaning that for $\theta<R$ we have a.s.\ no consensus
and for $\theta>R$ a.s.\ strong consensus.
\item Let $\eta_0=(\eta_0^{(1)},\dots,\eta_0^{(k)})$ be the random initial opinion vector. If at least one of the
coordinates $\eta_0^{(i)}$ has an unbounded marginal distribution, whose expected value exists (regardless of whether
finite, $+\infty$ or $-\infty$), then the limiting behavior will a.s.\ be no consensus, irrespectively of $\theta$.
\end{enumerate}
\end{theorem}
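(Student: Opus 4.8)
The plan is to mirror the univariate argument from \cite{Deffuant} as closely as possible, exploiting the fact that the key tools---flat points and SAD-profiles---are essentially coordinate-free once distances are measured in the Euclidean norm.

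\medskip\noindent\textbf{Part (a).}
First I would establish the \emph{subcritical} direction, $\theta<R$ implies a.s.\ no consensus. Since $R=\inf\{r:\Prob(\eta_0\in B[\E\eta_0,r])=1\}$, for any $\theta<R$ there is some $r$ with $\theta<r<R$ such that $\Prob(\n{\eta_0-\E\eta_0}>r)>0$, i.e.\ with positive probability a vertex carries an opinion at distance more than $r$ from the mean. By ergodicity (shift-invariance of the i.i.d.\ configuration) such vertices occur with positive density, and in fact one can find, with probability one, infinitely many widely separated vertices $v$ whose initial opinion $\eta_0(v)$ lies in a fixed small ball near a ``far'' point $x_0$ with $\n{x_0-\E\eta_0}>r$, flanked on both sides by long blocks of vertices that are two-sidedly $\epsilon$-flat (such flat blocks exist a.s.\ by the strong law of large numbers applied to partial sums---this is where the finite-radius, hence finite-expectation, hypothesis and the mass-around-the-mean condition \eqref{matm} enter, the latter guaranteeing $\epsilon$-flat points actually exist). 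The SAD representation, adapted verbatim to $\R^k$ (a vector-valued weighted average with the same scalar SAD-weights), shows the opinion at a two-sidedly $\epsilon$-flat vertex stays within $6\epsilon$ of $\E\eta_0$ for all time, just as in La.~6.3 of \cite{ShareDrink}---the estimate is applied coordinatewise or, more cleanly, one notes $\n{\sum w_u \eta_0(u)-\E\eta_0}$ is controlled by the same unimodality argument since $\sum w_u=1$ and the partial sums are ball-confined. Hence the far vertex is sandwiched between regions pinned near the mean; choosing $\epsilon$ small enough that $6\epsilon + (\text{drift room}) < r-\theta$, the edges separating the far vertex from the flat blocks become finally blocked, so no consensus. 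For the \emph{supercritical} direction, $\theta>R$ implies a.s.\ strong consensus: when $\theta>R$ every pair of neighbors always has $\n{\eta_t(u)-\eta_t(v)}\le 2R<\theta$ (all opinions live in $B[\E\eta_0,R]$, a convex set preserved by the averaging dynamics), so no edge is ever blocked; the model then behaves like the unconstrained averaging process, and one invokes the coordinatewise convergence to $\E\eta_0$ exactly as in the univariate supercritical case (apply Theorem~\ref{gen}(a) or its proof to each of the $k$ coordinates, noting that with $\theta$ above the coordinate-diameters there is never a blocked edge in any coordinate either).

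\medskip\noindent\textbf{Part (b).}
Here I would reduce to the univariate unbounded case. Suppose coordinate $i$ has unbounded marginal with expectation existing in $\R\cup\{\pm\infty\}$. The point is that blocked edges in the full model are \emph{implied} by blocked edges detectable from a single coordinate: if $\n{\eta_t(u)-\eta_t(v)}>\theta$ it certainly need not follow from one coordinate, but conversely the dynamics \eqref{dynamics}, \emph{when an update does occur}, moves every coordinate by the same scalar factor $\mu$ toward the neighbor. So the $i$-th coordinate process $\{\eta_t^{(i)}(v)\}$ is a process on $\R$ in which interactions happen along a subset of the Poisson events---namely exactly those where the full $k$-dimensional distance is $\le\theta$. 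That is not the univariate Deffuant model with a fixed threshold, but it \emph{is} dominated by it in the sense that whenever the $i$-th coordinates differ by more than $\theta$ the full distance also exceeds $\theta$, so such an edge is blocked in the true model too. Thus it suffices to show that in the univariate model on $\Z$ with marginal $\mathcal{L}(\eta_0^{(i)})$ and \emph{any} threshold $\theta$, there are a.s.\ finally blocked edges---and that is precisely Theorem~\ref{gen}(b). Concretely: by Theorem~\ref{gen}(b) applied to coordinate $i$, for every fixed $\theta$ the univariate configuration a.s.\ has, somewhere, a ``barrier'' of vertices forcing an $i$-coordinate gap exceeding $\theta$ that no sequence of $\theta$-respecting updates can ever close; translating the barrier construction (it is built from flat blocks plus an extreme value, via SAD-profiles, none of which cares about the other coordinates) into the $k$-dimensional picture, the same edge has full Euclidean distance $\ge |\,i\text{-coordinate distance}\,| >\theta$ forever, giving no consensus.

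\medskip\noindent\textbf{Main obstacle.}
The delicate point---and the one I'd spend the most care on---is the subcritical part of (a): making rigorous the sandwiching of a ``far'' vertex between two-sidedly $\epsilon$-flat blocks and showing the intervening edges are \emph{finally} (not just frequently) blocked. This needs (i) that $\epsilon$-flat points of arbitrarily good quality exist a.s., which in higher dimensions requires the mass-around-the-mean hypothesis \eqref{matm}---without it the balls $B[\E\eta_0,\epsilon]$ could have zero mass and no flat point would ever exist (this is exactly the gap phenomenon deferred to Theorem~\ref{gapsEucl}); and (ii) a quantitative version of the SAD/unimodality bound in the vector setting, ensuring an $\epsilon$-flat vertex's opinion is trapped near $\E\eta_0$ uniformly in time, so that a vertex with initial opinion at distance $>\theta+12\epsilon$ from the mean remains separated from both flanking trapped regions. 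The univariate blocking argument of \cite{Deffuant} extends, but one must check that the event ``this particular edge is eventually blocked'' has probability one rather than merely positive probability, which is handled by a Borel--Cantelli / ergodicity argument over the infinitely many disjoint candidate configurations along $\Z$.
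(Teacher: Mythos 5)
Your part (b) and the broad outline of the subcritical half of (a) are close to the paper's argument, but part (a) contains two genuine problems, one of them fatal. The fatal one is the supercritical direction: you claim that for $\theta>R$ no edge is ever blocked because all opinions stay in $B[\E\eta_0,R]$ and hence neighbors are at distance at most $2R<\theta$. That inequality requires $\theta>2R$, not $\theta>R$; in the regime $R<\theta\le 2R$ (e.g.\ $\eta_0$ uniform on the unit ball in $\R^2$, so $R=1$, with $\theta=\tfrac32$) two neighbors can start at distance close to $2R>\theta$ and the edge between them is blocked at time $0$. The entire difficulty of the supercritical case lives in this regime, and your reduction to an unconstrained averaging process (and thence to Theorem \ref{gen}(a) applied coordinatewise) gives nothing there. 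The paper handles it with two ingredients absent from your proposal: first, two-sidedly $\epsilon$-flat vertices (whose a.s.\ existence uses (\ref{matm}), the SLLN and local modification) never move further than $6\epsilon$ from $\E\eta_0$, so choosing $7\epsilon<\theta-R$ ensures edges incident to them can never be finally blocked; second, the energy argument of Prop.~6.1 in \cite{ShareDrink} (with $W_t(v)$ read as a dot product), which shows that every pair of neighbors a.s.\ either finally concurs or is finally blocked. Combining the two forces the neighbors of two-sidedly flat vertices to concur, and the conclusion propagates inductively as $\epsilon\to0$.

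Secondly, your subcritical construction is internally inconsistent: a vertex adjacent to (or within $O(r/\epsilon)$ of) a site $v$ with $\n{\eta_0(v)-\E\eta_0}>r$ cannot be two-sidedly $\epsilon$-flat, since two-sided flatness requires the average over \emph{every} window containing that vertex -- including windows containing $v$ -- to lie in $B[\E\eta_0,\epsilon]$, which already fails for the two-point window when $\epsilon$ is small. The event you describe has probability zero. The correct tool is one-sided flatness, as in the paper: require that $v+1$ be $\epsilon$-flat to the right, $v-1$ be $\epsilon$-flat to the left, and $\eta_0(v)\notin B[\E\eta_0,\theta+\epsilon]$; the SAD representation (Prop.~5.1 of \cite{ShareDrink}) then shows that as long as the edges $\langle v-1,v\rangle$ and $\langle v,v+1\rangle$ have not transmitted, the values at $v\pm1$ are convex combinations of one-sided averages and so remain in the convex set $B[\E\eta_0,\epsilon]$, which keeps those edges blocked forever by a self-sustaining argument; ergodicity supplies infinitely many such isolated vertices a.s. Your part (b) essentially coincides with the paper's proof, including the (correct) observation that the barrier construction of Theorem \ref{gen}(b) is robust to the $i$-th coordinate process performing only a subset of the univariate updates.
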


\begin{proof}
\begin{enumerate}[(a)]
\item To show the first part is just like in the univariate case (included in part (a) of Theorem 2.2) little
more than following the arguments in the last two sections of \cite{ShareDrink}: The central arguments go through 
even for vector-valued opinions as the crucial properties of the absolute value that were used are shared by its
replacement in higher dimensions, the Euclidean norm. Because of that, we only sketch the main line of reasoning
and refer to Sect.\ 6 in \cite{ShareDrink} and Sect.\ 2 in \cite{Deffuant} for a more thorough presentation of
the arguments.

First of all, the (multivariate) Strong Law of Large Numbers -- in the following abbreviated by SLLN -- tells us
that the averages in (\ref{eucflat}) for large $n$ are close to the mean in Euclidean distance. For $\epsilon>0$
fixed, choose $N\in\N$ such that the event
$$A:=\bigg\{\frac{1}{n+1}\sum_{u=1}^{n+1}\eta_0(u)\in B[\E\eta_0,\tfrac{\epsilon}{3}]\ \text{for all }n\geq N\bigg\}$$
has positive probability. Using (\ref{matm}) and the fact that the initial opinions are i.i.d., we can locally
modify the configuration to conclude that the event
$\{\eta_0(v)\in B[\E\eta_0,\tfrac{\epsilon}{3}] \text{ for }v=1,\dots,N+1\}\cap A$ has positive
probability, implying the $\epsilon$-flatness to the right of site $1$ -- just as it was done in La.\ 4.2 in
\cite{ShareDrink}.

For $\theta<R$, the probability of $\{\eta_0\notin B[\E\eta_0,\theta+\epsilon]\}$ is non-zero for $\epsilon$
small enough, hence a vertex can be at distance larger than $\theta$ from $B[\E\eta_0,\epsilon]$ initially.
Due to the independence of initial opinions, the event that site $-1$ is $\epsilon$-flat to the left, $1$ is
$\epsilon$-flat to the right and $\eta_0(0)\notin B[\E\eta_0,\theta+\epsilon]$ has positive probability.
Using the SAD representation, it follows -- mimicking Prop.\ 5.1 in \cite{ShareDrink} -- that given such an initial
configuration the opinion value at site $1$ will be a convex combination of averages in (\ref{eucflat}) for all times
$t>0$ and thus in $B[\E\eta_0,\epsilon]$, due to the convexity of Euclidean balls. The same holds for site $-1$ and the
half-line to the left. Consequently, the edges $\langle-1,0\rangle$ and $\langle0,1\rangle$ will stay blocked for ever.
Ergodicity of the initial opinion sequence ensures that with probability $1$ (infinitely many) vertices will
get isolated that way, which settles the subcritical case.

In the supercritical regime, i.e.\ $\theta>R$, we focus on two-sidedly $\epsilon$-flat vertices: If site
$0$ is $\epsilon$-flat to the left and $1$ is $\epsilon$-flat to the right, both are two-sidedly $\epsilon$-flat
-- using again the convexity of $B[\E\eta_0,\epsilon]$. By independence this event has positive probability, by
ergodicity we will a.s.\ have (infinitely many) two-sidedly $\epsilon$-flat vertices. Mimicking La.\ 6.3 in
\cite{ShareDrink} literally, we find that vertices which are two-sidedly $\epsilon$-flat in the initial configuration
will never move further than $6\epsilon$ away from the mean, irrespectively of future interactions. Choosing 
$\epsilon>0$ small, such that\ $7\epsilon<\theta-R$ say, will ensure that updates along edges incident
to two-sidedly $\epsilon$-flat vertices will never be prevented by the distance of opinions exceeding the confidence
bound.

The proof of Prop.\ 6.1 in \cite{ShareDrink}, which states that neighbors will either finally concur or the
edge between them be blocked for large $t$, can be adopted as well: Its central idea -- borrowed from physics -- that
every individual starts with an initial amount of energy that is then partly transferred partly lost in interactions
works regardless whether the opinions $\{\eta_t(v)\}_{v\in \Z}$ are shaped by numbers or vectors. Merely in the
current setting, the term $W_t(v)=(\eta_t(v))^2$, that defines the energy at vertex $v$ at time $t$, has to be read
as a dot product.
Again, if the opinions $\eta_{t}(u),\eta_{t}(v)$ of two neighbors are within the confidence
bound but $\n{\eta_{t}(u)-\eta_{t}(v)}\geq\delta$ for some fixed $\delta>0$, $W_t(u)+W_t(v)$ decreases by at least
$2\mu(1-\mu)\delta^2$ when they compromise. This can not happen infinitely often with positive probability
as the expected energy at time $t=0$ is $\E W_0(v)=\E(\eta_0^{\,2})<\infty$ and the expectation of $W_t(v)$ is
both non-increasing with $t$ and non-negative. For details see Prop.\ 6.1 and La.\ 6.2 in \cite{ShareDrink}.

Following from the considerations above, two-sidedly $\epsilon$-flat vertices and their neighbors therefore have
to finally concur with probability $1$, forcing the opinion values of the neighbors to eventually lie at a distance
strictly less than $7\epsilon$ from the mean as well. By our choice of $\epsilon$, this conclusion propagates
inductively showing that the limiting behavior will a.s.\ be strong consensus, if we let $\epsilon$ tend to $0$.

\item In order to prove the second claim, we use part (b) of Theorem \ref{gen}, focussing
on the $i$th coordinate only. Fix $\theta\in(0,\infty)$. Since $$|x_i-y_i|\leq\n{x-y} 
\text{ for all vectors }x,y\in\R^k \text{ and }i\in\{1,\dots,k\},$$
a distance of more than $\theta$ in the $i$th coordinate of the opinion vectors for two neighbors $u,v$ implies that
the edge between them is blocked. The arguments used for unbounded
distributions in Theorem \ref{gen} (see Thm.\ 2.2 in \cite{Deffuant}) show that under the given conditions, there are
a.s.\ vertices that differ more than $\theta$ from both their neighbors in the $i$th coordinate (with respect to
the absolut value) in the initial configuration and this will not change no matter whom their neighbors will
compromise with. Consequently, the corresponding opinion vectors will always be at Euclidean distance more than $\theta$.
\end{enumerate}
\end{proof}\vspace*{-1em}

\begin{remark}
  Pretty much as in the univariate setting, the case where all unbounded coordinates of $\eta_0$ do not have
  an expected value (neither finite nor $+\infty$ nor $-\infty$) remains unsolved by Theorem \ref{nogap}.
\end{remark}

\noindent
When it comes to bounded initial distributions which do have a large gap around the mean, the picture in higher
dimensions drastically changes -- something that 
\par\begingroup \rightskip15em\noindent
will require several preliminary results before we are ready to state and prove this section's main result,
Theorem \ref{gapsEucl}. The major difference to the univariate case
is that with higher-dimensional opinions the update along some edge $\langle u,v\rangle$
can actually lead to a situation, where both $u$ and $v$ come closer to the opinion corresponding to a third
vertex $w$, which lies within the confidence bound of neither $\eta(u)$ nor $\eta(v)$, see the picture on the right.
\par\endgroup

\vspace{-5.0cm}
   \begin{figure}[H]
     \hspace{6.8cm} \includegraphics[scale=1.1]{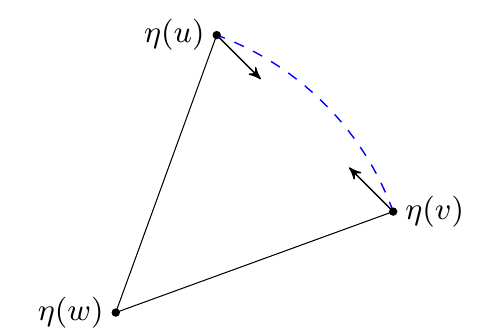}
   \end{figure}
 \vspace{0.38cm}

In the case of real-valued opinions this is impossible, because in that setting an update along
$\langle u,v\rangle$ always increases $\min\{|\eta(u)-\eta(w)|, |\eta(v)-\eta(w)|\}$, if $\eta(w)$ does not lie in
between $\eta(u)$ and $\eta(v)$.

To illustrate how this changes the conditions, let us consider the initial distributions $\text{\upshape unif}(S^{k-1})$,
where $S^{k-1}$ denotes the Euclidean unit sphere in $\R^k$.
For $k=1$ this is just $\text{\upshape unif}(\{-1,1\})$, which by Theorem \ref{gen} has the trivial critical value
$\theta_c=2$. For $k\geq2$ however, the fact that opinions close to each other can compromise in order to form
a central opinion will bring $\theta_c$ down to the radius 1 of the distribution as we will see in the sequel.

The statement of the main result in this section, Theorem \ref{gapsEucl}, resembles very much the one of
Theorem \ref{gen} (a), only the notion of a gap in the initial distribution has to be reinterpreted in the
higher-dimensional setting, making the proof of this generalized result rather technical. However, while establishing
auxiliary results, we will gain additional information about the set of opinion values that can occur in the
Deffuant model at times $t>0$ depending on the initial distribution and the confidence bound. When it comes to the
initial distribution $\mathcal{L}(\eta_0)$, the most important features besides its expected value are its support
and the corresponding radius.

\begin{definition}
Consider an $\R^k$-valued random variable $\zeta$. Its {\itshape support}
is the following subset of $\R^k$, which is closed with respect to the Euclidean metric:
$$\supp(\zeta):=\left\{x\in\R^k,\;\Prob\big(\zeta\in B[x,r]\big)>0\ \text{for all }r>0\right\}.$$
\end{definition}

\noindent
Observe that this definition corresponds to the standard notion of {\em spectrum of a measure} (see for example
Thm.\ 2.1 and Def.\ 2.1 in \cite{Partha}) -- applied to the distribution of a random variable.

If the initial distribution has a finite expectation, the radius can also be written as
$$R=\sup\left\{\n{\E\eta_0-x},\; x\in\supp(\eta_0)\right\},$$
as the following proposition shows.

\begin{proposition}\label{Radius}
If $\E\eta_0\in\R^k$, we have
\begin{equation}\label{rad} \inf\left\{r>0,\;\Prob\big(\eta_0\in B[\E\eta_0,r]\big)=1\right\}=
\sup\left\{\n{\E\eta_0-x},\; x\in\supp(\eta_0)\right\}.\end{equation}
\end{proposition}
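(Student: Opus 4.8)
The plan is to prove the two inclusions separately, i.e.\ to show that each side of \eqref{rad} is both $\leq$ and $\geq$ the other. Write $R_1$ for the left-hand side (the infimum over radii $r$ with $\Prob(\eta_0\in B[\E\eta_0,r])=1$) and $R_2$ for the right-hand side (the supremum of $\n{\E\eta_0-x}$ over $x\in\supp(\eta_0)$). Throughout, translating by $\E\eta_0$ we may assume $\E\eta_0=0$, so that $R_1=\inf\{r>0:\Prob(\n{\eta_0}\leq r)=1\}$ and $R_2=\sup\{\n{x}:x\in\supp(\eta_0)\}$.

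First I would show $R_2\leq R_1$. Take any $x\in\supp(\eta_0)$; by definition of the support, $\Prob(\eta_0\in B[x,\delta])>0$ for every $\delta>0$. If we had $\n{x}>R_1$, pick $\delta=\tfrac12(\n{x}-R_1)>0$; then $B[x,\delta]$ is disjoint from $B[0,R_1]$ (by the triangle inequality, any point within $\delta$ of $x$ has norm $\geq\n{x}-\delta=R_1+\delta>R_1$), so $\Prob(\eta_0\notin B[0,R_1])\geq\Prob(\eta_0\in B[x,\delta])>0$, contradicting $\Prob(\eta_0\in B[0,R_1])=1$ — note the latter holds because $\{\n{\eta_0}\leq R_1\}=\bigcap_{n}\{\n{\eta_0}\leq R_1+\tfrac1n\}$ is a decreasing intersection of probability-one events, hence has probability one. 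Therefore $\n{x}\leq R_1$ for all $x\in\supp(\eta_0)$, giving $R_2\leq R_1$.

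Next, for $R_1\leq R_2$ it suffices to show $\Prob(\n{\eta_0}\leq R_2)=1$, since then $R_1\leq R_2$ by definition of the infimum (here one should also handle the degenerate possibility $R_2=\infty$, where the inequality is trivial, and note $R_2<\infty$ is automatic when $\E\eta_0\in\R^k$ together with boundedness is not assumed — actually the statement is vacuous unless the distribution is bounded, in which case $R_2$ is finite). The key fact is the general measure-theoretic statement that a random variable lies in its support almost surely: $\Prob(\eta_0\in\supp(\eta_0))=1$. This follows because the complement $\R^k\setminus\supp(\eta_0)$ is open, hence a countable union of balls $B[x,r]$ each with $\Prob(\eta_0\in B[x,r])=0$ (using second countability / Lindel\"of property of $\R^k$), so the complement has probability zero. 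Since every point of $\supp(\eta_0)$ has norm at most $R_2$ by definition of $R_2$, we get $\Prob(\n{\eta_0}\leq R_2)\geq\Prob(\eta_0\in\supp(\eta_0))=1$, as desired. Combining both inequalities yields $R_1=R_2$.

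The only mildly delicate points are the two closure/countability arguments: that a probability-one event can be obtained as a countable decreasing intersection of the ``$R_1+\tfrac1n$'' events (straightforward by continuity of measure from above), and that the complement of the support is a countable union of null balls (which uses that $\R^k$ is second countable, so the open complement is a countable union of open balls from a fixed countable base, each contained in some closed ball of rational radius and center that is itself a null set). Neither is truly an obstacle; the main thing to be careful about is bookkeeping with open versus closed balls and strict versus non-strict inequalities when invoking the triangle inequality in the $R_2\leq R_1$ direction. This is, in essence, the standard identification of the radius of a bounded distribution with the diameter-type quantity built from its spectrum, and the proof is routine once the support-lemma $\Prob(\eta_0\in\supp(\eta_0))=1$ is in hand.
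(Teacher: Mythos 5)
Your proposal is correct and follows essentially the same route as the paper: one inequality comes from the fact that each point of $\supp(\eta_0)$ carries positive mass in every small ball around it, and the other from the fact that the distribution is concentrated on its support, which both you (via Lindel\"of/second countability) and the paper (via a bespoke countable cover by compact null sets plus a finite-subcover argument) deduce from the separability of $\R^k$. The only cosmetic difference is that you work with the closed ball of radius exactly $R_1$ (justified by continuity from above), whereas the paper argues directly with arbitrary radii on either side of the supremum.
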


\begin{proof}
First, consider a set $A$ which is compact in $(\R^k,\n{\,.\,})$ and a subset of the complement
of $\supp(\eta_0)$. The claim is that these properties of $A$ imply $\Prob(\eta_0\in A)=0$. Indeed, for every
$x\in A\subseteq (\supp(\eta_0))^\text{\upshape c}$ there exists $r_x>0$ s.t.\ $\Prob\big(\eta_0\in B[x,r_x]\big)=0$.
Let $B(y,r)$ denote the open Euclidean ball with radius $r$ around $y$, then $\{B(x,r_x),\;x\in A\}$ is an
open cover of $A$, which by compactness has a finite subcover $\{B(x_i,r_{x_i}),\;1\leq i\leq n\}$. Consequently
$$\Prob(\eta_0\in A)\leq\Prob\Big(\eta_0\in \bigcup_{i=1}^n B[x_i,r_{x_i}]\Big)=0.$$
If $r$ is greater than the supremum in (\ref{rad}) it follows that $\supp(\eta_0)\subseteq B(\E\eta_0,r)$.
Since 
$$\big(B(\E\eta_0,r)\big)^\text{\upshape c}=\Bigg(B[\E\eta_0,r+1]\setminus B(\E\eta_0,r)\Bigg)\cup
\Bigg(\bigcup_{q\in\mathbb{Q}^k\setminus B[\E\eta_0,r+1]}B[q,1]\Bigg)$$
and the right-hand side is a countable union of nullsets with respect to $\mathcal{L}(\eta_0)$, we get
$\Prob\big(\eta_0\in B[\E\eta_0,r]\big)=1$, which means that $r$ is greater or equal to the infimum in (\ref{rad}).

On the other hand, if $r$ is less than the supremum, there exists a point $x\in\supp(\eta_0)\setminus B[\E\eta_0,r]$,
which consequently has a positive distance $\delta$ to the closed ball $B[\E\eta_0,r]$. This gives
$$\Prob\big(\eta_0\in B[\E\eta_0,r]\big)\leq1-\Prob\big(\eta_0\in B[x,\tfrac{\delta}{2}]\big)<1.$$
In other words, $r$ does not appear in the set the infimum is taken over. Putting both arguments together
proves (\ref{rad}).
\end{proof}

\begin{definition}\label{Dtheta}
\begin{enumerate}[(i)]
\item For a finite graph $G=(V,E)$ and an edge $e=\langle u,v\rangle\in E$ let the update described in
(\ref{dynamics}), considered as a deterministic map on the set of $\R^k$-valued profiles, be denoted by $T_e^\theta$.
So if $T_e^\theta$ is applied to $\xi=\{\xi(v)\}_{v\in V}$ it just means that all values stay unchanged with the
only exception of
\begin{equation}\label{T_e}\left(\begin{array}{c}T_e^\theta\xi(u)\\T_e^\theta\xi(v)\end{array}\right)
=\left(\begin{array}{c}(1-\mu)\,\xi(u)+\mu\,\xi(v)\\\mu\,\xi(u)+(1-\mu)\,\xi(v)\end{array}\right)\quad\text{if } \n{\xi(u)-\xi(v)}\leq\theta.\end{equation}
\item Consider a finite section $\{1,\dots,n\}$ of the line graph, a finite
sequence $(e_i)_{i=1}^N$ of edges $e_i\in\{\langle1,2\rangle,\dots,\langle n-1,n\rangle\}$ and some values
$x_1,\dots,x_n$ in $\supp(\eta_0)$. Such a triple will from now on be called a {\itshape finite configuration}.\\
To {\em update the configuration} (with respect to $\theta$) will mean that we take
$x_1,\dots,x_n$ as initial opinions, i.e.\ we set $\eta_0(v)=x_v$ for all $v\in\{1,\dots,n\}$, and then apply
$T_{e_N}^\theta\circ T_{e_{N-1}}^\theta\circ\ldots\circ T_{e_1}^\theta$ to $\{\eta_0(v)\}_{v\in \{1,\dots,n\}}$.

Slightly abusing the notation, let the outcome, i.e.\ the final opinion values
$\{T_{e_N}^\theta\circ\ldots\circ T_{e_1}^\theta\,\eta_0(v)\}_{v\in \{1,\dots,n\}}$, be denoted by
$\{\eta_N(1),\dots,\eta_N(n)\}$.

\item Let $\nu$ denote the initial distribution $\mathcal{L}(\eta_0)$.
For $\theta>0$, let $\D_\theta(\nu)$ denote the set of vectors in $\R^k$ which the opinion values of
finite configurations can collectively approach, if updated according to confidence bound $\theta$.
More precisely, $x\in\D_\theta(\nu)$ if and only if for all $r>0$, there exist some $n\in\N=\{1,2,\dots\}$,
$x_1,\dots,x_n\in\supp(\eta_0)$ and $(e_i)_{i=1}^N$ as above, such that updating the configuration with respect to
$\theta$ yields $\eta_N(v)\in B[x,r]$ for all $v\in\{1,\dots,n\}$.
\end{enumerate}
\end{definition}

\noindent 
It is worth emphasizing that finite configurations are supposed to mimick the dynamics of the Deffuant model,
interpreting $(e_i)_{i=1}^N$ as the locations of the first $N$ Poisson events
on the edges $\langle0,1\rangle,\langle1,2\rangle,\dots,\langle n-1,n\rangle,\langle n,n+1\rangle$ in (strict)
chronological order. In this respect, considering $\theta$, we can choose the sequence $(e_i)_{i=1}^N$ such that
only Poisson events causing an actual update are considered by simply eliminating all events on edges where the
opinions of the two vertices are more than $\theta$ apart.

Note that according to the definition, $\D_\theta(\nu)$ depends on $\supp(\eta_0)$ and $\theta$,
as well as $\mu$, the latter being less obvious. See Example \ref{mu} below for an instance where $\mu$ actually
makes a difference. Let us now turn to various properties of the set $\D_\theta(\nu)$.

\begin{lemma}\label{properties}
Fix the distribution $\nu$ of $\eta_0$ and let $\D_\theta(\nu)$ and $R$ be defined as above.
\begin{enumerate}[(a)]
  \item $\D_\theta(\nu)$ is closed and increases with $\theta$.
  \item $\supp(\eta_0)\subseteq \D_\theta(\nu)\subseteq \overline{\conv(\supp(\eta_0))}\subseteq B[\E\eta_0,R]$ for all
        $\theta>0$, where $\conv(A)$ denotes the convex hull, $\overline{A}$ the closure of a set $A$.
\end{enumerate}
\end{lemma}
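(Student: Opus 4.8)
The plan is to treat (a) and (b) separately; both reduce to bookkeeping once two structural facts are isolated. First, an update $T_e^\theta$ either does nothing or replaces the two involved opinions by convex combinations (with weights $\mu,1-\mu$) of their current values. Second, an \emph{inactive} update --- one where the two opinions are more than $\theta$ apart --- acts as the identity on the profile, so it can be deleted from an edge sequence without changing anything downstream. Everything in the lemma follows from these together with the convexity and closedness of Euclidean balls and of $\overline{\conv(\supp(\eta_0))}$, plus Proposition \ref{Radius}.

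For the closedness in (a), write $S:=\D_\theta(\nu)$ and suppose $x$ lies in the Euclidean closure of $S$. Given $r>0$, pick $y\in S$ with $\n{x-y}<r/2$; applying the definition of $\D_\theta(\nu)$ to $y$ with radius $r/2$ yields a finite configuration whose updated profile satisfies $\eta_N(v)\in B[y,r/2]\subseteq B[x,r]$ for every $v$. As $r>0$ was arbitrary, $x\in S$. For the monotonicity, let $\theta_1\le\theta_2$ and $x\in\D_{\theta_1}(\nu)$; fix $r>0$ and take a finite configuration witnessing $x\in\D_{\theta_1}(\nu)$ for radius $r$. As remarked after Definition \ref{Dtheta}, we may delete from the edge sequence every update that is inactive under $\theta_1$, since each such update acts as the identity and pruning it reproduces the original $\theta_1$-trajectory step by step; so we may assume every step of the $\theta_1$-update is active, i.e.\ the two opinions involved are at distance $\le\theta_1$. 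The very same configuration, updated with respect to $\theta_2$, then performs exactly the same updates --- each distance $\le\theta_1\le\theta_2$ is still within the confidence bound --- and hence has the same final profile, with $\eta_N(v)\in B[x,r]$ for all $v$. Thus $x\in\D_{\theta_2}(\nu)$.

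For (b) we handle the three inclusions in turn. The inclusion $\supp(\eta_0)\subseteq\D_\theta(\nu)$ is immediate: for $x\in\supp(\eta_0)$ use the configuration on $\{1,2\}$ with $x_1=x_2=x$ (and any edge sequence), where no update changes anything, so $\eta_N(v)=x\in B[x,r]$ for all $r>0$. For the second inclusion set $C:=\overline{\conv(\supp(\eta_0))}$, which is closed and convex (the closure of a convex set is convex). Every opinion in a finite configuration starts in $\supp(\eta_0)\subseteq C$, and since each $T_e^\theta$ either leaves values unchanged or replaces two of them by convex combinations of values already in $C$, an induction on the number of updates gives $\eta_N(v)\in C$ for all $v$. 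Hence if $x\in\D_\theta(\nu)$, then for every $r>0$ the ball $B[x,r]$ meets $C$, so the distance from $x$ to the closed set $C$ is $0$ and $x\in C$. Finally, if $\E\eta_0\in\R^k$ and $R<\infty$, continuity from above of the probability measure $\mathcal{L}(\eta_0)$ gives $\Prob\big(\eta_0\in B[\E\eta_0,R]\big)=\lim_n\Prob\big(\eta_0\in B[\E\eta_0,R+\tfrac1n]\big)=1$, since each $R+\tfrac1n>R$; therefore no point outside the closed ball $B[\E\eta_0,R]$ lies in $\supp(\eta_0)$, so $\supp(\eta_0)\subseteq B[\E\eta_0,R]$, and as this ball is closed and convex, $C\subseteq B[\E\eta_0,R]$. (If $R=\infty$ the ball is all of $\R^k$; alternatively this last step follows directly from Proposition \ref{Radius}.)

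The only step needing genuine care is the monotonicity in (a): one has to be sure that after pruning the inactive $\theta_1$-updates the resulting sequence reproduces the original $\theta_1$-trajectory exactly, so that every surviving update is still active under $\theta_2$ --- this is precisely the remark following Definition \ref{Dtheta}, and it is the only point where the $\theta$-dependence of the dynamics is used in a nontrivial way. The rest is a direct consequence of the convexity of Euclidean balls and of $\overline{\conv(\supp(\eta_0))}$ together with the elementary properties of $\supp$ and $\E\eta_0$ already recorded above.
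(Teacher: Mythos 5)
Your proposal is correct and follows essentially the same route as the paper: the same closedness argument via nested balls, the same convexity induction for the middle inclusion, and Proposition \ref{Radius} (or the equivalent continuity-from-above computation) for the last one. The only place you add substance is the monotonicity in $\theta$, where you spell out the pruning of inactive updates that the paper dispatches with ``making $\theta$ larger only allows for more options'' and its remark after Definition \ref{Dtheta} --- a worthwhile clarification, since naively reusing the same edge sequence under a larger $\theta$ could activate previously inactive updates and change the trajectory, but not a different proof.
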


\begin{proof}
\begin{enumerate}[(a)]
  \item The first claim follows directly from the definition: For a sequence $(x_n)_{n\in\N}$ in $\D_\theta(\nu)$
    such that $\n{ x-x_n}\to 0$ and every $r>0$, there exists some $x_n \in B[x,\tfrac{r}{2}]$. Due to $x_n\in
    \D_\theta(\nu)$, there exists a finite configuration with all final opinion values in $B[x_n,\tfrac{r}{2}]$.
    But since $B[x_n,\tfrac{r}{2}]\subseteq B[x,r]$, this implies $x\in\D_\theta(\nu)$.
    
    As for the second claim, since we are free to choose the edge sequence in finite configurations, it is obvious
    that making $\theta$ larger only allows for more options when we are to come up with a setting that brings the
    opinion values collectively inside $B[x,r]$ for some given $x\in\R^k$ and $r>0$.
  \item The first inclusion is trivial, as for $x\in\supp(\eta_0)$ the finite configuration with $n=1,\ x_1=x$ will do.
    The second inclusion is due to the fact that every update of opinions is a convex combination, see (\ref{T_e}).
    Consequently, all final opinion values of finite configurations lie within $\conv(\supp(\eta_0))$. The last
    inclusion, which is meaningful only for $R<\infty$, follows from Proposition \ref{Radius} and the fact that
    $B[\E\eta_0,R]$ is both convex and closed.  
\end{enumerate}
\end{proof}

\noindent
It should be mentioned that an easy corollary to Carath\'eodory's Theorem on the convex hull states that the
convex hull of a compact set in $\R^k$ is compact as well. If $\eta_0$ has a bounded support, this implies that
the convex hull of $\supp(\eta_0)$ is actually closed, i.e.\ $\overline{\conv(\supp(\eta_0))}=\conv(\supp(\eta_0))$.

\begin{example}\label{jump}
To get familiar with the idea behind $\D_\theta(\nu)$, let us consider the discrete real-valued initial distribution
given by $\Prob(\eta_0=\tfrac1n)=\tfrac{1}{2^n}, n\in\N$. It is not hard to see that this implies
$\supp(\eta_0)=\{\tfrac1n,\; n\in\N\}\cup\{0\}$. Having the Taylor expansion of the logarithm in mind we find
$$\E\eta_0=\sum_{n=1}^\infty \frac{1}{n\,2^n}=-\left(-\sum_{n=1}^\infty \frac{(\tfrac12)^n}{n}\right)
  =-\ln(1-\tfrac12)=\ln(2).$$
By Theorem \ref{gen} we get $\theta_c=R=\ln(2)$, since $\Prob(\eta_0\in[0,1])=1$ and the largest gap in between the point
masses is $\tfrac12$.

For two point masses situated at $x$ and $y$ at distance $0<\n{ x-y}\leq\theta$, all convex combinations of
$x,y$ are in $\D_\theta(\nu)$: For $\alpha\in[0,1]$ and $r>0$, take $m,n\in\N$ s.t.\ 
$$\left|\frac{m}{m+n}-\alpha\right|\leq\frac{r}{4\,\max\{\n{x},\n{y}\}}.$$
Let us set up a finite configuration with $m+n$ vertices, $x_1=\ldots=x_m=x$ and $x_{m+1}=\ldots=x_{m+n}=y$ as well as enough
Poisson events on every edge (in an appropriate order) such that -- having updated the configuration according to the edge sequence -- the outcome $\eta_N(v)$ will
be at distance less than $\tfrac{r}{2}$ from the average $\tfrac{m}{m+n}\,x+\tfrac{n}{m+n}\,y$ for all
$v\in\{1,\dots,m+n\}$. Since all the opinion values lie in an interval of length at most $\theta$ in the beginning and hence
always will, we could choose the edge sequence by always taking the edge with largest current discrepancy next, to see
that a finite sequence with the claimed property exists. This will ensure
\begin{align*}
\n{\eta_N(v)-(\alpha x+(1-\alpha)y)}&\leq\tfrac{r}{2}+\n{(\tfrac{m}{m+n}\,x+\tfrac{n}{m+n}\,y)-(\alpha x+(1-\alpha)y)}\\
&\leq\tfrac{r}{2}+|\tfrac{m}{m+n}-\alpha|\cdot\n{x}+|\alpha-\tfrac{m}{m+n}|\cdot\n{y}\\
&\leq r,
\end{align*}
hence $\alpha x+(1-\alpha)y\in\D_\theta(\nu)$. This observation together with the fact that gaps of width
larger than $\theta$ can not be bridged leads to
$$\D_\theta(\nu)=[0,\tfrac{1}{n_\theta}]\cup\{\tfrac1n,\; n< n_\theta\},$$
where $n_\theta:=\max\{n\in\N,\;\tfrac{1}{n-1}-\tfrac{1}{n}>\theta\}$.
\end{example}

\begin{lemma}\label{circle}
\begin{enumerate}[(a)]
  \item For all $x\in\R^k$ and $0\leq \delta<\tfrac{\theta}{2}$, the set $\D_\theta(\nu)\cap B[x,\delta]$ is convex.
  \item If $R<\infty$, then $\D_{2R}(\nu)=\overline{\conv(\supp(\eta_0))}=\conv(\supp(\eta_0))$.
  \item The connected components of $\D_\theta(\nu)$ are convex and at distance at least $\theta$ from one another.
        If $\D_\theta(\nu)$ is connected, then $\D_\theta(\nu)=\overline{\conv(\supp(\eta_0))}$.
  \item If $R<\infty$ and $\nu$ has mass around its mean, i.e.\ condition (\ref{matm}) holds, then
        $\D_{\theta}(\nu)=\conv(\supp(\eta_0))$ already for $\theta>R$.
  \item For $R<\infty$, the set-valued mapping 
        $$\begin{cases}(0,\infty)\to\mathcal{B}^k\\
        \vartheta\mapsto \D_\vartheta(\nu)\end{cases}$$ is piecewise constant with only
        finitely many jumps on $[\delta,\infty)$ for all $\delta>0$.
  \item If $\D_\theta(\nu)$ is connected and $\E\eta_0$ finite, then $\E\eta_0\in\D_\theta(\nu)$
\end{enumerate}
\end{lemma}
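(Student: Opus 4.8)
The plan is to reduce the claim to the standard fact that the expectation of an $\R^k$-valued random vector with finite mean always lies in the closed convex hull of its support. Indeed, by part (c) of this very lemma the hypothesis that $\D_\theta(\nu)$ is connected already gives $\D_\theta(\nu)=\overline{\conv(\supp(\eta_0))}$, so it suffices to show $\E\eta_0\in\overline{\conv(\supp(\eta_0))}$, and then we are done.

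To establish the latter I would first record that $\Prob\big(\eta_0\in\supp(\eta_0)\big)=1$: the complement of $\supp(\eta_0)$ is open, and every one of its points is contained in some rational open ball on which $\mathcal{L}(\eta_0)$ puts no mass, so $(\supp(\eta_0))^{\text{c}}$ is a countable union of $\nu$-nullsets, exactly as in the proof of Proposition \ref{Radius} (this is where separability of $(\R^k,\n{\,.\,})$ enters). Now suppose, for contradiction, that $\E\eta_0\notin\overline{\conv(\supp(\eta_0))}$. Since that set is closed and convex, the separating hyperplane theorem provides a vector $w\in\R^k$ and a constant $c\in\R$ with $\langle w,x\rangle\leq c$ for every $x\in\supp(\eta_0)$ while $\langle w,\E\eta_0\rangle>c$. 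As $\eta_0\in\supp(\eta_0)$ almost surely we get $\langle w,\eta_0\rangle\leq c$ a.s.; finiteness of $\E\eta_0$ (hence of $\E\n{\eta_0}$) makes $\langle w,\eta_0\rangle$ integrable, and taking expectations yields $\langle w,\E\eta_0\rangle=\E\langle w,\eta_0\rangle\leq c$, a contradiction. Therefore $\E\eta_0\in\overline{\conv(\supp(\eta_0))}=\D_\theta(\nu)$.

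An alternative, more in the spirit of the flat-point machinery used elsewhere in the paper, avoids the separation argument: on the probability-one event on which $\eta_0(i)\in\supp(\eta_0)$ for all $i$ and the SLLN gives $\tfrac{1}{n}\sum_{i=1}^{n}\eta_0(i)\to\E\eta_0$, every partial average lies in $\conv(\supp(\eta_0))$, so the limit $\E\eta_0$ lies in its closure; combine with part (c) as above.

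The only ingredients that need a moment's care are the almost-sure membership $\eta_0\in\supp(\eta_0)$ and the integrability required to push the linear functional (or the SLLN) through the expectation, and both are immediate under the stated hypotheses. Beyond invoking part (c) there is really no genuine obstacle here — the statement is essentially a corollary of the structural result in (c) together with the elementary geometry of expectations.
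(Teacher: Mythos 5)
Your proposal establishes only part (f) of the six-part lemma, and it does so by invoking part (c) — which you never prove. The identity $\D_\theta(\nu)=\overline{\conv(\supp(\eta_0))}$ for connected $\D_\theta(\nu)$ is itself one of the claims to be shown, and it is by far the hardest: the paper's argument for (c) runs through part (a) (convexity of $\D_\theta(\nu)$ intersected with small balls, proved by concatenating copies of finite configurations and letting them equilibrate), then upgrades local convexity to convexity of whole components via an iterative shortening of polygonal chains inside $\D_\theta(\nu)$. Parts (b), (d) and (e) likewise require their own constructions (bridging point masses at distance $\leq 2R$, a connectedness argument through a point near the mean, and a counting bound of $(\tfrac{R+\theta}{\theta})^k$ on the number of components to control the jumps of $\vartheta\mapsto\D_\vartheta(\nu)$). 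None of this appears in your proposal, so as a proof of the stated lemma it has a large gap — and the one part you do prove is not self-contained, since its key input is among the omitted parts.

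That said, your argument for (f), granting (c), is correct and essentially the paper's own in a different guise. The paper takes the nearest point $y$ of the closed set $\D_\theta(\nu)$ to $\E\eta_0$, uses convexity to get the obtuse-angle inequality $(z-y)\cdot(x-y)\leq0$ for all $z\in\D_\theta(\nu)$ with $x=\tfrac12(\E\eta_0+y)$, and derives a sign contradiction after taking expectations; this is exactly a separating-hyperplane argument in projection form. You instead separate $\E\eta_0$ from $\overline{\conv(\supp(\eta_0))}$ directly and make explicit the step $\Prob(\eta_0\in\supp(\eta_0))=1$ (via the countable-cover argument from Proposition \ref{Radius}) that the paper uses implicitly when it writes $\E\big((\eta_0-x)\cdot(y-x)\big)>0$; your SLLN variant is also fine. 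To turn this into a proof of the lemma you would need to supply (a)–(e), in particular the finite-configuration concatenation underlying (a) and the chain-straightening argument underlying (c).
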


\begin{proof}
\begin{enumerate}[(a)]
  \item The proof of the first part of this lemma follows the idea of the above example. Let $y,z\in\D_\theta(\nu)$
        and their distance be $0<\n{ y-z}\leq 2\delta<\theta$.
        Let $\epsilon=\theta-2\delta>0$. For any $\epsilon\geq r>0$, there exist finite configurations $\chi_1$ and
        $\chi_2$ with final values in $B[y,\tfrac{r}{4}]$ and $B[z,\tfrac{r}{4}]$ respectively.
        For $\alpha\in[0,1]$ choose again $m,n\in\N$ s.t.\
        $$\left|\frac{m}{m+n}-\alpha\right|\leq\frac{r}{4\,\max\{\n{y},\n{z}\}}.$$
        We define a new finite configuration by putting $m$ copies of $\chi_1$ and $n$ copies of $\chi_2$ next
        to each other: Their finite sections of the line graph (together with the assigned initial values) will be
        concatenated blockwise -- the order among the blocks being irrelevant -- by adding an edge between two
        consecutive blocks in order to form the underlying line graph of a larger finite configuration. To get an
        edge sequence for the whole configuration we will simply string together the edge sequences of the
        individual copies, again in a blockwise manner and arbitrary order.
        
        Updating according to the edge sequence will then bring all the opinion values within distance $\theta$
        of one another. Therefore, we can bring the final outcomes arbitrarily close, say at distance at most 
        $\tfrac{r}{4}$, to the average of the initial values, let's denote it by $\overline{x}$, by just adding a
        large enough (but finite) number of Poisson events on each edge (appropriately ordered as before). From the
        properties of the chosen building blocks, $\chi_1$ and $\chi_2$, it readily follows
        that the initial average is at distance at most $\tfrac{r}{4}$ from $\tfrac{m}{m+n}\,y+\tfrac{n}{m+n}\,z$.
        This entails for every vertex $v$ of the finite configuration
        \begin{align*}
         \n{\eta_N(v)-(\alpha y+(1-\alpha)z)}&\leq\tfrac{r}{4}+\n{\overline{x}-(\alpha y+(1-\alpha)z)}\\
         &\hspace{-2cm}\leq\tfrac{r}{4}+\tfrac{r}{4}+\n{(\tfrac{m}{m+n}\,y+\tfrac{n}{m+n}\,z)-(\alpha y+(1-\alpha)z)}\\
         &\hspace{-2cm}\leq\tfrac{r}{2}+|\tfrac{m}{m+n}-\alpha|\cdot\n{y}+|\alpha-\tfrac{m}{m+n}|\cdot\n{z}\\
         &\hspace{-2cm}\leq r,
        \end{align*}
        which shows $\alpha y+(1-\alpha)z\in\D_\theta(\nu)$.
  \item By Lemma \ref{properties} it is enough to show $\D_{2R}(\nu)\supseteq\conv(\supp(\eta_0))$. Thus, letting 
        $x,y\in\supp(\eta_0)\subseteq B[\E\eta_0,R]$, we have to show that $\conv(\{x,y\})\subseteq\D_{2R}(\nu)$.
        But since $\n{x-y}$ can be at most $2R$, this is done as described in Example \ref{jump}, just the line segment
        $\conv(\{x,y\})$ plays now the role of the interval considered there.
  \item First of all, the connected components of $\D_\theta(\nu)$ are actually path-connected and moreover the pathes
        can be chosen to be polygonal chains: Assume that a connected component $C$ contains more than one path-connected
        component. Fix one such, say $C_1$. Due to connectedness of $C$, a second one $C_2$ must exist s.t.\ the
        Euclidean distance between $C_1$ and $C_2$ is $0$. But part (a) then implies that also $C_1\cup C_2$ is
        path-connected, a contradiction.
        Moreover, using the statement of part (a) we can transform any curve in $\D_\theta(\nu)$ to a polygonal
        chain which completely lies in $\D_\theta(\nu)$.
      
        Let us turn to the convexity of connected components. Fix a component $C$ of $\D_\theta(\nu)$ and $x,y\in C$,
        s.t.\ $\n{x-y}\geq\theta$, since otherwise (a) guarantees 
        $$\conv(\{x,y\})=\{\alpha x+(1-\alpha)y,\;\alpha\in[0,1]\}\subseteq C.$$
        By the above, there exists a polygonal chain in $\D_\theta(\nu)$, say
        $$l:=\begin{cases}[0,1]\to\R^k\\s\mapsto l(s)\end{cases}$$
        such that $l(0)=x,\ l(1)=y$ and $l$ is continuous and piecewise linear. 
        Let us define $x_0=x,\ x_{j+1}=l(s_j),$ where $s_j:=\max\{s\in[0,1],\;\n{x_j-l(s)}=\tfrac{\theta}{2}\}$, if
        $\n{x_j-y}\geq\theta$ and $x_{j+1}=y$ otherwise. Using (a) and these intermediate points shows that we can assume
        without loss of generality a certain sparseness of the chain, namely that its intermediate
        points $x_1,\dots,x_n$ are s.t.\ pairwise distances in $\{x=x_0,x_1,\dots,x_n,x_{n+1}=y\}$ are at least
        $\tfrac{\theta}{2}$ and hence $n\leq\tfrac{2L}{\theta}$, where $L$ denotes the length of the original chain.
        Note that the modification of the polygonal chain as just described will only decrease its length.
        \par\begingroup \rightskip18.4em
        Given a polygonal chain in $\D_\theta(\nu)$ connecting $x$ and $y$, let us assume that the minimal
        angle at an intermediate point is $\pi-2\alpha<\pi$ at $x_j$.
        Considering $B[x_j,\tfrac{\theta}{2}]$ and using (a) 
        once more, we can replace $x_j$ by the two intersection points of the ball's boundary and the chain
        $x_j^{(1)},x_j^{(2)}$ 
        and conclude that the polygonal chain through the points $x,x_1,\dots,x_{j-1},x_j^{(1)},x_j^{(2)},$\linebreak 
        $x_{j+1},\dots,x_n,y$ still lies in $\D_\theta(\nu)$ and is at least by $\theta\cdot(1-\cos(\alpha))$ shorter.
        \par\endgroup
        \vspace*{-6.3cm}
        \begin{figure}[H]     
        \flushright \includegraphics[scale=1.4]{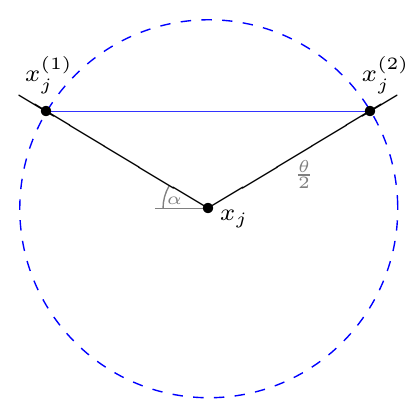}
        \end{figure}
        \vspace*{-0.5cm}
        We can then sparsify the updated chain as described above and denote the result by $l_1$.
        Iterating the whole procedure gives a sequence $(l_m)_{m\in\N}$ of shorter and shorter polygonal chains in
        $\D_\theta(\nu)$ connecting $x$ and $y$. Since the length is bounded below by $\n{x-y}$, the internal angels
        must approach $\pi$ uniformly. Let $\pi-2\alpha_1,\dots,\pi-2\alpha_n$ be the angles at $x_1,\dots,x_n$.
        An easy geometric argument yields that all points on the chain are at distance at most 
        $$\sum_{j=1}^n\tan(2\alpha_1+\dots+2\alpha_j)\,L\leq \tfrac{8nL}{\pi}\sum_{j=1}^n\alpha_j\leq
        \tfrac{16L^2}{\pi\theta}\sum_{j=1}^n\alpha_j.$$
        from the line through $x$ and $x_1$, if $\sum_{j=1}^n\alpha_j\leq\tfrac{\pi}{8}$, as $\tan(z)\leq \tfrac{4}{\pi}z$
        for all $z\in[0,\tfrac{\pi}{4}]$. This also holds for the endpoint $y$, which is why the maximal distance
        of a point on the chain to the line segment between $x$ and $y$ is bounded by
        $\tfrac{32L^2}{\pi\theta}\sum_{j=1}^n\alpha_j$.
        Let $n_m$ and $(\alpha_j^{(m)})_{j=1}^{n_m}$ correspond to $l_m$. Then
        $$\sum_{j=1}^{n_m}\alpha_j^{(m)}\leq\tfrac{2L}{\theta}\max_{1\leq j\leq n_m}\alpha_j^{(m)}
        \stackrel{m\to\infty}{\longrightarrow} 0$$
        implies that the sequence $(l_m)_{m\in\N}$ must approach the line segment between $x$ and $y$, i.e.\ 
        $\conv(\{x,y\})=\{\alpha x+(1-\alpha)y,\;\alpha\in[0,1]\}$, uniformly -- in the sense that
        $$\max_{s\in l_m}\min_{z\in\conv(\{x,y\})}\n{s-z}\to 0\quad\text{as }m\to\infty.$$
        Since $C$ being a component of $\D_\theta(\nu)$ is closed, we find 
        $\conv(\{x,y\})\subseteq C,$ which proves the convexity of C.

        Assuming that there are two points in different connected components, say $x\in C_1, y\in C_2$ s.t.\ 
        $\n{x-y}<\theta$,
        already implies (by part (a)) that $C_1\cup C_2$ is connected, as before. Finally, if $\D_\theta(\nu)$ is
        connected, what we just proved induces that it is convex. Being a closed superset of $\supp(\eta_0)$, this implies
        $$\overline{\conv(\supp(\eta_0))}\subseteq\D_\theta(\nu),$$
        which by Lemma \ref{properties} is all that needed to be shown.
  
  \item Let us now assume that $\nu$ has not only a finite radius but also mass around its mean, that is
        $\E\eta_0\in \supp(\eta_0)$. For $\theta>R$, $\D_\theta(\nu)$ is then
        connected, which by part (c) implies the claim. Indeed, let $\epsilon\in(0,\theta-R)$ and choose a point $x$
        in $B[\E\eta_0,\epsilon]\cap\supp(\eta_0)$. By the choice of $\epsilon$, all points in $B[\E\eta_0,R]$
        are at distance less than $\theta$ from $x$, which by the reasoning in part (a) and 
        $\D_\theta(\nu)\subseteq B[\E\eta_0,R]$ (see Lemma \ref{properties}) implies
        $\conv(\{x,y\})\subseteq \D_\theta(\nu)$ for all $y\in\D_\theta(\nu)$, hence the connectedness of
        $\D_\theta(\nu)$.
     
  \item The first thing to notice is that, given $R<\infty$, for all $\theta>0$ the set $\D_\theta(\nu)$ has
        finitely many connected components. Indeed, choose a point $x_i$ in each, then the open balls $B(x_i,\theta)$ must be
        disjoint by (c) and lie within $B(\E\eta_0,R+\theta)$. Consequently, there can't be more than 
        $(\tfrac{R+\theta}{\theta})^k$ of them.

        Let $C_1,\dots,C_n$ be the connected components of $\D_\delta(\nu)$, for some $\delta>0$, and 
        $d\geq\delta$ the minimal distance between them. When $\theta$ is made larger than $d$, at least two of
        the components merge. Hence there can be only $n-1$ further jumps. For $\delta\leq\theta<d$ we have
        $\D_\theta(\nu)=\D_\delta(\nu)$.
  \item Let us assume the contrary, i.e.\ $\E\eta_0\notin \D_\theta(\nu)$. As this set is closed, there
        exists some $y\in\D_\theta(\nu)$ such that the Euclidean distance from $\E\eta_0$ to $\D_\theta(\nu)$ 
        is given by $\n{\E\eta_0-y}>0$.
        
        \par\begingroup \rightskip18em 
        Choosing $x:=\tfrac12(\E\eta_0+y)$ and using the convexity of $\D_\theta(\nu)$
        -- if there existed $z\in\D_\theta(\nu)$ such that $(z-y)\cdot(x-y)>0$, $y$ would not be closest to $\E\eta_0$
        in $\D_\theta(\nu)$ --
        as well as $\supp(\eta_0)\subseteq \D_\theta(\nu)$ we find\\[0.2cm]\hspace*{0.2cm}
        $\E\big((\eta_0-x)\cdot(y-x)\big)>0$, but\\[0.1cm]\hspace*{0.75cm} $(\E\eta_0-x)\cdot(y-x)<0,$\\[0.2cm]
        a contradiction.
        \par\endgroup
\end{enumerate}
    \vspace*{-5.5cm}
    \begin{figure}[H]
     \hspace{5.6cm} \includegraphics[scale=1.3]{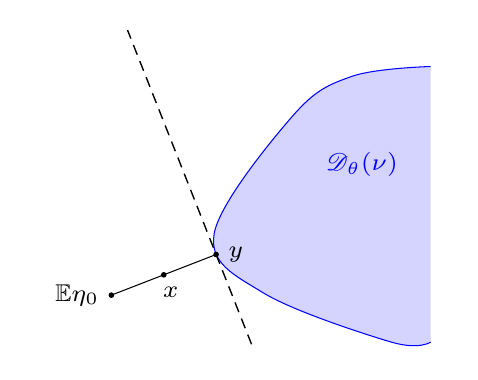}
    \end{figure}  \vspace*{-0.5cm}
\end{proof}

\begin{example}
 \begin{enumerate}[(a)] 
 \item
  To get an impression of how $\D_\theta(\nu)$ grows with $\theta$, let us consider the initial distribution
  on $\R^3$ given by $\text{\upshape unif}(\{(2,1,0),(2,-1,0),(-2,0,1),(-2,0,-1)\})$, i.e.\ featuring four point
  masses at the given vertices. It is easy to check that $\E\eta_0=(0,0,0)$ and $R=\sqrt{5}$, see Figure \ref{convex}.
     
  Since all pairwise distances are at least $2$, $\D_\theta(\nu)=\supp(\eta_0)$ for $\theta<2$.
  For $\theta\geq2$ the opinion values $(2,1,0)$ and $(2,-1,0)$ can compromise, same for
  $(-2,0,1)$ and $(-2,0,-1)$. This implies that $\D_\theta(\nu)$ contains both line segments 
  $\{(2,\alpha,0),\;\alpha\in[-1,1]\}$ and $\{(-2,0,\alpha),\;\alpha\in[-1,1]\}$.
  The latter are at distance $4$, hence we can conclude
  $$\D_\theta(\nu)=\begin{cases}
   \{(2,1,0),(2,-1,0),(-2,0,1),(-2,0,-1)\},&\!\text{for }\theta<2\\
   \{(2,\alpha,0),(-2,0,\alpha),\;\alpha\in[-1,1]\},&\!\text{for }\theta\in[2,4)\\
   \conv(\{(2,1,0),(2,-1,0),(-2,0,1),(-2,0,-1)\}),&\!\text{for }\theta>4.
   \end{cases}$$
  \begin{figure}[H]
     \centering
     \includegraphics[scale=1]{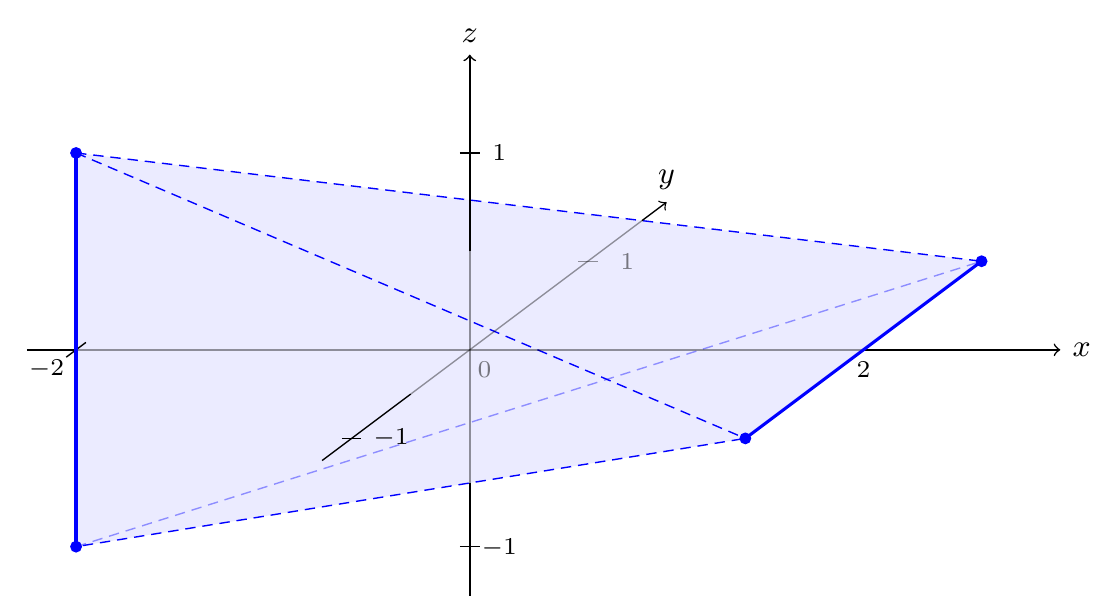}
     \caption{$\D_\theta(\nu)$ for $\eta_0$ being uniformly distributed on the set\\
     $\{(2,1,0),(2,-1,0),(-2,0,1),(-2,0,-1)\}$, evolving with growing $\theta$.}\label{convex}
  \end{figure}
  \noindent  
  For $\theta=4$ it depends on whether the values $(-2,0,0),(2,0,0)$ can be achieved or merely approximated
  by finite configurations, in other words $\mu$ (see also Example \ref{mu}). Note how $\D_\theta(\nu)$ grows
  by forming local convex hulls. 
  
  If we choose $\text{\upshape unif}(\{(0.99,1,0),(0.99,-1,0),(-0.99,0,1),(-0.99,0,-1)\})$ to be the initial
  distribution instead, we can observe a certain chain reaction effect. $\theta\geq2$ brings the point masses
  pairwise within the confidence bound as before, but this time also their convex hulls. So for this distribution
  $\nu$ we find
  $$\D_\theta(\nu)=\begin{cases}
   \supp(\eta_0),&\text{for }\theta<2\\
   \conv(\supp(\eta_0)),&\text{for }\theta\geq2.
   \end{cases}$$
 \item
  Example \ref{jump} already shows that the mapping $\vartheta\mapsto \D_\vartheta(\nu)$ can have infinitely
  (but still countably) many jumps on $(0,\infty)$. Taking the discrete initial distribution given by
  $$\Prob(\eta_0=2^n)=\tfrac{1}{3^{n}}\text{ and }\Prob(\eta_0=-2^n)=\tfrac{1}{3^{n}},\text{ for }n\in\N,$$
  shows that part (e) of Lemma \ref{circle} doesn't hold for the case $R=\infty$, i.e.\ under the
  weaker condition that $\E\eta_0$ is finite.
 \item
  Coming back to the example mentioned above, where $\eta_0\sim\text{\upshape unif}(S^{k-1})$ for some $k\geq 2$, it is
  not hard to see that $\D_\theta(\nu)=B[\mathbf{0},1]$ for all $\theta>0$. Indeed, since 
  $\supp(\eta_0)=S^{k-1}$ is connected and $\supp(\eta_0)\subseteq\D_\theta(\nu)$, it has to be contained in a
  connected component of $\D_\theta(\nu)$. All such are convex by Lemma \ref{circle}, hence 
  $\conv(S^{k-1})=B[\mathbf{0},1]\subseteq\D_\theta(\nu)$. The reverse inclusion follows directly from part (b)
  of Lemma \ref{properties}.
\end{enumerate}
\end{example}

\begin{definition}\label{suppdef}
For $\theta>0$ and $t\geq0$, let the {\em support of the distribution of $\eta_t$} be denoted by
$\supp_\theta(\eta_t)$. 
\end{definition}

\noindent
The support of $\eta_t$ evidently depends on $\theta$. However, for $t=0$ it holds that
$\supp_\theta(\eta_0)=\supp(\eta_0)$
irrespectively of $\theta$, as the dynamics of the model is not yet involved. Note that for values of $\theta$ where
$\D_\theta(\nu)$ increases, $\supp_\theta(\eta_t)$ can actually depend on $\mu$ as well, see Example \ref{mu} below.
Let us next derive properties of $\supp_\theta(\eta_t)$ similar to those of $\D_\theta(\nu)$.
\begin{lemma}\label{suppt}
 \begin{enumerate}[(a)]
  \item For $0<s<t$ we get $\supp_\theta(\eta_s)=\supp_\theta(\eta_t)$.
  \item $\supp_\theta(\eta_t)$ increases with $\theta$ and for all $\theta>0$:
        $$\supp(\eta_0)\subseteq \supp_\theta(\eta_t)\subseteq\overline{\conv(\supp(\eta_0))}\subseteq B[\E\eta_0,R].$$
 \end{enumerate}
\end{lemma}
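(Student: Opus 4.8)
The plan is to reduce the whole statement to a single-vertex analogue of the combinatorial set $\D_\theta(\nu)$. By Definition~\ref{suppdef} (and since the i.i.d.\ start makes the law of $\eta_t(v)$ the same for every $v$), $\supp_\theta(\eta_t)$ is the support of this common marginal. Call $x\in\R^k$ \emph{$\theta$-reachable} if for every $r>0$ there is a finite configuration in the sense of Definition~\ref{Dtheta}, with one distinguished vertex $v_0$, such that updating it with respect to $\theta$ yields $\eta_N(v_0)\in B[x,r]$ (the other final values being irrelevant); write $\mathscr{S}_\theta$ for the set of $\theta$-reachable points. The key claim is $\supp_\theta(\eta_t)=\mathscr{S}_\theta$ for every $t>0$. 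Since $\mathscr{S}_\theta$ carries no reference to $t$, this claim is precisely part (a), and part (b) will follow by copying the proof of Lemma~\ref{properties}(b).

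To get $\mathscr{S}_\theta\subseteq\supp_\theta(\eta_t)$ I would realise a given finite configuration --- say on the block $\{-n,\dots,n\}$ with $v_0=0$ --- inside the genuine dynamics with positive probability: prescribe the initial opinions on $\{-n,\dots,n\}$ to lie in small balls around the configuration's initial values (possible with positive probability because each such value lies in $\supp(\eta_0)$ and $\eta_0(v)\in\supp(\eta_0)$ almost surely), prescribe the two boundary edges $\langle-n-1,-n\rangle$ and $\langle n,n+1\rangle$ to carry no Poisson event during $[0,t]$, and prescribe the internal Poisson events during $[0,t]$ to fire, in order, along the configuration's edge sequence; these three events are independent and each has positive probability, and on their intersection the block is isolated throughout $[0,t]$ and $\eta_t(0)$ equals the outcome of the finite update, so $\Prob(\eta_t(0)\in B[x,r])>0$. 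For the converse $\supp_\theta(\eta_t)\subseteq\mathscr{S}_\theta$ I would use the standard fact behind the model's well-definedness (cf.\ Thm.\ 3.9 in \cite{Liggett}) that the probability of $\eta_t(0)$ being influenced by initial data outside $\{-n,\dots,n\}$ tends to $0$ as $n\to\infty$; hence for large $n$ the block-isolated process $\eta_t^{(n)}$ satisfies $\Prob(\eta_t^{(n)}(0)\in B[x,r])\ge\Prob(\eta_t(0)\in B[x,\tfrac r2])>0$. As $\eta_t^{(n)}(0)$ is a function only of the finitely many i.i.d.\ block opinions and of the (almost surely finite) \emph{order} of internal Poisson events, a Fubini argument yields a single admissible edge-order $\sigma$ for which a set of block opinions of positive $\nu^{\otimes(2n+1)}$-measure --- hence nonempty and lying in $\supp(\eta_0)^{2n+1}$ --- produces a value in $B[x,r]$ at $0$, i.e.\ a finite configuration witnessing $x\in\mathscr{S}_\theta$.

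Granting the key claim, part (b) is short. Monotonicity in $\theta$ follows by trimming: given a finite configuration for $\theta_1$, delete from its edge sequence every entry along which the update does not actually fire; replaying the trimmed sequence under $\theta_2>\theta_1$ reproduces the same state after each retained step --- such a step has endpoint distance $\le\theta_1\le\theta_2$, so it fires under $\theta_2$ with the identical outcome, and no other step occurs --- whence the same value at $v_0$, so $\mathscr{S}_{\theta_1}\subseteq\mathscr{S}_{\theta_2}$. The chain $\supp(\eta_0)\subseteq\supp_\theta(\eta_t)\subseteq\overline{\conv(\supp(\eta_0))}\subseteq B[\E\eta_0,R]$ then goes exactly as in Lemma~\ref{properties}(b): for $x\in\supp(\eta_0)$ the event $\{\eta_0(0)\in B[x,r]\}$ intersected with ``no Poisson event on $\langle-1,0\rangle$ or $\langle0,1\rangle$ during $[0,t]$'' has positive probability and forces $\eta_t(0)=\eta_0(0)$; the middle inclusion holds because every update is a convex combination (see (\ref{T_e})) and almost surely the whole initial configuration lies in $\supp(\eta_0)^\Z$; and the last inclusion is Proposition~\ref{Radius} plus convexity and closedness of $B[\E\eta_0,R]$ (and is vacuous when $R=\infty$).

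The genuinely delicate point --- and the one I expect to be the main obstacle --- is the discontinuity of the update map $T_e^\theta$ at interpoint distance exactly $\theta$: in proving $\mathscr{S}_\theta\subseteq\supp_\theta(\eta_t)$, perturbing the prescribed initial opinions into small balls need not move the update outcome continuously. The fix is to first arrange, at the cost of enlarging the finite configuration, that no update in it occurs at distance exactly $\theta$, so that a small enough neighbourhood of the prescribed values keeps the outcome in $B[x,r]$; the cases where this cannot be arranged (atoms of $\nu$ at distance exactly $\theta$ from one another) are handled by reproducing the relevant initial opinions exactly, which has positive probability. Everything else is a routine transcription of the reasoning already used for $\D_\theta(\nu)$ in Lemmas~\ref{properties} and~\ref{circle}.
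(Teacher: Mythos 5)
Your reduction of part (a) to the $t$-independent set $\mathscr{S}_\theta$ founders on exactly the delicate point you flag at the end, and the fix you sketch does not repair it. The inclusion $\mathscr{S}_\theta\subseteq\supp_\theta(\eta_t)$ is false in general: take $k=1$, $\nu=\text{unif}\big([0,\tfrac14]\cup[\tfrac34,1]\big)$ and $\theta=\tfrac12$ (the paper's own example right after Proposition \ref{supp_t}). The two-vertex configuration $x_1=\tfrac14$, $x_2=\tfrac34$ with a single update fires because $|x_1-x_2|=\theta$ exactly, so $\tfrac14+\tfrac{\mu}{2}\in\mathscr{S}_{1/2}$; yet $\supp_{1/2}(\eta_t)=[0,\tfrac14]\cup[\tfrac34,1]$, since in the actual dynamics two neighbours sit at distance exactly $\tfrac12$ with probability zero. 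Neither branch of your fix applies here: $\tfrac14$ and $\tfrac34$ are support points but not atoms, so they cannot be ``reproduced exactly'' with positive probability, and no enlargement of the configuration bridges the gap using only updates at distance strictly below $\theta$. Hence the key claim $\supp_\theta(\eta_t)=\mathscr{S}_\theta$ fails precisely at the jump values of $\vartheta\mapsto\D_\vartheta(\nu)$, and with it your proof of part (a); this is also why the paper proves $\supp_\theta(\eta_t)=\D_\theta(\nu)$ only under a no-jump hypothesis.

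The paper avoids any $t$-independent reachability set. Part (a) is proved by a direct manipulation of the Poisson marks: with positive probability all events that occur up to time $t$ between two shielding edges already occur, in the same order, before time $s$, and the marks are independent of the initial data, so $\Prob(\eta_t(0)\in B[x,r])>0$ forces $\Prob(\eta_s(0)\in B[x,r])>0$. Monotonicity in $\theta$ is proved by partitioning the event $\{\eta_t(0)\in B[x,r]\}$ according to the realized sequence of Poisson events and the sub-sequence of those blocked by the confidence bound, then deleting the blocked ones --- your trimming idea, but applied to the random dynamics rather than to an idealized finite configuration, which is what dodges the exact-distance-$\theta$ issue. Your arguments for $\supp_\theta(\eta_t)\subseteq\mathscr{S}_\theta$, for the trimming monotonicity of $\mathscr{S}_\theta$ itself, and for the chain of inclusions in (b) are sound; what is missing is either a correct $t$-independent description of $\supp_\theta(\eta_t)$ or a direct argument for (a) along the paper's lines.
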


\begin{proof}
 \begin{enumerate}[(a)]
  \item $\supp_\theta(\eta_s)\subseteq\supp_\theta(\eta_t)$ readily follows from the fact, that for every set $A$
        $\Prob(\eta_s(v)\in A)>0$ implies $\Prob(\eta_t(v)\in A)>0$, since with positive probability
        there won't be any Poisson events on the edges $\langle v-1,v\rangle$ and $\langle v,v+1\rangle$ in the time
        interval $[s,t]$ forcing $\eta_s(v)=\eta_t(v)$.
        
        But the reverse inclusion is also true. To see this we will locally modify the configuration:
        $x\in\supp_\theta(\eta_t)$ if and only if for all $r>0$, there exists
        some $n\in\N$ such that the event that $\eta_t(0)\in B[x,r]$ and at least one of the edges $\langle -n,-n+1\rangle,
        \dots,\langle -1,0\rangle$ and $\langle 0,1\rangle,\dots,\langle n-1,n\rangle$ respectively, has not experienced
        any Poisson event up to time $t$ has positive probability. That the Poisson events occurring on
        $\langle -n,-n+1\rangle,\dots,\langle n-1,n\rangle$ up to $t$ already occur in the same order up to time $s$
        (and no further events) has positive probability. Due to the fact that the Poisson events are independent of
        the starting configuration, such a modification of the interactions shows $\Prob(\eta_s(0)\in B[x,r])>0$.
  \item To prove the monotonicity in $\theta$, we will dissect the event described in part (a) a little more closely.
        For $x\in\supp_\vartheta(\eta_t)$ and $r>0$, let us consider the event that $\eta_t(0)\in B[x,r]$ and at least one
        of the edges between $-n$ and $0$ as well as between $0$ and $n$ has not experienced any Poisson event up to time
        $t$. For sufficiently large $n$ this has positive probability as mentioned before. Fix $n$ to be large enough in this
        respect and denote the corresponding event by $A$.
        
        Let again $(e_i)_{i=1}^N$ encode the chronologically ordered locations of the random but finite number of
        Poisson events occurring up to time $t$ on the edge set $\langle -n,-n+1\rangle,\dots,\langle n-1,n\rangle$.
        Further, let $(e_{i_j})_{j=1}^{N'}$ be the subsequence of $(e_i)_{i=1}^N$ which contains only those edges on which a
        difference exceeding the confidence bound prevented the occurring Poisson event from invoking an actual update
        of opinions.
        Since there are only finitely many choices for the sequence $(e_i)_{i=1}^N$ and its corresponding subsequence, if
        $N\in\N$ is fixed, and $N$ is a.s.\ finite, we can partition the event $A$ into $\{A_m,\;m\in\N\}$ according to the
        different choices of $(e_i)$ and $(e_{i_j})$. Note that for the subsequences to be considered equal not only
        their length and ordered elements must coincide, but also the set of indices $\{i_j,\;1\leq j\leq N'\}$ has to
        be identical. From $\Prob(A)>0$ we can conclude that there must be some $A_m$ which has positive probability.
        In other words, there exists a set $C\subseteq(\R^k)^{2n-1}$ s.t.\ 
        $$\Prob\big((\eta_0(v))_{v=-n+1}^{n-1}\in C\big)>0$$
        and given a starting configuration in $C$, Poisson events on the edges given by the fixed sequence $(e_i)_{i=1}^N$
        corresponding to $A_m$ will ensure, in the Deffuant model with confidence bound $\vartheta$, that the final value
        at $0$ is in $B[x,r]$.
        
        Let $B$ be the event that the locations of all Poisson events on the edge set
        $\{\langle -n,-n+1\rangle,\dots,\langle n-1,n\rangle\}$ up to $t$ are given by the subsequence of $(e_i)_{i=1}^N$
        which is obtained by removing the elements of $(e_{i_j})$.
        Given $B$ and $\{(\eta_0(v))_{v=-n+1}^{n-1}\in C\}$, the dynamics of the Deffuant model with confidence
        bounds $\vartheta$ and $\theta\geq\vartheta$ respectively will coincide up to time $t$ between the two edges
        without Poisson events shielding $0$ from $-n$ and $n$. Since $B$ has positive probability and the Poisson
        events are independent of $\{(\eta_0(v))_{v=-n+1}^{n-1}\in C\}$ this implies that $x\in\supp_\vartheta(\eta_t)$
        forces $x\in\supp_{\theta}(\eta_t)$ for all $\theta\geq\vartheta$, hence the claimed monotonicity.
               
        When it comes to the second statement, the first inclusion was actually proved in (a) as the argument
        used in order to show $\supp_\theta(\eta_s)\subseteq\supp_\theta(\eta_t)$ is also valid for $s=0$.
        The second and third inclusion can be verified as in part (b) of Lemma \ref{properties}.
 \end{enumerate}
\end{proof}

\noindent
The following proposition reveals how the set $\D_\theta(\nu)$ comes into play in the analysis of the
long-term behavior of the Deffuant model.
\begin{proposition}\label{supp_t}
   If $\vartheta\mapsto \D_\vartheta(\nu)$ has no jump in $[\theta-\epsilon,\theta+\epsilon]$ for fixed
   $\theta$ and some $\epsilon>0$, the following equality holds true for all $t>0$:
   $$\supp_{\theta}(\eta_t)=\D_\theta(\nu).$$
\end{proposition}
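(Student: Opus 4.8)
The plan is to prove the two inclusions $\supp_\theta(\eta_t)\subseteq\D_\theta(\nu)$ and $\D_\theta(\nu)\subseteq\supp_\theta(\eta_t)$ separately, using the local-modification philosophy already employed in Lemma \ref{suppt}. Fix $t>0$ and a point $x$.

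For the inclusion $\supp_\theta(\eta_t)\subseteq\D_\theta(\nu)$, suppose $x\in\supp_\theta(\eta_t)$ and fix $r>0$. As in part (a) of Lemma \ref{suppt}, there exists $n\in\N$ such that, with positive probability, $\eta_t(0)\in B[x,r]$ while at least one edge on each side among $\langle-n,-n+1\rangle,\dots,\langle-1,0\rangle$ and $\langle0,1\rangle,\dots,\langle n-1,n\rangle$ has seen no Poisson event up to time $t$; this shields the value at $0$ from the rest of $\Z$, so $\eta_t(0)$ depends only on the finitely many initial opinions $\eta_0(-n+1),\dots,\eta_0(n-1)$ and on the (a.s.\ finite) sequence of Poisson events on the interior edges. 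Conditioning on a fixed such ordered edge sequence $(e_i)_{i=1}^N$ having positive probability — and discarding, as in Definition \ref{Dtheta}, those events that caused no update — and on the starting configuration lying in a set $C$ of positive measure, all points of $C$ lie in $\supp(\eta_0)^{2n-1}$, so we obtain a genuine finite configuration whose update with respect to $\theta$ produces value in $B[x,r]$ at site $0$. To land in $\D_\theta(\nu)$ we need \emph{all} final opinion values inside a small ball, not just the one at $0$; this is where I would append, to the right of the shielded block, a long run of extra Poisson events on every interior edge (in a suitable order) that drives the whole block toward its common average, which by unimodality/convexity of updates is within $B[x,r']$ once $r'$ is adjusted. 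Letting $r\to0$ gives $x\in\D_\theta(\nu)$. Note this direction does not use the no-jump hypothesis.

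For the reverse inclusion $\D_\theta(\nu)\subseteq\supp_\theta(\eta_t)$, take $x\in\D_\theta(\nu)$ and $r>0$, so there is a finite configuration on $\{1,\dots,n\}$ with initial values $x_1,\dots,x_n\in\supp(\eta_0)$ and an edge sequence $(e_i)_{i=1}^N$ whose update with respect to $\theta$ brings every site into $B[x,r/2]$. I would realize this inside the Deffuant process: with positive probability the configuration $(\eta_0(1),\dots,\eta_0(n))$ is close enough to $(x_1,\dots,x_n)$ (using that each $x_v$ is in the support, hence $\Prob(\eta_0(v)\in B[x_v,\delta])>0$ for every $\delta$), and with positive probability the Poisson events up to time $t$ on the edges $\langle1,2\rangle,\dots,\langle n-1,n\rangle$ occur exactly in the order $(e_i)_{i=1}^N$ with no further events and none on $\langle0,1\rangle,\langle n,n+1\rangle$. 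The delicate point — and the reason the no-jump hypothesis enters — is that the real initial opinions are only \emph{near} the $x_v$'s, and with confidence bound exactly $\theta$ an update that was performed in the ideal configuration (distance $\le\theta$) might be blocked in the perturbed one (distance slightly above $\theta$), or vice versa. This is precisely why we assume $\vartheta\mapsto\D_\vartheta(\nu)$ has no jump on $[\theta-\epsilon,\theta+\epsilon]$: by Lemma \ref{circle}(e) the map is piecewise constant with finitely many jumps, so $\D_{\theta-\epsilon}(\nu)=\D_{\theta+\epsilon}(\nu)=\D_\theta(\nu)$, and one may instead realize the given point as a limit of finite configurations updated with the slightly smaller bound $\theta-\epsilon$ while performing the Deffuant updates with the true bound $\theta$; a distance $\le\theta-\epsilon$ in the ideal run is robust under perturbations of size $<\epsilon/2$, so every ideal update is still carried out and no spurious ones appear. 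By continuity of the finitely many update maps $T_{e_i}^\theta$ applied on the region where no distance is near $\theta$, the resulting $\eta_t(0)$ lies in $B[x,r]$ with positive probability, giving $x\in\supp_\theta(\eta_t)$.

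I expect the main obstacle to be exactly this robustness argument under the exact-threshold dynamics: making precise that, after replacing $\theta$ by $\theta-\epsilon$ in the defining finite configuration, all the distances that are compared to the confidence bound during the $N$ updates stay uniformly bounded away from $\theta$ (say by $\epsilon/2$), so that a sufficiently small perturbation of the initial data neither creates nor destroys any update. Once that uniform gap is extracted — which is a finite, deterministic statement about one fixed finite configuration — everything else is the routine positive-probability bookkeeping already rehearsed in Lemma \ref{suppt}, combined with Lemma \ref{circle}(e) to justify the passage from $\theta$ to $\theta-\epsilon$.
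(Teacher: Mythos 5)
Your treatment of the inclusion $\D_\theta(\nu)\subseteq\supp_\theta(\eta_t)$ is essentially the paper's argument: pass to $\D_{\theta-\epsilon}(\nu)=\D_\theta(\nu)$ via the no-jump hypothesis and Lemma \ref{circle}(e), realize the finite configuration with perturbed initial data and the prescribed Poisson events, and note that the coupling survives because every event in the (reduced) edge sequence causes an update at ideal distance $\leq\theta-\epsilon$, hence still at distance $\leq\theta$ after perturbations of size $\epsilon/2$, while no spurious updates can occur since there are no other Poisson events on the shielded block. The ``uniform gap'' you flag as the main obstacle is automatic once the edge sequence is pruned to update-causing events (as licensed in the remark after Definition \ref{Dtheta}), so that half of your proof is sound.

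The genuine gap is in the other inclusion, $\supp_\theta(\eta_t)\subseteq\D_\theta(\nu)$. Membership in $\D_\theta(\nu)$ requires a finite configuration \emph{all} of whose final values lie in $B[x,r]$, whereas your shielding argument only produces one whose value \emph{at site $0$} lies in $B[x,r]$. Your proposed repair --- appending extra Poisson events to drive the whole block to its common average --- cannot work: the dynamics preserves the average of the block, so that average equals the average of the initial values $x_1,\dots,x_n$, which has nothing to do with $x$; driving the block to it would move the value at site $0$ \emph{away} from $B[x,r]$, and if the block splits into components more than $\theta$ apart there is no common value to approach at all. The set of individually reachable values is a priori larger than $\D_\theta(\nu)$, and closing this gap is exactly what Lemma \ref{circle}(a) is for in the paper's proof: since two opinions can only be updated when they are at distance $\leq\theta<\theta+\epsilon$, part (a) shows that $\D_{\theta+\epsilon}(\nu)$ is stable under each update map $T_e^\theta$, so by induction from $\supp(\eta_0)\subseteq\D_{\theta+\epsilon}(\nu)$ every value $\eta_t(v)$ lies \emph{deterministically} in the closed set $\D_{\theta+\epsilon}(\nu)$, whence $\supp_\theta(\eta_t)\subseteq\D_{\theta+\epsilon}(\nu)=\D_\theta(\nu)$. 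Note that this is also where the right half $[\theta,\theta+\epsilon]$ of the no-jump interval is used, so your claim that this direction needs no continuity assumption is unsupported: in general one only gets $\supp_\theta(\eta_t)\subseteq\bigcap_{\vartheta>\theta}\D_\vartheta(\nu)$, and Example \ref{mu} shows that $\vartheta\mapsto\D_\vartheta(\nu)$ need not be right-continuous at a jump.
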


\begin{proof}
Before proving this result, we want to mention that given $R<\infty$, the continuity assumption can be
weakened: If $R<\infty$ and $\vartheta\mapsto\D_\vartheta(\nu)$ has no jump at $\theta$, part (e)
of Lemma \ref{circle}, already implies that $\D_\vartheta(\nu)$ is constant on an interval $[\theta-\epsilon,\theta+\epsilon]$ for suitably small $\epsilon>0$. 

Let us first focus on the inclusion $\supp_{\theta}(\eta_t)\supseteq\D_\theta(\nu)$. For every fixed
$x$ in $\D_\theta(\nu)=\D_{\theta-\epsilon}(\nu)$ and all $r>0$, there exists a finite
configuration with $n\in\N$, $x_1,\dots,x_n\in\supp(\eta_0)$ and edge sequence $(e_i)_{i=1}^N$, s.t.\
updating the configuration with respect to the confidence bound $\theta-\epsilon$ yields $\eta_N(v)\in B[x,r]$ for
all $v\in\{1,\dots,n\}$. Let further $t>0$ be fixed.
Due to $x_v\in\supp(\eta_0)$, we get $\Prob(\eta_0\in B[x_v,\epsilon])>0$.

Consequently, in the Deffuant model on $\Z$ the following event has positive probability:
$\eta_0(v)\in B[x_v,\epsilon]$ for all $v\in\{1,\dots,n\}$, up to time $t$ Poisson events have occurred
on neither $\langle0,1\rangle$ nor $\langle n,n+1\rangle$ and the locations of the events on $\langle1,2\rangle,\dots,\langle n-1,n\rangle$ are chronologically ordered given by $(e_i)_{i=1}^N$.
Note that every Poisson event
which leads to an update in the given finite configuration does the same in this configuration of the whole
model with respect to parameter $\theta$, as the margins coming from slightly altered initial values are
convex combinations of the initial margins $\eta_0(v)-x_v$ and thus always bounded by $\epsilon$.
This shows $\Prob(\eta_t(1)\in B[x,r+\epsilon])>0$, hence $x\in \supp_{\theta}(\eta_t)$.

When it comes to the reverse inclusion, consider again the Deffuant model with confidence bound
$\theta$. By definition, $x\in\supp_{\theta}(\eta_t)$ if and only if for all $r>0:$ 
$\Prob(\eta_t(v)\in B[x,r])>0$. But every such value $\eta_t(v)$ is formed by
(finitely many) convex combinations starting from a finite collection of initial values $\{\eta_0(u)\}_{u=v-k}^{v+l}$.
Part (a) of Lemma \ref{circle} shows that $\eta_{s-}(u),\eta_{s-}(v)\in\D_{\theta+\epsilon}(\nu)$ implies 
$\eta_{s}(u),\eta_{s}(v)\in\D_{\theta+\epsilon}(\nu)$ after an update along the edge $\langle u,v\rangle$
at time $s$, since this can only occur if the former are at distance less than or equal to $\theta$. 
Thus, due to $\{\eta_0(u)\}_{u=v-k}^{v+l}\subseteq\supp(\eta_0)\subseteq\D_{\theta+\epsilon}(\nu)$, an inductive
argument verifies $\eta_t(v)\in\D_{\theta+\epsilon}(\nu)$ and hence
$$\supp_{\theta}(\eta_t)\subseteq\overline{\D_{\theta+\epsilon}(\nu)}=\D_{\theta+\epsilon}(\nu)=\D_\theta(\nu).
\vspace{-1.5em}$$
\end{proof}
\\[1em]\noindent
Note that if $\vartheta\mapsto \D_\vartheta(\nu)$ has a jump at $\theta$, the subtle issue with critical
compromises, as considered in Proposition \ref{crit}, reappears. To make this point clear, let us consider
the initial distribution $\nu=\text{\upshape unif}(\{\tfrac14,\tfrac34\})$, for which we find
$$\D_{\tfrac12}(\nu)=\supp_{\tfrac12}(\eta_t)=[\tfrac14,\tfrac34].$$
Taking $\eta_0\sim\text{\upshape unif}\big([0,\tfrac14]\cup[\tfrac34,1]\big)$ instead yields
$$[0,1]=\D_{\tfrac12}(\nu)\supsetneq\supp_{\tfrac12}(\eta_t)=[0,\tfrac14]\cup[\tfrac34,1].$$

\begin{definition}\label{gap}
Given an initial distribution $\mathcal{L}(\eta_0)=\nu$, define the length of the {\em largest gap} in its
support as
$$h:=\inf\{\theta>0,\;\D_\theta(\nu)\text{ is connected}\}.$$
\end{definition}
Following this definition we get $h=0$ for $\nu=\text{\upshape unif}(S^{k-1})$ and $k\geq2$,
but $h=2$ for $\nu=\text{\upshape unif}(S^0)$. Considering the other two distributions appearing
in the above example, we observe that $\text{\upshape unif}(\{(2,1,0),(2,-1,0),(-2,0,1),(-2,0,-1)\})$
has $h=4$ and $\text{\upshape unif}(\{(0.99,1,0),(0.99,-1,0),(-0.99,0,1),(-0.99,0,-1)\})$ instead $h=2$.
In addition, parts (b) and (d) of Lemma \ref{circle} tell us that $h\leq 2R$ if $R$ is finite and $h\leq R$
if additionally $\E\eta_0\in\supp(\eta_0)$.

Having generalized the notion of a gap in a distribution on $\R$ to higher dimensions finally allows us to formulate
and prove a result corresponding to the cases of Theorem \ref{gen} that were omitted by Theorem \ref{nogap}.

\begin{theorem}\label{gapsEucl}
Consider the Deffuant model on $\Z$ with an initial distribution on $(\R^k,\n{\,.\,})$ that is
bounded, i.e.\
$$R=\inf\left\{r>0,\;\Prob\big(\eta_0\in B[\E\eta_0,r]\big)=1\right\}<\infty,$$
and $h$ being the length of the largest gap in its support.
Then the critical value for the confidence bound, where a phase transition from a.s.\ no consensus
to a.s.\ strong consensus takes place is $\theta_\text{\upshape c}=\max\{R,h\}$.
\end{theorem}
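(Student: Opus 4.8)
The plan is to prove the two implications $\theta>\max\{R,h\}\Rightarrow$ a.s.\ strong consensus and $\theta<\max\{R,h\}\Rightarrow$ a.s.\ no consensus, and in both directions the main device is to reduce to the machinery already set up for the case ``mass around the mean'' (Theorem \ref{nogap}(a) and Section \ref{1d}) by examining the configuration not at time $0$ but at an arbitrary fixed time $s>0$. The preliminary observation is structural: when $\theta>h$ the set $\D_\theta(\nu)$ is connected, so Lemma \ref{circle}(c) gives $\D_\theta(\nu)=\overline{\conv(\supp(\eta_0))}$, Lemma \ref{circle}(f) gives $\E\eta_0\in\D_\theta(\nu)$, and in fact $\D_\vartheta(\nu)$ is constant for all $\vartheta>h$, so there is no jump of $\vartheta\mapsto\D_\vartheta(\nu)$ near $\theta$ and Proposition \ref{supp_t} applies, yielding $\supp_\theta(\eta_t)=\D_\theta(\nu)=\overline{\conv(\supp(\eta_0))}$ for every $t>0$. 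Consequently the distribution of $\eta_s$ has $\E\eta_s=\E\eta_0$ (the dynamics is average-preserving and the model is symmetric under reflections of $\Z$, so $\E\eta_t(v)$ does not depend on $t$), it has $\E\eta_0$ in its support, i.e.\ it satisfies condition (\ref{matm}), and its radius around $\E\eta_0$ equals $R$ (the supremum of $x\mapsto\n{\E\eta_0-x}$ over $\overline{\conv(\supp(\eta_0))}$ equals that over $\supp(\eta_0)$, which is $R$ by Proposition \ref{Radius}). Moreover $\{\eta_s(v)\}_{v\in\Z}$ is stationary and ergodic, being a shift-commuting factor of the i.i.d.\ initial sequence and the Poisson clocks.

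\emph{Supercritical case, $\theta>\max\{R,h\}$.} Here $\theta>R=$ radius of $\eta_s$, and $\eta_s$ satisfies (\ref{matm}); restarting the Deffuant evolution at time $s$ one runs the supercritical part of the proof of Theorem \ref{nogap}(a) essentially verbatim, with $\{\eta_s(v)\}$ in the role of the initial configuration. By the ergodic theorem the long running averages of $\{\eta_s(v)\}$ lie near $\E\eta_0$; condition (\ref{matm}) permits a local modification producing, with positive probability, vertices that are two-sidedly $\epsilon$-flat with respect to $\{\eta_s(v)\}$ and $\E\eta_0$; ergodicity makes infinitely many such vertices appear a.s.; the SAD representation (the opinion at time $t>s$ is a unimodal weighted average of $\{\eta_s(u)\}_u$) confines the opinion at such a vertex to $B[\E\eta_0,6\epsilon]$ forever; the energy/conservation argument forces such a vertex and its neighbors to finally concur; and since one may take $7\epsilon<\theta-R$ the conclusion propagates inductively, so all opinions converge to $\E\eta_s=\E\eta_0$, which is strong consensus.

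\emph{Subcritical case, $\theta<\max\{R,h\}$.} Split according to which of $R,h$ dominates. If $\theta<h$, then $\D_\theta(\nu)$ is disconnected; as $R<\infty$ its components $C_1,\dots,C_p$ ($p\ge2$) are compact and convex, and — refining Lemma \ref{circle}(c) by compactness and the fact that two opinions at distance $\le\theta$ could interact, placing a point of the open segment joining them into the closed set $\D_\theta(\nu)$ — pairwise at distance \emph{strictly} greater than $\theta$. Since every value $\eta_t(v)$ is obtained from finitely many initial values by $\theta$-updates we have $\eta_t(v)\in\D_\theta(\nu)$ for all $t$, and once $\eta_t(v)$ lies in a component $C_i$ it stays there forever (leaving would require an interaction with an opinion in another component, impossible at distance $>\theta$). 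As $\supp(\eta_0)$ must meet at least two components — otherwise $\D_\theta(\nu)\subseteq\conv(\supp(\eta_0))$ would lie inside one of them — pick $x_1\in\supp(\eta_0)\cap C_1$ and $x_2\in\supp(\eta_0)\cap C_2$; with positive probability $\eta_0(0)$ is close enough to $x_1$ to lie in $C_1$ and $\eta_0(1)$ close enough to $x_2$ to lie in $C_2$, whence $\langle0,1\rangle$ is blocked forever, and ergodicity gives infinitely many blocked edges a.s. If instead $h\le\theta<R$, we pass to time $s$ again: $\eta_s$ satisfies (\ref{matm}) and has radius $R>\theta$, so there is a point of $\supp_\theta(\eta_s)$ at distance $>\theta+\epsilon$ from $\E\eta_0$ for small $\epsilon$; the subcritical part of the proof of Theorem \ref{nogap}(a), run with $\{\eta_s(v)\}$ as initial configuration, then produces with positive probability a vertex whose opinion stays that far from $\E\eta_0$ flanked by two one-sidedly $\epsilon$-flat vertices whose opinions stay in $B[\E\eta_0,\epsilon]$, so both edges at the flanked vertex remain blocked; ergodicity concludes.

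The technical heart — and the step I expect to be the main obstacle — is the production of flat vertices for the \emph{non-i.i.d.}, only finitely dependent configuration $\{\eta_s(v)\}$. One must (i) show, using $\E\eta_0\in\D_\theta(\nu)$ together with the concatenation/approximation constructions behind Lemma \ref{circle}(a) and a law of large numbers for the empirical measure on a long window, that with positive probability a long block of vertices can, by $\theta$-updates internal to the block up to time $s$, be driven with all its opinions within $\epsilon$ of $\E\eta_0$; (ii) shield the block (no Poisson events on its two boundary edges up to time $s$), making this event independent of a far-field event that controls all long running averages via the ergodic theorem; and (iii) check that windows straddling the block boundary still have averages near $\E\eta_0$ — this is where the average-preservation of the dynamics and the $O(1)$ bound on the opinion flux across two fixed edges do the work, short overhangs contributing a vanishing fraction of a long window and long overhangs being controlled by the far-field event. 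Getting the quantifiers in the right order — first the ergodic-theorem threshold, then the block length, then the embedded finite configuration — is what makes this delicate. Finally, a caveat: when $\theta$ coincides with a jump of $\vartheta\mapsto\D_\vartheta(\nu)$ — in particular the borderline $\theta=h$ when $h<R$, and the value $\theta=\theta_\text{\upshape c}$ itself — the reduction via Proposition \ref{supp_t} is unavailable and one meets exactly the critical-compromise phenomenon of Proposition \ref{crit}; for such $\theta$ one argues separately according to whether the opinions at the gap boundary effectively merge in the infinite model (atoms at distance exactly $\theta$ do, so $\supp_\theta(\eta_t)$ remains the full convex hull and the radius argument applies; a continuous gap boundary does not, so $\supp_\theta(\eta_t)$ remains disconnected and the component argument applies).
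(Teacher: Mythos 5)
Your overall architecture (split into $\theta<h$, $\theta<R$, $\theta>\max\{R,h\}$, use the components of $\D_\theta(\nu)$ for the gap case, and produce flat vertices at a positive time $s$ via a shielded block of concatenated finite configurations plus far-field control of running averages for the supercritical case) is the paper's architecture, and your supercritical argument -- including the identification of the block construction, the shielding edges, and the mass-preservation control of averages across them as the technical heart -- matches the paper's proof essentially step for step. The problems are in the subcritical case.

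First, for $\theta<h$ you claim the connected components of $\D_\theta(\nu)$ are pairwise at distance \emph{strictly} greater than $\theta$, justified by saying that two opinions at distance $\leq\theta$ could interact and would place the open segment between them into $\D_\theta(\nu)$. This is false: Lemma \ref{circle}(c) only gives distance at least $\theta$, and the argument behind Lemma \ref{circle}(a) requires distance strictly less than $\theta$, because the finite configurations only \emph{approximate} the two points within $\tfrac r4$, so the actual opinions may sit at distance up to $\theta+\tfrac r2$ and never be allowed to compromise. Example \ref{mu} (the distribution on $\{(0,0),(1,0),(\tfrac1\pi,1)\}$ with $\theta=1$ and $\mu$ rational) exhibits two components of $\D_\theta(\nu)$ at distance exactly $\theta$. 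Likewise your assertion that $\eta_t(v)\in\D_\theta(\nu)$ does not follow from ``finitely many $\theta$-updates''; the paper only proves $\supp_\theta(\eta_t)\subseteq\D_{\theta+\epsilon}(\nu)$. Both defects are exactly what the paper's $\epsilon$-buffer repairs: choose $\epsilon>0$ with $\theta+\epsilon<h$ and $\theta+\epsilon$ not a jump of $\vartheta\mapsto\D_\vartheta(\nu)$ (possible by Lemma \ref{circle}(e)); then the components of $\D_{\theta+\epsilon}(\nu)$ are at distance at least $\theta+\epsilon>\theta$ and contain all opinion values of the $\theta$-dynamics, and the blocking argument goes through.

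Second, your treatment of $h\leq\theta<R$ reduces to running the subcritical part of Theorem \ref{nogap}(a) on $\{\eta_s(v)\}$, which needs one-sidedly $\epsilon$-flat vertices for the time-$s$ configuration and hence $\E\eta_0\in\supp_\theta(\eta_s)$; that in turn needs $\theta>h$ and $\theta$ not a jump point, so the boundary value $\theta=h<R$ (and the finitely many jump points in $(h,R)$) is not covered, and your fallback dichotomy about whether the gap boundary ``effectively merges'' is too vague to close this. The paper avoids the issue entirely: for any $\theta<R$ it picks $y\in\supp(\eta_0)\setminus B[\E\eta_0,\theta+2\epsilon]$, builds a half-space $H$ containing $\E\eta_0$ in its interior and at distance more than $\theta+\epsilon$ from $y$, shows $\Prob(\eta_0\in H)>0$ by the expectation argument, and uses the SLLN plus a stochastic-domination trick to keep all running averages to the right (and left) of a fixed vertex inside $H$; the SAD representation then confines the two flanking opinions to $H$ forever while the middle vertex starts near $y$, blocking both incident edges. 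This requires neither mass around the mean nor any connectivity of $\D_\theta(\nu)$, and so covers all $\theta<R$ uniformly. You should replace your reduction by this hyperplane argument (or at least supply a separate argument for $\theta=h$ and for jump points).
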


\begin{proof}
Having analyzed the qualitative differences invoked by higher-dimen\-sional opinion values, the proof of this
theorem is to a large extent similar to the one of part (a) of Thm.\ 2.2 in \cite{Deffuant}, which is Theorem
\ref{gen} in the foregoing section. Let us consider the following three scenarios:

\begin{enumerate}[(i)]
\item {\em For $\theta<h$ we cannot have consensus:}\\
By definition of $h$ the set $\D_{\theta+\epsilon}(\nu)$ is not
connected for $\epsilon>0$ sufficiently small; by Lemma \ref{circle} (e) we can choose $\epsilon$ such that
$\vartheta\mapsto \D_\vartheta(\nu)$ has no jump at $\theta+\epsilon$ and thus (by Proposition \ref{supp_t}) get
$\D_{\theta+\epsilon}(\nu)=\supp_{\theta+\epsilon}(\eta_t)$ for all $t>0$. In addition, Lemma \ref{circle} (c)
tells us that there exist two connected components, say $C_1$ and $C_2$, both being convex and at distance at least
$\theta+\epsilon$ from the corresponding complementary part of $\supp_{\theta+\epsilon}(\eta_t)$,
i.e.\ $\n{x-y}\geq\theta+\epsilon$ for all $x\in C_i, y\in \supp_{\theta+\epsilon}(\eta_t)\setminus C_i$
and $i=1,2$.

By Lemma \ref{suppt} we know that $\supp(\eta_0)\subseteq\supp_{\theta}(\eta_t)\subseteq\supp_{\theta+\epsilon}(\eta_t)$.
In the Deffuant model with confidence bound $\theta$ opinions in $C_1$ cannot compromise with opinions in
$\supp_{\theta}(\eta_t)\setminus C_1\subseteq\supp_{\theta+\epsilon}(\eta_t)\setminus C_1$ and thus never leave
the convex set $C_1$. The same holds for $C_2$.

Consequently, $\Prob(\eta_0(v)\in C_i)=\Prob(\eta_t(v)\in C_i)>0$, for $i=1,2$. For a fixed vertex $v$, it follows
from the independence of initial opinions that $\Prob(\eta_0(v)\in C_1,\eta_0(v+1)\in C_2)>0$, which dooms the edge
$\langle v, v+1\rangle$ to be blocked for all $t\geq0$, due to $\n{\eta_t(v)-\eta_t(v+1)}\geq\theta+\epsilon$.
Ergodicity of the initial configuration ensures that a.s.\ infinitely many neighboring vertices will be prevented from
compromising by holding opinions in $C_1$ and $C_2$ respectively, hence no consensus in the long run.
    \vspace{0.3cm}
    \begin{figure}[H]
     \hspace{6.6cm} \includegraphics[scale=1.1]{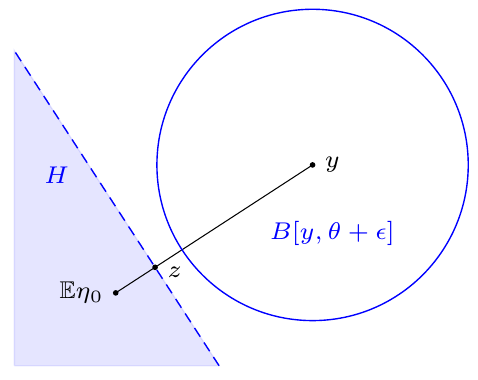}
    \end{figure}  
    \vspace*{-5.5cm}
\item {\em For $\theta<R$ we cannot have consensus:}
      \par\begingroup \rightskip17em
      Given $\theta<R$, there exists some $y\in\supp(\eta_0)\setminus B[\E\eta_0,\theta+2\epsilon]$ for fixed
      $\epsilon\in\big(0,\tfrac{R-\theta}{2}\big)$. Choose $z$ to be the point on the line segment connecting
      $\E\eta_0$ and $y$ which has Euclidean distance $\epsilon$ to $\E\eta_0$, see the picture to the right.
      With help of this point, define the half-space $H:=\{x\in\R^k,\; (x-z)\cdot(y-z)\leq0\}$. 
      Clearly, $B[\E\eta_0,\epsilon]\subseteq H$ and by the
      same\hfill argument\hfill as\hfill in\hfill part\hfill (e)\hfill of
      \par\endgroup\vspace*{-0.14cm}
      Lemma \ref{circle}: $\Prob(\eta_0\in H)>0$, as the contrary would imply
      $$\E[(\eta_0-z)\cdot(y-z)]>0>(\E\eta_0-z)\cdot(y-z),$$
      a contradiction.
      
      Using this auxiliary construction, we can finish the proof of this subcase following the argument in the proof
      of Theorem \ref{gen} (b), see Thm.\ 2.2 in \cite{Deffuant}. As the distribution is bounded, the SLLN states
      \begin{equation}\label{SLLN}
       \Prob\left(\lim_{n\to\infty}\frac{1}{n}\sum_{u=v+1}^{v+n}\eta_0(u)=\E\eta_0\right)=1.
      \end{equation}
      Consequently, for sufficiently large $N\in\N$ the following event has non-zero probability:
      \begin{equation*}
       A_N:=\left\{\frac{1}{n}\sum_{u=v+1}^{v+n}\eta_0(u)\in H\text{ for all }n\geq N\right\}.
      \end{equation*}
      Let $\xi$ denote the (real-valued) distribution of $(\eta_0-z)\cdot(y-z)$ and $\xi|_{(-\infty,0]}$ its
      distribution conditioned on the event $\{(\eta_0-z)\cdot(y-z)\leq0\}=\{\eta_0\in H\}$. Obviously, 
      $\xi|_{(-\infty,0]}$ is stochastically dominated by $\xi$, i.e.\ $\xi|_{(-\infty,0]}\preceq\xi$, which implies
      $$\left(\bigotimes_{u=v+1}^{v+N}\xi|_{(-\infty,0]}\right)\otimes\left(\bigotimes_{u>v+N}\xi\right)
      \preceq\bigotimes_{u\geq v+1}\xi .$$
      
      Let $B$ be the event $\{\eta_0(v+1)\in H,\dots,\eta_0(v+N)\in H\}$, which has non-zero probability
      by independence, and
      $$A_1:=\left\{\frac{1}{n}\sum_{u=v+1}^{v+n}\eta_0(u)\in H\text{ for all }n\in\N\right\}.$$
      Rewriting the event $A_N$ as
      \begin{equation*}
       A_N=\left\{\frac{1}{n}\sum_{u=v+1}^{v+n}\big(\eta_0(u)-z\big)\cdot\big(y-z\big)\leq0\text{ for all }n\geq N\right\},
      \end{equation*}
      the stochastic domination from above yields:
      \begin{align*}
       \Prob(A_1)&\geq\Prob(A_1\cap B)=\Prob(A_N\cap B)=\Prob(A_N|B)\cdot\Prob(B)\\
                 &\geq\Prob(A_N)\cdot\Prob(B)>0.
      \end{align*}
      The very same ideas as in the proof of Prop.\ 5.1 in \cite{ShareDrink} show that if $A_1$ occurs and the edge
      $\langle v,v+1\rangle$ doesn't allow for an update up to time $t>0$, irrespectively of the dynamics on
      $\{u\in\Z, u\geq v+1\}$, we get that $\eta_t(v+1)$ is a convex combination of the averages
      $\{\frac{1}{n}\sum_{u=v+1}^{v+n}\eta_0(u),\; n\in\N\}$, hence in $H$ as the latter is convex.
      By symmetry, the same holds for site $v-1$ and the half-line to the left, i.e.\ $\{u\in\Z, u\leq v-1\}$.
      Independence of the initial opinions therefore guarantees that with positive probability, the initial
      configuration can be such that $\eta_0(v)\in B(y,\epsilon)$ and the values at sites $v-1$ and $v+1$ are doomed
      to stay in $H$, blocking the edges adjacent to $v$ once and for all, as the distance of $y$ to $H$ is at least
      $\theta+\epsilon$. Ergodicity makes sure that with probability $1$ infinitely many sites will get stuck
      this way.

\item {\em For $\theta>\max\{R,h\}$ we get a.s.\ strong consensus:}\\
Choose $\beta$ such that $0<\beta<\theta-\max\{R,h\}$. By definition of $h$ and Lemma \ref{circle} (e),
$\E\eta_0\in\D_{\theta-\beta}(\nu)$. Because of that, for all $\epsilon>0$, there exists a finite configuration
such that the final opinion values all lie in $B[\E\eta_0,\tfrac{\epsilon}{6}]$,
i.e.\ $n\in\N$, $x_1,\dots,x_n\in\supp(\eta_0)$ and an edge sequence $(e_i)_{i=1}^N$ from 
$\{\langle1,2\rangle,\dots,\langle n-1,n\rangle\}$, s.t.\ updating the configuration with respect to the confidence bound
$\theta-\beta$ yields $\eta_N(v)\in B[\E\eta_0,\tfrac{\epsilon}{6}]$ for all $v\in\{1,\dots,n\}$, see Definition
\ref{Dtheta}. From this point on, we can go about as in step (ii) of the proof of Thm.\ 2.2 (a) in \cite{Deffuant}:

      Let us consider some fixed time point $t>0$ and the corresponding configuration $\{\eta_t(v)\}_{v\in\Z}$.
      With probability 1, there exists an infinite increasing sequence of not necessarily consecutive edges
      $(\langle v_k,v_k+1\rangle)_{k\in\N}$ to the right of site $1$, on which no Poisson event has occurred up to
      time $t$.
      
      Let $l_k:=v_{k+1}-v_k,\text{ for } k\in\N,$ denote the random lengths of the intervals in between and
      $l_0:=v_1-v_0+1$ the one of the interval including $1$, where $\langle v_0-1,v_0\rangle$ is the first 
      edge to the left of $1$ without Poisson event. Since the involved Poisson processes are independent,
      it is easy to verify that the $l_k,\ k\in\N_0=\{0,1,2,\dots\}$, are i.i.d., having a geometric distribution
      on $\N$ with parameter $\text{e}^{-t}$.
      
      For $\delta>0$, let $A_\delta$ be the event that $l_0$ is finite and only finitely many of the events
      $\{l_k\geq k\,\tfrac{\delta}{R}\},\ k\in\N,$ occur. Then their independence and the Borel-Cantelli lemma tell
      us that $A_\delta$ has probability $1$. On $A_\delta$ however the following holds a.s.\ true:\vspace{-0.5cm}
      
      \begin{align*}
       \limsup_{v\to\infty}\Big\lVert\frac{1}{v}\sum_{u=1}^{v}\eta_t(u)-\E\eta_0\Big\rVert_2
       &=\limsup_{v\to\infty}\Big\lVert\frac{1}{v}\sum_{u=1}^{v}\big(\eta_t(u)-\E\eta_0\big)\Big\rVert_2\\
       &=\limsup_{v\to\infty}\Big\lVert\frac{1}{v}\sum_{u=v_0}^{v}\big(\eta_t(u)-\E\eta_0\big)\Big\rVert_2\\
       &\leq\limsup_{v\to\infty}\Big\lVert\frac{1}{v}\sum_{u=v_0}^{v}\big(\eta_0(u)-\E\eta_0\big)\Big\rVert_2
       +\delta\\
       &=\limsup_{v\to\infty}\Big\lVert\frac{1}{v}\sum_{u=1}^{v}\big(\eta_0(u)-\E\eta_0\big)\Big\rVert_2
       +\delta\\
       &=\delta.
      \end{align*}
 
      The second and second to last equality follow from the finiteness of $v_0$, the last equality from
      the SLLN applied to the sequence $(\eta_0(u))_{u\geq1}$, stating
      $$\lim_{v\to\infty}\frac{1}{v}\sum_{u=1}^{v}\eta_0(u)=\E\eta_0\text{ almost surely.}$$
      The inequality is due to the fact that the Deffuant model is mass-preserving in the sense that
      $\eta_t(u)+\eta_t(v)=\eta_{t-}(u)+\eta_{t-}(v)$ in (\ref{dynamics}), hence for all $k\in\N$:
      $\sum_{u=v_0}^{v_k}\eta_0(u)=\sum_{u=v_0}^{v_k}\eta_t(u)$. For the average at time $t$ running from $v_0$
      to some $v\in\{v_k+1,\dots, v_{k+1}\}$ to differ by more than $\delta$ from the one at time 0, the interval
      has to be of length more than $k\,\tfrac{\delta}{R}$, since $v_k\geq k$ and $\n{\eta_t(u)-\E\eta_0}\in[0,R]$ for
      all $t,u$. This, however, will happen only finitely many times.
      
      Since $\delta>0$ was arbitrary, we have established that even for $t>0$
      \begin{equation}\label{outerpart}
       \lim_{v\to\infty}\frac{1}{v}\sum_{u=1}^{v}\eta_t(u)=\E\eta_0 \text{ almost surely.}
      \end{equation}
      Now we are going to use the finite configuration from above and a conditional version of the so-called
      {\em local modification}, a technique often used in percolation theory.
      Due to (\ref{outerpart}), there exists some integer number $k$ s.t.\ the event
      $$A:=\left\{\frac{1}{v}\sum_{u=1}^{v}\eta_t(u)\in B[\E\eta_0,\tfrac{\epsilon}{3}]\text{ for all }v\geq kn\right\}$$
      has probability greater than $1-\text{e}^{-2t}$.
      
      Let $B$ in turn be the event that there was no Poisson event on $\langle 0,1\rangle$ and 
      $\langle kn,kn+1\rangle$ up to time $t$, hence $\Prob(B)=\text{e}^{-2t}$. Finally, let $C$ be the event
      that the initial values satisfy
      $$\eta_0(ln+i)\in B[x_i,\min\{\beta,\tfrac{\epsilon}{6}\}],\text{ for all }0\leq l\leq k-1\text{ and }1\leq i\leq n,$$
      and the Poisson firings on the edges $\langle 0,1\rangle,\dots,\langle kn,kn+1\rangle$ up to time $t$ are given
      by a concatenation of the $k$ finite sequences given by shifting $(e_i)_{i=1}^N$ $ln$ vertices to the right,
      $0\leq l\leq k-1$. In other words, up to time $t$ there are no Poisson events on the $k+1$ edges
      $\{\langle0,1\rangle,\langle n,n+1\rangle,\dots,\langle kn,kn+1\rangle\}$ and the dynamics in the $k$ blocks
      $\{ln+1,\dots,(l+1)n\}$ resembles the dynamics of the finite configuration, accordingly leading to
      $\eta_t(v)\in B[\E\eta_0,\tfrac{\epsilon}{3}]$ for all $v\in\{1,\dots,kn\}$, see also the proof of
      Proposition \ref{supp_t}. Note that $C$ has non-zero probability, $C\subseteq B$ and also $A\cap B$ has strictly
      positive probability as $\Prob(A\cap B^\mathsf{c})\leq\Prob(B^\mathsf{c})=1-\text{e}^{-2t}<\Prob(A)$.
      
      Consider two configurations $\{\eta_0'(v)\}_{v\in\Z}$ and $\{\eta_0''(v)\}_{v\in\Z}$, independent from each other and
      having the same distribution as $\{\eta_0(v)\}_{v\in\Z}$ underlying the dynamics of the Deffuant model. Then also
      the compound configuration
      $$\tilde{\eta}_0(v)=\begin{cases}\eta_0'(v),&\text{for }v\in\{1,\dots,kn\}\\
      																 \eta_0''(v),&\text{for }v\notin\{1,\dots,kn\}\end{cases}$$
      has the i.i.d.\ distribution of the initial configuration.
      With positive probability $A\cap B$ occurs for the initial configuration $\{\eta_0''(v)\}_{v\in\Z}$ and
      $C$ for the initial configuration $\{\eta_0'(v)\}_{v\in\Z}$. The fact that $(\tilde{\eta}_s(v))_{v\in\Z}$
      equals $\{\eta_s'(v)\}_{v\in\Z}$ on $\{1,\dots,kn\}$ and $\{\eta_s''(v)\}_{v\in\Z}$ outside $\{1,\dots,kn\}$ for
      $s\in[0,t]$ given $B$, together with the independence of the involved building block configurations, shows
      that with positive probability $A\cap B\cap C'$ holds for the configuration at time $t$, where
      $$C'=\left\{\eta_t(v)\in B[\E\eta_0,\tfrac{\epsilon}{3}]\text{ for all }v\in\{1,\dots,kn\}\right\}.$$
      
      An easy calculation reveals that $A\cap C'$ implies the $\epsilon$-flatness
      to the right of site $1$ in the configuration at time $t$. By symmetry in left and right, the same holds true
      for the site $0$ and $\epsilon$-flatness to the left with respect to the configuration $\{\eta_t(v)\}_{v\in\Z}$.
      As the two parts $\{\eta_t(v)\}_{v\leq0}$ and $\{\eta_t(v)\}_{v\geq1}$ of the configuration at time $t$ are
      conditionally independent given there was no Poisson event on the edge $\langle0,1\rangle$ up to time $t$, we
      have actually shown that the origin is two-sidedly $\epsilon$-flat with respect to the configuration
      $\{\eta_t(v)\}_{v\in\Z}$ with positive probability.
       
      The supercritical case is now settled as in part (a) of Theorem \ref{nogap}. Following the reasoning of
      Sect.\ 6 in \cite{ShareDrink}, the proof of La.\ 6.3 there tells us that a two-sidedly $\epsilon$-flat vertex will
      never move further than $6\epsilon$ away from the mean and Prop.\ 6.1 guarantees that two neighbors will a.s.\ 
      either finally concur or end up further than $\theta$ apart from each other. Choosing 
      $0<\epsilon<\tfrac{\theta-R}{6}$ the latter is impossible for vertices neighboring a two-sidedly $\epsilon$-flat
      vertex, which means that they will a.s.\ finally concur and the same holds true for every vertex by induction.
      Ergodicity of the setting at time $t$ guarantees that there will be a.s.\ (infinitely many) two-sidedly 
      $\epsilon$-flat vertices forcing almost sure strong consensus.
\end{enumerate}\vspace*{-0.9em}
\end{proof}

\begin{remark}
It is worth emphasizing that only the support and expected value of a bounded initial distribution determine
the critical value for $\theta$: As long as it does not affect the support, the dependence relations between the
coordinates of the random vector $\eta_0$ do not influence the critical parameter $\theta_\text{\upshape c}$.

Furthermore, having proved this result for more general multivariate distributions, part (a) of Theorem \ref{nogap}
becomes a special case of Theorem \ref{gapsEucl}, since using part (d) of Lemma \ref{circle} shows that the maximal gap
in a distribution of $\eta_0$ with mass around its mean cannot be larger than its radius, i.e.\ $h\leq R$.\\[0.5em]
\noindent
Finally, the requirement that the initial opinions are independent is not as vital as it might seem. The independence
was merely used to guarantee that we can locally modify initial configurations and still obtain events with positive
probability. Consequently, the i.i.d.\ property can be replaced by the weaker condition that $\{\eta_0(v)\}_{v\in\Z}$
is a stationary sequence, ergodic with respect to shifts and allowing conditional probabilities such that the
conditional distribution of $\eta_0(0)$ given $\{\eta_0(v)\}_{v\in\Z\setminus\{0\}}$ almost surely has the same support
as the marginal distribution $\mathcal{L}(\eta_0)$, with the above conclusions remaining valid. This last condition
is a natural extension to continuous state spaces of the well-known {\em finite energy condition} from percolation theory 
-- for a more detailed discussion of this extension to dependent initial opinions, see Sect.\ 2.2 in \cite{Deffuant}.
\end{remark}

\begin{example}\label{sphere}
\begin{enumerate}[(a)]
\item With Theorem \ref{gapsEucl} in hand, we can finally settle the case of $\eta_0\sim\text{\upshape unif}(S^{k-1})$.
Irrespectively of $k$, this distribution has radius $R=1$, but for $k=1$, the maximal gap is $h=2$, for $k>1$ instead
$h=0$. By the above theorem, we can conclude
$$\theta_\text{\upshape c}=\max\{R,h\}=\begin{cases}
   2,&\text{for }k=1\\
   1,&\text{for }k\geq2.
   \end{cases}$$
 In short, the fact that $S^{k-1}$ is disconnected for $k=1$ but connected for $k\geq2$ makes all the difference. 
\item If the random vector $\eta_0$ has independent coordinates, each being Bernoulli distributed with parameter
$p\in(0,1)$, i.e.\ for all $1\leq i\leq k$
$$\Prob\big(\eta_0^{(i)}=1\big)=1-\Prob\big(\eta_0^{(i)}=0\big)=p,$$ its support is the hypercube $\{0,1\}^k$ and
the expected value $\E\eta_0=p\,\mathbf{e}$, where $\mathbf{e}$ is the $k$-dimensional vector of all ones.
The radius of this initial distribution is
$R=\max\{\n{\E\eta_0-\mathbf{0}},\n{\E\eta_0-\mathbf{e}}\}=\sqrt{k}\,\max\{p,1-p\}.$
It is not hard to see that a distribution with the hypercube as its support has the maximal gap $h=1$. Indeed,
for $\theta<1$ no two opinion values can interact, for $\theta>1$ all neighboring corners get within the confidence
bound and their pairwise convex hulls form the edges of the hypercube, hence their union is a connected set giving
$\D_\theta(\nu)=[0,1]^k$, for $\theta>1$, by means of Lemma \ref{circle}.

In conclusion, the Deffuant model with this initial distribution features the critical value
$$\theta_\text{\upshape c}=\begin{cases}
   1,&\text{for }k=1\text{ or }k=2,3 \text{ and }p\in[1-\tfrac{1}{\sqrt{k}},\tfrac{1}{\sqrt{k}}]\\
   \sqrt{k}\,\max\{p,1-p\},&\text{for }k\geq4 \text{ or } k=2,3 \text{ and }
   p\notin[1-\tfrac{1}{\sqrt{k}},\tfrac{1}{\sqrt{k}}].
   \end{cases}$$
As stated in the above remark, the independence of the individual coordinates is not essential, as long as 
the support stays unchanged. A relation like $\eta_0^{(1)}=1-\eta_0^{(2)}$ in the Bernoulli example with parameter
$p=\tfrac12$ however, will influence both $\supp(\eta_0)$ and as a consequence $\theta_\text{\upshape c}$ as well.
\end{enumerate}
\end{example}

\begin{example}\label{mu}
There is one more crucial change when the opinions in the Deffuant model on $\Z$ are given by vectors instead of
real numbers. The parameter $\mu$, shaping the size of compromising steps, which was of no particular
interest so far, can actually play a crucial role in the critical case.

In order to verify this claim, let us consider the two-dimensional initial distribution given by
$\text{\upshape unif}(\{(0,0),(1,0),(\tfrac{1}{\pi},1)\})$, which is depicted below.
Given $\theta=1$ we have
$$[0,1]\times\{0\}\subseteq\supp_\theta(\eta_t)\text{ for all }t>0,$$
following the reasoning of Example \ref{jump}.
But the point $(\tfrac{1}{\pi},0)$ can only be approximated, never attained by $\eta_t(v)$, if $\mu$
is rational for example. For $\mu=\tfrac{1}{\pi}$ on the other hand, $\eta_t(v)=(\tfrac{1}{\pi},0)$
with positive probability which leads to $\supp(\eta_t)=\conv(\supp(\eta_0))$.

Note that for this distribution, we have $h=1>R$, since $\E\eta_0=\tfrac13\,(1+\tfrac{1}{\pi},1)$.
\par\begingroup \rightskip13.5em\noindent
Similarly to the proof of the above theorem, we can
conclude that the Deffuant model on $\Z$ with confidence bound $\theta=\theta_\text{\upshape c}=1$ and this initial
distribution approaches almost surely no consensus for $\mu\in(0,\tfrac12]\cap\mathbb{Q}$ and 
almost surely strong consensus for $\mu=\tfrac{1}{\pi}$:

If $\mu$ is rational, vertices holding the initial opinion $(\tfrac{1}{\pi},1)$ can never compromise with
such holding an opinion $(a,0)$ since $a$ is rational and can therefore not be $\tfrac{1}{\pi}$. Consequently,
we will have a.s.\ no consensus due to blocked edges.
\par\endgroup
\vspace*{-4.8cm}
 \begin{figure}[H]
     \hspace*{8.5cm}
     \unitlength=0.80mm
     \begin{picture}(45.00,45.00)
          
          \put(-2.00,0.00){\vector(1,0){40.00}}
          \put(10.19,-1.50){\line(0,1){3.00}}
          \put(-1.50,32.00){\line(1,0){3.00}}
          \put(0.00,-2.00){\vector(0,1){40.00}}
          \put(32.00,-1.50){\line(0,1){3.00}}
          \put(-4.30,-5.00){$\scriptstyle 0$}
          \put(-4.30,31.00){$\scriptstyle 1$}
          \put(31.25,-5.00){$\scriptstyle 1$}
          \put(8.70,-6.50){$\scriptstyle \tfrac{1}{\pi}$}
          
          {\color[rgb]{0,0,1}
          \put(7.00,22.00){$\scriptstyle \mathcal{L}(\eta_0)$}
          \put(-0.10,0.00){\circle*{1.5}}
          \put(31.97,0.00){\circle*{1.5}}
          \put(10.14,32.00){\circle*{1.5}}}
          \put(14.03,10.63){\circle*{1.5}}
          \put(16.00,10.00){$\scriptstyle \E\eta_0$}
      \end{picture}
  \end{figure}
\vspace*{0.4cm}
  
If $\mu=\tfrac{1}{\pi}$ however, we can come up with a finite configuration allowing for the local modification,
which guaratees the existence of two-sidedly $\epsilon$-flat vertices. Actually $n=3$ is enough and $$x_1=(1,0),\ 
x_2=(0,0),\ x_3=(\tfrac{1}{\pi},1)$$ will be an appropriate choice of starting values, if the edge sequence
$(e_i)_{i=1}^N$ begins with $e_1=\langle1,2\rangle,\ e_2=\langle2,3\rangle$, since that will bring the value at site
$1$ to $(1-\tfrac{1}{\pi},0)$, the one at $2$ to $(\tfrac{1}{\pi},\tfrac{1}{\pi})$ and the one at
$3$ to $(\tfrac{1}{\pi},1-\tfrac{1}{\pi})$, all lying in $B[\E\eta_0,\tfrac12]$, and thus their pairwise
distances are all less than the confidence bound. If the edge sequence contains the edge pair 
$(\langle1,2\rangle,\langle2,3\rangle)$ enough times, the final values of the finite configuration will all lie
at Euclidean distance at most $\tfrac\epsilon3$ from the initial average $\tfrac13(x_1+x_2+x_3)=\E\eta_0$ for any fixed
$\epsilon>0$. Note that in the present case, when transforming the finite configuration into a part of the dynamics on
the whole line graph, we don't have to worry about taking small balls around the initial values $x_i$ in order to
get an event $C$ with positive probability, since the $x_i$ are atoms of the initial distribution. Taking small balls
would actually invalidate the argument due to the fact that the parameter $\theta$ is pinned to the critical value
$\theta_\text{\upshape c}=1$ not allowing for small marginals.\\[0.5em]
\noindent
Another fact that can be seen from this example is that the jumps of the mapping $\vartheta\mapsto \D_\vartheta(\nu)$
do not have to be continuous from the right in the sense that $\D_\theta(\nu)=\bigcap_{\vartheta>\theta}
\D_\vartheta(\nu)$. Given $\mu\in\mathbb{Q}$ we get for this initial distribution
 $$\D_\theta(\nu)=\begin{cases}
   \supp(\eta_0),&\text{for }\theta<1\\
   [0,1]\times\{0\}\cup\{(\tfrac{1}{\pi},0)\},&\text{for }\theta=1\\
   \conv(\supp(\eta_0)),&\text{for }\theta>1,
   \end{cases}$$
hence there can actually be a double jump.
\end{example}

\section{Metrics other than the Euclidean distance}

Having investigated the changes that multidimensional opinion values cause in the Deffuant model,
another interesting aspect is the impact of the measure of distance between two opinions.
What happens if we apply some general metric $\rho$ other than the natural choice given by the
Euclidean norm?

Although this generalization does not entirely fit the framework as laid out in Section 1, it is not
worth repeating all the definitions as one would simply have to replace all appearing distances $\n{x-y}$
by $\rho(x,y)$ correspondingly. Note however that switching to a general metric $\rho$ influences the
dynamics of the Deffuant model only in determining which opinion values are within `speaking distance',
that is allowing for an update if neighbors with corresponding opinions interact. Once the two
values are close enough in this respect, the updated opinion values will just be the convex combinations
described in (\ref{dynamics}), even if the straight line connecting both values might no longer be the
geodesic between them (as in the Euclidean case) and the steps taken towards the arithmetic average
can be of different length if $\rho$ is not translation invariant.

With respect to the considerations in the foregoing section, the following
properties of a distance measure play an important role.

\begin{definition}\label{extracon}
Consider a metric $\rho$ on $\R^k$. 
\begin{enumerate}[(i)]
\item
Let the metric $\rho$ be called {\itshape sensitive to 
coordinate} $i$, if there exists a function $\varphi:[0,\infty)\to[0,\infty)$ such that 
$\lim_{s\to\infty}\varphi(s)=\infty$ and for any two vectors $x, y\in\R^k$ with $|x_i-y_i|> s$, it holds that
$\rho(x,y)>\varphi(s)$.
\item
Call $\rho$ {\em locally dominated by the Euclidean distance}, if there exist some $\gamma,c>0$
such that for $x,y\in\R^k$ with $\n{x-y}\leq\gamma$ it holds that 
\begin{equation}\label{domi}\rho(x,y)\leq c\cdot||x-y||_2.\end{equation}
\item
Finally, let $\rho$ be called {\itshape weakly convex} if for all $x,y,z\in\R^k$:
$$\rho(x, \alpha y+(1-\alpha)\,z)\leq \max\{\rho(x,y),\rho(x,z)\}\quad\text{for all }\alpha\in[0,1].$$
\end{enumerate}
\end{definition}

\noindent
The convexity of balls $B_\rho(x,r)=\{y\in\R^k,\;\rho(x,y)<r\}$ generated by the metric is a crucial feature. 
It is not hard to check that the balls generated by $\rho$ are convex if and only if the metric is weakly
convex: Sufficiency is obvious, since $y,z\in B_\rho(x,r)$ immediately gives $\conv(\{y,z\})\subseteq B_\rho(x,r)$.
As to necessity, if there are $x,y,z\in\R^k$, $\alpha\in(0,1)$ s.t.\ 
$\rho(x, \alpha y+(1-\alpha)\,z)>\max\{\rho(x,y),\rho(x,z)\} $, we can choose 
$r\in(\max\{\rho(x,y),\rho(x,z)\},\rho(x, \alpha y+(1-\alpha)\,z))$ and conclude that $B_\rho(x,r)$
can not be convex. It should be mentioned that when talking about the metric space $(\R^k,\rho)$, we will
always assume that it is equipped with the Borel $\sigma$-algebra generated by the metric $\rho$.

If $\rho$ is locally dominated by the Euclidean distance, we can find a constant $C=C(\theta)$ such that
(\ref{domi}) holds in fact for all $x,y\in\R^k$ with $\rho(x,y)\leq\theta$ if $c$ is replaced by $C$:
If $\n{x-y}>\gamma$ but $\rho(x,y)\leq\theta$, we can conclude that
$$\rho(x,y)\leq\theta\leq\tfrac{\theta}{\gamma}\,\n{x-y},$$
hence $C:=\max\{c,\tfrac{\theta}{\gamma}\}$ will do.

\begin{definition}
Let the Deffuant model with respect to a general distance measure $\rho$ be defined just as in Section \ref{intro},
with the only change that the restriction of the confidence bound in (\ref{dynamics}) will now rule that Poisson
events cause updates only if $\rho(a,b)\leq\theta$, where $a,b$ denote the opinion values at the corresponding
vertices. As the convexity of balls is enormously important in the analysis presented in the foregoing
section, in what follows $\rho$ will be assumed to be weakly convex.

No consensus still means that we have finally blocked edges, that is some $\langle u,v\rangle$
s.t.\ $\rho(\eta_{t}(u),\eta_{t}(v))>\theta$ for all $t$ large enough. Similarly, the convergence notion in the
definition of consensus is now based on the distance $\rho$.

As before, the initial opinions are i.i.d.\ with some common distribution $\mathcal{L}(\eta_0)$ on $\R^k$. 
If the distribution of $\eta_0$ has a finite expectation, we define its radius with respect to $\rho$ as
$$R_\rho:=\inf\left\{r>0,\;\Prob\big(\eta_0\in B_\rho(\E\eta_0,r)\big)=1\right\},$$
similarly to the Euclidean case, see Definition \ref{radius}.

Likewise, the notion of $\epsilon$-flatness transfers to the new setting as follows:
A vertex $v\in\Z$ is called
{\em $\epsilon$-flat (with respect to $\rho$)} to the right in the initial configuration $\{\eta_0(u)\}_{u\in\Z}$
if for all $n\geq0$:
\begin{equation}\label{rhoflat}
  \frac{1}{n+1}\sum_{u=v}^{v+n}\eta_0(u)\in B_\rho(\E\eta_0,\epsilon),
\end{equation}
similarly for $\epsilon$-flatness to the left and two-sided $\epsilon$-flatness.
\end{definition}

\noindent
By imposing appropriate additional restrictions on the weakly convex metric $\rho$ and the initial distribution,
we can retrieve the result of Theorem \ref{nogap} also in this generalized setting. The extra restriction
on $\mathcal{L}(\eta_0)$ is that $\E[\eta_0^{\,2}]$ is finite, as this is no longer directly implied by the finiteness
of the initial distribution's radius (just think of a bounded metric). The Cauchy-Schwarz inequality implies
that this constraint is equivalent to the finiteness of the entries in the covariance matrix corresponding to
the distribution of $\eta_0$, which is why we will simply refer to it as having a finite second moment, just
as in the univariate case.

Finally, note that if we fix an initial distribution $\mathcal{L}(\eta_0)$, due to the update rule (\ref{dynamics}),
all possible future opinion values lie in the convex hull of its support,
$\conv(\supp\eta_0)$.
For this reason it will suffice in every respect that $\rho$ is weakly convex (and possibly locally dominated by the
Euclidean norm) on $\conv(\supp\eta_0)$ only, not the entire $\R^k$.

\begin{theorem}\label{nogaprho}
In the Deffuant model on $\Z$ with the underlying opinion space $(\R^k,\rho)$ and an initial opinion
distribution $\mathcal{L}(\eta_0)$ we have the following limiting behavior:
\begin{enumerate}[(a)]
\item If $\rho$ is locally dominated by the Euclidean distance and $\mathcal{L}(\eta_0)$ has a finite second 
moment, a finite radius $R_\rho\in[0,\infty)$ and mass around its mean, i.e.\
\begin{equation}\label{rhomatm}
\Prob\big(\eta_0\in B_\rho(\E\eta_0,r)\big)>0 \text{ for all }r>0,
\end{equation}
the critical parameter is $\theta_\text{\upshape c}=R_\rho$, meaning that for $\theta<R_\rho$ we have a.s.\ no
consensus and for $\theta>R_\rho$ a.s.\ strong consensus.
\item Let $\eta_0=(\eta_0^{(1)},\dots,\eta_0^{(k)})$ be the random initial opinion vector. If one of the coordinates
$\eta_0^{(i)}$ has an unbounded marginal distribution (with respect to the absolute value), its expected value
exists (regardless of whether finite, $+\infty$ or $-\infty$) and $\rho$ is sensitive to this coordinate, the limiting
behavior will a.s.\ be no consensus, irrespectively of $\theta$.
\end{enumerate}
\end{theorem}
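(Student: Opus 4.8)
The plan is to reprove Theorem \ref{nogaprho} by retracing the proof of Theorem \ref{nogap} line by line, marking every place where the Euclidean norm $\n{\,\cdot\,}$ was exploited and checking that the new hypotheses suffice to carry the argument over with $\rho$ in its place. Two features do all the work: weak convexity of $\rho$, i.e.\ convexity of the balls $B_\rho(\E\eta_0,r)$, which keeps the opinion values — always convex combinations of support points by (\ref{dynamics}) — inside the relevant $\rho$-balls; and local domination by the Euclidean distance, which transports the (Euclidean) strong law of large numbers and the (Euclidean) energy estimate of \cite{ShareDrink} into genuine statements about $\rho$. Finiteness of $\E[\eta_0^{\,2}]$ is the one genuinely new ingredient needed for part (a), and sensitivity to the distinguished coordinate is the one needed for part (b).

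For the subcritical regime $\theta<R_\rho$ of part (a), I would first use the multivariate SLLN to get that the partial averages $\frac1{n+1}\sum_{u=1}^{n+1}\eta_0(u)$ converge to $\E\eta_0$ in Euclidean distance, then invoke local domination so that these averages lie in $B_\rho(\E\eta_0,\tfrac\epsilon3)$ for all $n$ beyond some a.s.\ finite threshold; fixing a deterministic threshold reached with positive probability and locally modifying the first few opinions into $B_\rho(\E\eta_0,\tfrac\epsilon3)$ via (\ref{rhomatm}) gives, by convexity of $B_\rho(\E\eta_0,\epsilon)$, that with positive probability site $1$ is $\epsilon$-flat to the right (with respect to $\rho$), and symmetrically site $-1$ is $\epsilon$-flat to the left. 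Since $\theta<R_\rho$, for $\epsilon$ small $\Prob\big(\eta_0\notin B_\rho(\E\eta_0,\theta+2\epsilon)\big)>0$, so by independence there is positive probability that $\eta_0(0)$ sits at $\rho$-distance more than $\theta+2\epsilon$ from the mean while both neighbours are one-sidedly $\epsilon$-flat; the SAD representation together with convexity of $B_\rho(\E\eta_0,\epsilon)$ keeps $\eta_t(\pm1)$ in $B_\rho(\E\eta_0,\epsilon)$ for all $t$, so the triangle inequality for $\rho$ dooms the edges at $0$ to stay blocked for ever, and ergodicity yields infinitely many such isolated vertices.

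For the supercritical regime $\theta>R_\rho$ of part (a): as in the Euclidean case, "$0$ flat to the left and $1$ flat to the right" (each of positive probability, and combining to two-sided $\epsilon$-flatness of both, by convexity of the ball) together with ergodicity gives a.s.\ infinitely many two-sidedly $\epsilon$-flat vertices, and the adaptation of La.\ 6.3 of \cite{ShareDrink} — again via convexity of $\rho$-balls and the SAD decomposition into two-sided partial averages — shows such a vertex never leaves $B_\rho(\E\eta_0,6\epsilon)$. Here the finite second moment enters: the energy $W_t(v)=\n{\eta_t(v)}^2$, read as a Euclidean dot product, has finite, non-increasing, non-negative expectation, and a compromise between neighbours, which can occur only when their $\rho$-distance is $\leq\theta$, still dissipates $2\mu(1-\mu)\,\n{\eta_{t-}(u)-\eta_{t-}(v)}^2$, so — mimicking Prop.\ 6.1 of \cite{ShareDrink} — each edge is a.s.\ either eventually $\rho$-blocked or its endpoints' Euclidean, hence by local domination also $\rho$-distance tends to $0$. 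Since every opinion value lies in the convex set $B_\rho(\E\eta_0,R_\rho)$ (again using weak convexity) and one can arrange $6\epsilon<\theta-R_\rho$, no edge incident to a two-sidedly $\epsilon$-flat vertex is ever blocked, so it concurs with its neighbours; propagating this inductively and letting $\epsilon\downarrow0$ gives a.s.\ strong consensus. I expect this supercritical half to be the main obstacle: one has to check that $\E[\eta_0^{\,2}]<\infty$ is exactly the right extra hypothesis, that the dissipation identity is insensitive to which metric gates the update, and that local domination really does upgrade "Euclidean discrepancy $\to0$" to "$\rho$-discrepancy $\to0$", so that consensus is genuinely attained in the metric $\rho$; the adaptation of La.\ 6.3 is the other point needing care, though it is essentially routine once one observes that the SAD decomposition only needs the target set to be convex.

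Part (b) I would reduce to the distinguished coordinate $i$ exactly as in part (b) of Theorem \ref{nogap}: since $\lim_{s\to\infty}\varphi(s)=\infty$, for the fixed $\theta$ pick $s$ with $\varphi(s)>\theta$, and sensitivity then turns "$|\eta^{(i)}(u)-\eta^{(i)}(v)|>s$" into "$\rho(\eta(u),\eta(v))>\varphi(s)>\theta$", i.e.\ into a blocked edge. The arguments of Theorem \ref{gen}(b), applied to the real-valued, unbounded, with existing mean $i$th coordinate, produce a.s.\ vertices that differ from both neighbours by more than $s$ in coordinate $i$ in the initial configuration and stay so isolated regardless of subsequent interactions; the corresponding vertices are therefore $\rho$-isolated for ever, giving a.s.\ no consensus whatever the value of $\theta$. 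Note that this part needs neither local domination nor a finite radius or second moment, only weak convexity (implicit in the standing assumptions of this section) together with sensitivity.
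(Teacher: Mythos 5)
Your proposal is correct and follows essentially the same route as the paper: both proofs retrace Theorem \ref{nogap} and assign the hypotheses exactly the same roles — local domination to transfer the Euclidean SLLN and the energy dissipation bound of Prop.\ 6.1 in \cite{ShareDrink} to $\rho$, weak convexity to keep SAD-averages and two-sidedly $\epsilon$-flat opinions inside $\rho$-balls, the finite second moment to make $\E[W_0(v)]$ finite, and sensitivity to reduce part (b) to the univariate unbounded case via a threshold $s$ with $\varphi(s)>\theta$. The only cosmetic difference is that you state the domination step in the forward direction (Euclidean discrepancy $\to 0$ forces $\rho$-discrepancy $\to 0$) where the paper argues contrapositively ($\rho$-distance $\geq\delta$ within the confidence bound forces Euclidean distance $\geq\delta/C$), which is the same use of (\ref{domi}).
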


\begin{proof}
\begin{enumerate}[(a)]
\item The proof of this theorem is exactly the same as the proof of Theorem \ref{nogap}. One only has to check
that the additional requirements on $\rho$ make up for the crucial properties of the Euclidean norm that were
used in the cited proof. The (multivariate) SLLN states that the averages in (\ref{rhoflat}) for large $n$ are
close to the mean in Euclidean distance, hence with respect to $\rho$ due to (\ref{domi}). Local modification
of the initial profile will then guarantee the existence of one-sidedly $\epsilon$-flat vertices.

The crucial role of $\epsilon$-flat vertices is preserved by the weak convexity of $\rho$: The proof of
Prop.\ 5.1 in \cite{ShareDrink} shows that given an edge $\langle v-1,v\rangle$ along which there have been no
updates yet, the opinion value at $v$ is a convex combination of averages as in (\ref{rhoflat}), hence lies
in $B_\rho(\E\eta_0,\epsilon)$ as well, if $v$ was $\epsilon$-flat to the right with respect to the initial
configuration, due to convexity of the $\rho$-balls.

As to the supercritical regime, the a.s.\ existence of two-sidedly $\epsilon$-flat vertices follows from the a.s.\
existence of one-sidedly $\epsilon$-flat vertices and the i.i.d.\ property of the initial configuration, just
as in the Euclidean case. The weak convexity of $\rho$ is needed once more to conclude that the opinion values
of two-sidedly $\epsilon$-flat vertices stay close to the mean, just as in La.\ 6.3 in \cite{ShareDrink}.

When we want to apply the argument of Prop.\ 6.1 in \cite{ShareDrink}, stating that neighbors will a.s.\ either finally
concur or the edge between them be blocked for large $t$, it is essential that condition (\ref{domi}), together
with the finite second moment, allows once again to borrow the energy idea. The extra condition of a finite
second moment implies the finiteness of the expected initial engergy $\E[W_0(v)]=\E[\eta_0(v)^{2}]$, as mentioned just
before the theorem.
If the opinions $\eta_{t}(u),\eta_{t}(v)$ of two neighbors are within the confidence bound with respect to $\rho$ but
$\rho(\eta_{t}(u),\eta_{t}(v))\geq\delta$ for some $\delta>0$, then due to (\ref{domi}):
$||\eta_{t}(u)-\eta_{t}(v)||_2\geq\tfrac{\delta}{C}$, where $C=\max\{c,\tfrac{\theta}{\gamma}\}>0$, see the
comments after Definition \ref{extracon}. This will cause an energy loss of at least $2\mu(1-\mu)(\tfrac{\delta}{C})^2$
when they compromise. Again, this cannot happen infinitely often with positive probability as the expected energy
at time $t=0$ is finite and the expected total energy preserved over time.

\item Given $\rho$ is sensitive to coordinate $i$, the idea of proof of the second claim can be reutilized as well.
The sensitivity leads to the fact that there is some $s>0$ s.t.\ $|x_i-y_i|> s$ implies $\rho(x,y)>\theta$. 
As alluded in the proof of Theorem \ref{nogap}, the arguments used for unbounded distributions in Thm.\ 2.2 in
\cite{Deffuant} show that under the given conditions, there are a.s.\ vertices that differ more than $s$ from both
their neighbors in the $i$th coordinate (with respect to the absolut value) in the initial configuration and this
will not change no matter whom their neighbors will compromise with. Consequently the corresponding opinion vectors
will always be at $\rho$-distance more than $\theta$.
\end{enumerate}
\end{proof}\vspace*{-1em}

 \begin{example}
 \begin{enumerate}[(a)]
 \item The $L^p$-norm for general $p\in[1,\infty]$ on $\R^k$ is defined as follows:
    $$\lVert x\rVert_p:=\Big(\sum_{i=1}^k |x_i|^p\Big)^{\tfrac1p}\quad\text{for } p\in[1,\infty)\text{ and}\quad
    \lVert x\rVert_{\infty}:=\max_{1\leq i\leq k} |x_i|.$$
    In fact, these norms are all equivalent. More precisely, for $1\leq q<p\leq\infty$:
    $$\lVert x\rVert_p\leq\lVert x\rVert_q\leq k^{\big(\tfrac1q-\tfrac1p\big)}\,\lVert x\rVert_p.$$
    This implies for all $p\in[1,\infty]$:
    $$\lVert x\rVert_p\leq\sqrt{k}\;\n{x}.$$
    In other words all induced metrics $\rho(x,y)=\lVert x-y\rVert_p$, are -- to be precise globally -- 
    dominated by the Euclidean distance.
    
    It is easy to check that the norm axioms guarantee the convexity of balls, hence
    the metric induced by $\lVert\,.\,\rVert_p$ is weakly convex for any $p\in[1,\infty]$.
    
    Furthermore, $\lVert x\rVert_p\geq k^{\big(\tfrac1p-1\big)}\lVert x\rVert_1\geq k^{\big(\tfrac1p-1\big)}|x_i|$
    for all $1\leq i\leq k$ implies sensitivity to every coordinate. In conclusion, both parts of Theorem \ref{nogaprho}
    can be applied to the Deffuant model with the metric induced by some $L^p$-norm, i.e.\
    $\rho(x,y)=\lVert x-y\rVert_p$, $p\in[1,\infty]$, as distance measure.
 
 \item    
    If the definition of $\lVert\,.\,\rVert_p$ is extended to values for $p$ in $(0,1)$, the corresponding
    functions are not subadditive, hence do not induce a metric.
    
    Raised to the power $p$, we get the distance measures
    $$\rho_p(x,y):=\big(\lVert x-y\rVert_p\big)^{p}=\sum_{i=1}^k |x_i-y_i|^p,$$ which
    are in fact metrics for all $p\in(0,\infty)$ and obviously sensitive to every coordinate. For $p\in(0,1)$
    these metrics fail to have convex balls. For $p\in[1,\infty)$ however, they are weakly convex which can be seen 
    from the weak convexity of $\lVert\,.\,\rVert_p$ as follows:
    \begin{align*}\rho_p(x, \alpha y+(1-\alpha)\,z)&=\big(\big\lVert x-\big(\alpha y+(1-\alpha)\,z\big)\big\rVert_p\big)^p\\
                                                &\leq\big(\max\{\lVert x-y\rVert_p,\lVert x-z\rVert_p\}\big)^p\\
                                                &=\max\{\rho_p(x,y),\rho_p(x,z)\}.
    \end{align*}
    
    The metrics $\rho_p,\ p\in[1,\infty)$ are no longer equivalent to the Euclidean distance, but still locally
    dominated in the sense of (\ref{domi}). In conclusion, Theorem \ref{nogaprho} equally applies to the Deffuant model
    where distances are taken with respect to $\rho_p$.
    
    More generally, given $\phi=(\phi_i)_{i=1}^k$ with non-negative functions $\phi_i$ defined on $\R_{\geq0}$ we
    can consider
    $$\rho_\phi(x,y):=\sum_{i=1}^k\phi_i\big(|x_i-y_i|\big).$$
    For this to be a proper metric, the $\phi_i$ have to be convex satisfying $\phi_i(s)=0$ if and only if $s=0$.
    Defined this way $\rho_\phi$ is convex, in particular weakly convex. It will be locally dominated by the Euclidean 
    distance by default and sensitive to coordinate $i$ if and only if $\phi_i(s)$ is unbounded as $s\to\infty$.
 \end{enumerate}
 \end{example}  
 
 \begin{example} \label{discr}
  The extra condition (\ref{domi}) cannot be dropped. Let us consider the discrete metric 
  $\rho(x,y)=\mathbbm{1}_{\{x\neq y\}}$ -- which is weakly convex -- on $\R$. Clearly, it is not locally dominated by the
  Euclidean metric. Let $\eta_0$ have the mixed distribution with constant density $\tfrac14$ on $[-1,1]$ and point
  mass $\tfrac12$ at $0$. Hence  $\mathcal{L}(\eta_0)$ has expectation $0$ and radius 1 (actually both with respect to
  $\rho$ and the Euclidean distance). Regarding (\ref{rhomatm}), we find $\Prob(\eta_0\in B_\rho(0,\epsilon))\geq\tfrac12$
  for all $\epsilon\geq0$. Take $\mu\in(0,\tfrac12]$ to be a transcendental number (e.g.\ $\tfrac{1}{\pi}$).
  Furthermore, we choose $\theta\geq2$ which obviously makes blocked edges impossible.
  
  At every time $t$, $\eta_t(v)$ is a finite (but random) convex combination of the initial opinions 
  $\{\eta_0(y)\}_{y\in\Z}$, say
  \begin{equation}\label{SADrep}\eta_t(v)=\sum_{y\in\Z}\xi_{v,t}(y)\,\eta_0(y),\end{equation}
  which is the SAD representation, see La.\ 3.1 in \cite{ShareDrink}. Almost surely, there are two edges that do not
  experience Poisson events up to time $t$ and enclose $v$. It is not hard to show -- by induction on the (a.s.\ finitely
  many) Poisson events occurring up to time $t$ on the edges between those two -- that the non-zero factors $\xi_{v,t}(y)$
  in the representation of $\eta_t(v)$ are (random) polynomials in $\mu$ with integer coefficients. Furthermore, for
  $y\neq v$ they have no constant term, for $y=v$ the constant term equals $1$:
  At time $0$ we find $\xi_{u,0}(y)=\mathbbm{1}_{\{u=y\}}$ for all $u,y\in\Z$. With a Poisson event at
  time $s$ on the edge $\langle u, u+1\rangle$ that actually causes an update, the coefficients change according to
  \begin{equation*}\begin{array}{rl}\xi_{u,s}(y)&\!\!\!=\,(1-\mu)\,\xi_{u,s-}(y)+\mu\,\xi_{u+1,s-}(y)\\
                 \xi_{u+1,s}(y)&\!\!\!=\,\mu\,\xi_{u,s-}(y)+(1-\mu)\,\xi_{u+1,s}(y),\end{array}
   \end{equation*}
  for all $y\in\Z$, compare with (\ref{transf}). This establishes the induction step.\\[1em]
  \noindent
  Using the representation (\ref{SADrep}) we find for two neighbors $u,v$: 
  $$\eta_t(v)-\eta_t(u)=\sum_{y\in\Z}\big(\xi_{v,t}(y)-\xi_{u,t}(y)\big)\,\eta_0(y).$$
  As $\xi_{v,t}(v)-\xi_{u,t}(v)$ is a non-zero polynomial in $\mu$ with integer coefficients, it cannot be zero.
  Additionally, due to the fact that $\theta\geq2$, the $\xi$-factors only depend on the Poisson events, which implies
  that the two random variables 
  $$X:=\frac{1}{\xi_{v,t}(v)-\xi_{u,t}(v)}\;\sum_{y\neq v}\big(\xi_{v,t}(y)-\xi_{u,t}(y)\big)\,\eta_0(y)$$
  and $\eta_0(v)$ are independent. Since $\Prob(\eta_0(v)=0)=\Prob(\eta_0(v)\neq0)=\tfrac12$, we get
  $$\Prob(\eta_t(v)-\eta_t(u)\neq0)\geq\Prob(X=0,\eta_0(v)\neq0)+\Prob(X\neq0,\eta_0(v)=0)=\tfrac12.$$
  This leads to
  $$\Prob\Big(\limsup_{t\to\infty}\rho\big(\eta_t(u),\eta_t(v)\big)=1\Big)\geq\tfrac12$$
  for all neigbors $u,v$, which renders even weak consensus impossible.
  
  In fact, with this choice of initial distribution and metric, the Deffuant model exhibits a limiting behavior
  that is not a.s.\ approaching one of the scenarios described in Definition \ref{states}, since it does not
  feature blocked edges, nor almost sure consensus formation in the long run -- instead at any time $t$ the opinions
  of two neighbors are with probability at least $\tfrac12$ at distance $1$, always at speaking terms but not converging.

  Since the choice of $\theta$ is trivial, we can find out what happens by looking at the Deffuant model employing
  the Euclidean distance instead. By Theorem \ref{nogap} all opinions will a.s.\ approach the mean $0$,
  but whenever two of them do not coincide they are at $\rho$-distance 1.  
\end{example}  

\begin{example} 
  To illustrate the importance of the sensitivity in part (b) of Theorem \ref{nogaprho}, let us consider the
  two metrics $d(x,y)=\n{x-y}$, that is the Euclidean metric, and 
   $$\rho(x,y)=\begin{cases}\n{x-y},&\text{if }\n{x-y}\leq1\\
                            1,&\text{otherwise.}
   \end{cases}$$
  Evidently, $\rho$ is not sensitive to any coordinate and that it is weakly convex is not hard to check either:
  For $r<1$ the balls $B_{\rho}(x,r)$ are the same as the Euclidean balls, for $r\geq1$ we get
  $B_{\rho}(x,r)=\R^k$. So in either case it is a convex set.
  
  For simplicity, let us take $k$ to be $1$ -- the Euclidean distance is then induced by the absolute value -- 
  and choose the standard normal distribution $\mathcal{N}(0,1)$ as initial distribution. Due to $\rho(x,y)\leq|x-y|$, 
  $\rho$ is locally dominated by the Euclidean distance. As the normal distribution has a finite second moment
  and mass around its mean, part (a) of Theorem \ref{nogaprho} shows that in the Deffuant model using $\rho$
  as the distance measure, the radius $R_\rho=1$ marks the critical value for $\theta$ at which we have a
  phase transition from a.s.\ no consensus to a.s.\ strong consensus.
  
  In the Deffuant model using the Euclidean distance however, there will a.s.\ be no consensus irrespectively
  of $\theta$ according to Theorem \ref{gen} (b). 
\end{example} 
\vspace{0.5em}
\noindent
The final aim will now be to prove a generalization of Theorem \ref{gapsEucl} to the Deffuant model with general
metric $\rho$ instead of the Euclidean. In order to be able to do this we have to transfer the necessary auxiliary
results leading to Theorem \ref{gapsEucl}, essentially by replacing all occurring 
Euclidean distances by distances with respect to $\rho$, however it requires small adjustments.

\begin{definition}
Consider a random variable $\xi$ on $(\R^k,\rho)$. The {\itshape support} of its distribution is the following
subset of $\R^k$, closed with respect to $\rho$:
\begin{equation}\label{suprho}
\supp(\xi):=\left\{x\in\R^k,\;\Prob\big(\xi\in B_\rho(x,r)\big)>0\ \text{for all }r>0\right\}.
\end{equation}
\end{definition}

\begin{remark}
The last argument in the proof of Proposition \ref{Radius} shows $\supp(\eta_0)\subseteq B_\rho[\E\eta_0,R_\rho]$
for all initial distributions bounded with respect to $\rho$. The first part of its proof, i.e.\ showing that
$\supp(\eta_0)\subseteq B[\E\eta_0,r]$ implies $\Prob(\eta_0\in B[\E\eta_0,r])=1$, is based on the
theorem of Heine-Borel, stating that closed and bounded sets are compact in $(\R^k,\n{\,.\,})$, which does not
hold for general metric spaces.
For the discrete metric (see Example \ref{discr}) and a probability measure without point masses, the set defined
in (\ref{suprho}) is in fact empty. 

If however $(\R^k,\rho)$ is separable -- i.e.\ there exists a countable dense subset -- we get 
$\Prob(\xi\in\supp(\xi))=1$ for any random variable $\xi$ (see e.g.\ Thm.\ 2.1, p.\ 27 in \cite{Partha}),
and thus the full statement of Proposition \ref{Radius}.

Given $\rho$ is locally dominated by the Euclidean distance, we can immediately conclude that $(\R^k,\rho)$ is separable,
since due to (\ref{domi}) the set $\mathbb{Q}^k$ is not only dense in $(\R^k,\n{\,.\,})$ but also in $(\R^k,\rho)$.

In conclusion, if $(\R^k,\rho)$ is separable and $\eta_0$ has a finite expectation, its distribution's radius can be
written as $R_\rho=\sup\{\rho(\E\eta_0,x),\; x\in\supp(\eta_0)\}$.
\end{remark}

\noindent
Adjusting the definition of $\D_\theta(\nu)$ (see Definition \ref{Dtheta}) to the general setting by substituting
$\rho$-balls for Euclidean balls -- let us denote the resulting set by $\D_\theta^\rho(\nu)$ -- allows to reuse the
arguments in the lemmas dealing with its properties.
Although referencing to Proposition \ref{Radius}, in order to prove Lemma \ref{properties} only 
$\supp(\eta_0)\subseteq B[\E\eta_0,R]$ was needed, hence its statement is true for any weakly convex $\rho$ --
with the terms related to closure now referring to the topology generated by $\rho$.

As the final conclusions similar to Theorem \ref{gapsEucl} will require $\rho$ to be locally dominated by the
Euclidean distance, let us assume for the remainder of this section that $\rho$ is not only weakly convex but
also (\ref{domi}) holds.

When it comes to the central Lemma \ref{circle}, the claims that can be modified to hold for such $\rho$ 
as well without major efforts read as follows (again connectedness and closure refer to the topology generated by $\rho$):

\begin{lemma}\label{circlerho}
Let $\rho$ be a weakly convex metric locally dominated by the Euclidean distance.
\begin{enumerate}[(a)]
  \item For all $x\in\R^k$ and $0\leq \delta<\tfrac{\theta}{2}$, the set $\D_\theta^\rho(\nu)\cap B_\rho[x,\delta]$
        is convex.
  \item The connected components of $\D_\theta^\rho(\nu)$ are convex and at $\rho$-distance at least
        $\theta$ from one another. If $\D_\theta^\rho(\nu)$ is connected, then
        $\D_\theta^\rho(\nu)=\overline{\conv(\supp(\eta_0))}$.
  \item If $R_\rho<\infty$ and $\nu$ has mass around its mean, i.e.\ condition (\ref{rhomatm}) holds, then
        $\D_{\theta}^\rho(\nu)=\overline{\conv(\supp(\eta_0))}$ for all $\theta>R_\rho$.
  \item If $\D_\theta^\rho(\nu)$ is connected and $\E\eta_0$ finite, then $\E\eta_0\in\D_\theta^\rho(\nu)$
\end{enumerate}
\end{lemma}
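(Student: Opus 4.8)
The plan is to run, part by part, the proofs of the corresponding items (a), (c), (d) and (f) of Lemma~\ref{circle}, replacing every Euclidean ball and Euclidean distance by its $\rho$-counterpart; the two structural properties of $\rho$ that keep these arguments alive are weak convexity and local domination by the Euclidean distance. Weak convexity gives that a convex combination of points in a $\rho$-ball $B_\rho[x,\delta]$ again lies in $B_\rho[x,\delta]$, and, iterating the defining inequality of Definition~\ref{extracon}(iii), that $\rho(w,w')\le\max_i\rho(w,v_i)$ whenever $w'$ is a convex combination of the $v_i$; this is exactly what is needed so that, once a finite collection of opinion values all lie within pairwise $\rho$-distance $<\theta$, every edge may keep firing and iterated averaging drives the whole block to its arithmetic mean. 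Hence the argument of Lemma~\ref{circle}(a) carries over almost verbatim and in fact yields the slightly more flexible statement that $\conv(\{y,z\})\subseteq\D_\theta^\rho(\nu)$ for \emph{any} $y,z\in\D_\theta^\rho(\nu)$ with $\rho(y,z)<\theta$: concatenate $m$ copies of a finite configuration with final values near $y$ and $n$ copies of one with final values near $z$, note that all values then stay in the convex hull of those near-$y$ and near-$z$ points, a set of $\rho$-diameter $<\theta$, and average. Restricting to $y,z\in B_\rho[x,\delta]$ with $2\delta<\theta$ gives part~(a).

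The technical core is part~(b). Following the proof of Lemma~\ref{circle}(c), I would first use part~(a) to turn connectedness into path-connectedness by polygonal chains lying in $\D_\theta^\rho(\nu)$, and then straighten a chain joining $x$ to $y$: after the usual sparsification, at the corner $x_j$ with the largest angular defect replace the stretch of the (affine) chain inside $B_\rho[x_j,\delta]$ — a convex set by weak convexity, whose intersection with $\D_\theta^\rho(\nu)$ is convex by part~(a) — by the chord between the two points where the chain crosses $\partial B_\rho[x_j,\delta]$. The one genuine adjustment compared with the Euclidean case is that $\rho$-balls need not be round, so the geometric estimate bounding below the gain of each straightening step must be redone: here one invokes local domination to fit a true Euclidean ball $B[x_j,\gamma']\subseteq B_\rho[x_j,\delta]$ with $\gamma'$ depending only on $\gamma,c$ from~(\ref{domi}) and on $\delta$, which forces the two crossing points to lie at Euclidean distance $\ge\gamma'$ from $x_j$ and hence makes the drop in Euclidean length of the chain bounded below in terms of $\gamma'$, the angular defect at $x_j$, and the total length. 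Iterating, the interior angles tend to $\pi$ uniformly, the chains converge to the segment $[x,y]$ in Euclidean distance, and, since local domination makes Euclidean convergence imply $\rho$-convergence, also in $\rho$; as the component is $\rho$-closed, $\conv(\{x,y\})$ lies in it. That distinct components are at $\rho$-distance at least $\theta$ is immediate from the flexible form of part~(a) (a closer pair would merge them), and if $\D_\theta^\rho(\nu)$ is connected it is then convex, $\rho$-closed and contains $\supp(\eta_0)$, so it equals $\overline{\conv(\supp(\eta_0))}$ by the weakly-convex analogue of Lemma~\ref{properties}.

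Parts~(c) and~(d) are then short. For~(c), condition~(\ref{rhomatm}) places $\E\eta_0$ in $\supp(\eta_0)\subseteq\D_\theta^\rho(\nu)$, and for $\theta>R_\rho$ every $y\in\D_\theta^\rho(\nu)$ lies in $B_\rho[\E\eta_0,R_\rho]$ (by the analogue of Lemma~\ref{properties} together with $\supp(\eta_0)\subseteq B_\rho[\E\eta_0,R_\rho]$), so $\rho(\E\eta_0,y)\le R_\rho<\theta$ and the flexible form of part~(a) puts each segment $\conv(\{\E\eta_0,y\})$ inside $\D_\theta^\rho(\nu)$; thus $\D_\theta^\rho(\nu)$ is star-shaped about $\E\eta_0$, hence connected, and part~(b) identifies it with $\overline{\conv(\supp(\eta_0))}$. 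For~(d), note that local domination makes the $\rho$-topology coarser than the Euclidean one, so $\D_\theta^\rho(\nu)$, being $\rho$-closed and convex when connected (part~(b)), is a Euclidean-closed convex subset of $\R^k$; the orthogonal-projection argument of Lemma~\ref{circle}(f) then applies unchanged, using only that $\eta_0\in\supp(\eta_0)$ almost surely, which holds because $(\R^k,\rho)$ is separable once $\rho$ is locally dominated by the Euclidean distance. The single real obstacle in all of this is the quantitative straightening step in part~(b) with non-round $\rho$-balls; it is precisely there that one needs $\rho$ to be locally dominated by the Euclidean distance and not merely weakly convex.
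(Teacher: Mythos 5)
Your proposal is correct and follows essentially the same route as the paper: part (a) via the $\min\{\tfrac{r}{4c},\tfrac{\gamma}{2}\}$-type adjustment so that (\ref{domi}) controls the $\rho$-error of the approximating convex combination, part (b) by inscribing a Euclidean ball of radius $\min\{\gamma,\delta/c\}$ into the possibly ill-shaped $\rho$-ball to salvage the quantitative corner-cutting estimate, and parts (c), (d) by noting that $\rho$-closedness implies Euclidean closedness under (\ref{domi}) so that the star-shapedness and orthogonal-projection arguments go through. Your additional observation that the projection argument in (d) needs $\Prob(\eta_0\in\supp(\eta_0))=1$, which follows from separability of $(\R^k,\rho)$ under local domination, is exactly the point the paper makes in the remark preceding the lemma.
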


\begin{proof}
The proof is essentially identical to the one of Lemma \ref{circle}. In part (a) we only have to choose
$m,n\in\N$ such that
$$\left|\frac{m}{m+n}-\alpha\right|\leq
\frac{\min\{\tfrac{r}{4c},\tfrac{\gamma}{2}\}}{\max\{\n{y},\n{z}\}}.$$
Then 
$$\n{(\tfrac{m}{m+n}\,y+\tfrac{n}{m+n}\,z)-(\alpha y+(1-\alpha)z)}\leq
|\tfrac{m}{m+n}-\alpha|\cdot\n{y}+|\alpha-\tfrac{m}{m+n}|\cdot\n{z}\leq\gamma,$$
which together with (\ref{domi}) implies
 \begin{align*}
         \rho\big(\eta_N(v),\alpha y+(1-\alpha)z\big)&\leq
         \tfrac{r}{2}+\rho\big(\tfrac{m}{m+n}\,y+\tfrac{n}{m+n}\,z,\alpha y+(1-\alpha)z\big)\\
         &\leq\tfrac{r}{2}+c\,\n{(\tfrac{m}{m+n}\,y+\tfrac{n}{m+n}\,z)-(\alpha y+(1-\alpha)z)}\\
         &\leq \tfrac{r}{2}+c\,(|\tfrac{m}{m+n}-\alpha|\cdot\n{y}+|\alpha-\tfrac{m}{m+n}|\cdot\n{z})
         \leq r.
 \end{align*}

\vspace{0.5em}
\noindent
As to part (b), we can follow the first part of the proof of Lemma \ref{circle} (c) replacing every
Euclidean distance by $\rho$ until the angles are considered. Since $B_\rho[x_j,\tfrac{\theta}{2}]$
might be oddly shaped, we can define $r:=\min\{\tfrac{\theta}{2c},\gamma\}>0$ and consider the Euclidean
ball $B[x_j,r]$ which by (\ref{domi}) is contained in $B_\rho[x_j,\tfrac{\theta}{2}]$. Cutting short
an angle $\alpha$ as described there, will now reduce the (Euclidean) length of the polygonal chain
by at least $2r\cdot(1-\cos(\alpha))$ and the argument goes through yielding that the Euclidean
closure of the component $C$ connected with respect to $\rho$ contains $\conv(\{x,y\})$. It follows from
the generalized statement of Lemma \ref{properties} that being a component of $\D_\theta^\rho(\nu)$, $C$
is $\rho$-closed. This in turn implies that $C$ is also closed with respect to the Euclidean
distance, using (\ref{domi}), and hence containing $\conv(\{x,y\})$. The rest of the claim easily
follows, again by replacing $\n{x-y}$ by $\rho(x,y)$.

\vspace{1em}
\noindent
Part (c) is an easy consequence of the arguments leading to (a) and (b) that can be verified just
as in the proof of Lemma \ref{circle} (d).

\vspace{1em}
\noindent
Finally, the only insight needed to accept the proof of Lemma \ref{circle} (f) as proof of claim (d)
above is that $\D_\theta^\rho(\nu)$, being closed in $(\R^k,\rho)$, is also closed in the Euclidean
space $(\R^k,\n{\,.\,})$, due to (\ref{domi}).
\end{proof}

\begin{definition}
Corresponding to Definition \ref{suppdef}, let the support of the distribution of $\eta_t$ in the
Deffuant model with parameter $\theta$ and distance measure $\rho$ be denoted by
$\supp_\theta^\rho(\eta_t)$.

Respectively, the length of the largest gap in $\supp(\eta_0)$ with respect to $\rho$ will be given by 
$$h_\rho:=\inf\{\theta>0,\;\D_\theta^\rho(\nu)\text{ is connected in } (\R^k,\rho)\},$$
compare with Definition \ref{gap}.
\end{definition}

\noindent
Following the arguments in the proof of Lemma \ref{suppt} with scrutiny reveals that the corresponding
statements are also true for $\supp_\theta^\rho(\eta_t)$ in place of $\supp_\theta(\eta_t)$ and
$B_\rho[\E\eta_0,R_\rho]$ substituting $B[\E\eta_0,R]$ -- actually even for metrics which are only weakly
convex and not locally dominated by the Euclidean distance for only the convexity of
$B_\rho[\E\eta_0,R_\rho]$ is needed. Concerning Proposition \ref{supp_t} however,
we will not bother with the proof of a similar statement for the Deffuant model with general $\rho$.
The only fact needed in the upcoming theorem is 
$$\supp_\theta^\rho(\eta_t)\subseteq\D_{\theta+\epsilon}^\rho(\nu)\quad\text{for }\epsilon>0,$$
which readily follows from the last argument in the proof of this very proposition. Having followed up
the crucial intermediate steps makes it possible to slightly modify the proof of Theorem \ref{gapsEucl}
in order to get an argument establishing the following result:

\begin{theorem}\label{gapsrho}
Consider the Deffuant model on $\Z$ with opinion values in $(\R^k,\rho)$, where the corresponding
distance measure $\rho$ is a weakly convex metric, locally dominated by the Euclidean distance.
Assume it features an initial opinion distribution which has a finite second moment and is bounded with
respect to $\rho$, i.e.\
$$R_\rho=\inf\left\{r>0,\;\Prob\big(\eta_0\in B_\rho[\E\eta_0,r]\big)=1\right\}<\infty.$$
If $h_\rho$ denotes the length of the largest gap in its support, then
the critical value for the confidence bound, where a phase transition from a.s.\ no consensus
to a.s.\ strong consensus takes place is $\theta_\text{\upshape c}=\max\{R_\rho,h_\rho\}$.
\end{theorem}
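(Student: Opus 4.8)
The plan is to run through the same three scenarios as in the proof of Theorem \ref{gapsEucl}, with every Euclidean ball replaced by a $\rho$-ball and invoking the $\rho$-versions of the auxiliary facts already in place: Lemma \ref{circlerho}, the generalizations of Lemmas \ref{properties} and \ref{suppt} mentioned above, the inclusion $\supp_\theta^\rho(\eta_t)\subseteq\D_{\theta+\epsilon}^\rho(\nu)$ for $\epsilon>0$, and the energy argument already adapted in the proof of Theorem \ref{nogaprho}. Concretely, one shows: (i) for $\theta<h_\rho$ there is a.s.\ no consensus; (ii) for $\theta<R_\rho$ there is a.s.\ no consensus; (iii) for $\theta>\max\{R_\rho,h_\rho\}$ there is a.s.\ strong consensus. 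Putting these together pins down $\theta_\text{\upshape c}=\max\{R_\rho,h_\rho\}$ (the behaviour exactly at criticality being left open, as it is in Theorem \ref{gapsEucl}).

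For case (i) I would first note that $\{\vartheta>0:\D_\vartheta^\rho(\nu)\text{ is connected}\}$ is an up-set: if $\D_{\vartheta_0}^\rho(\nu)$ is connected it equals $\overline{\conv(\supp(\eta_0))}$ by Lemma \ref{circlerho}(b), and since $\D_\vartheta^\rho(\nu)$ increases with $\vartheta$ and is always contained in $\overline{\conv(\supp(\eta_0))}$ (generalized Lemma \ref{properties}), it must equal that set for all $\vartheta\geq\vartheta_0$. Hence for $\theta<h_\rho$ and $\epsilon\in(0,h_\rho-\theta)$ the set $\D_{\theta+\epsilon}^\rho(\nu)$ is still disconnected, with connected components that are convex and at pairwise $\rho$-distance $\geq\theta+\epsilon$ by Lemma \ref{circlerho}(b); at least two of these components, say $C_1$ and $C_2$, meet $\supp(\eta_0)$, since otherwise $\conv(\supp(\eta_0))$ would lie in a single component, forcing $\D_{\theta+\epsilon}^\rho(\nu)$ to be connected. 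Because $\supp(\eta_0)\subseteq\supp_\theta^\rho(\eta_t)\subseteq\D_{\theta+\epsilon}^\rho(\nu)$ and $\supp_\theta^\rho(\eta_t)\setminus C_i$ lies at $\rho$-distance $\geq\theta+\epsilon>\theta$ from $C_i$, in the Deffuant model with bound $\theta$ an opinion starting in $C_i$ can never compromise with an opinion outside $C_i$ and so stays in the convex set $C_i$ forever; thus $\Prob(\eta_t(v)\in C_i)=\Prob(\eta_0(v)\in C_i)>0$. Independence then gives $\Prob(\eta_0(v)\in C_1,\,\eta_0(v+1)\in C_2)>0$, dooming the edge $\langle v,v+1\rangle$, and ergodicity produces a.s.\ infinitely many such blocked edges.

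Case (ii) is the one place where the argument genuinely has to be reworked, since the half-space used in step (ii) of the Euclidean proof is an intrinsically Euclidean object while blocking is now measured in $\rho$. Here I would use that, $\rho$ being locally dominated by the Euclidean distance, $(\R^k,\rho)$ is separable, so $R_\rho=\sup\{\rho(\E\eta_0,x):x\in\supp(\eta_0)\}$; pick $y\in\supp(\eta_0)$ with $\rho(\E\eta_0,y)>\theta+2\delta$ for some $\delta>0$. The $\rho$-ball $B_\rho[y,\theta+\delta]$ is convex by weak convexity and Euclidean-closed (the $\rho$-topology is coarser than the Euclidean one by (\ref{domi}), so $\rho$-closed sets are Euclidean-closed) and does not contain $\E\eta_0$; by the separating-hyperplane theorem in $\R^k$ there is a Euclidean half-space $H$ with $\E\eta_0$ in its Euclidean interior and $H\cap B_\rho[y,\theta+\delta]=\emptyset$, whence $\rho(z,y)>\theta$ for every $z\in H$. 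Now $\Prob(\eta_0\in H)>0$ by the barycenter argument of step (ii) of the proof of Theorem \ref{gapsEucl}, the set $\{z:\rho(z,H)>\theta\}$ is $\rho$-open and contains $y\in\supp(\eta_0)$, hence has positive probability, and the SLLN together with the stochastic-domination and local-modification device from that same step produces, with positive probability, a vertex $v$ with $\eta_0(v)\in\{z:\rho(z,H)>\theta\}$ whose left and right running averages stay in the convex set $H$ for all time; convexity of $H$, the bound $\rho(\eta_0(v),H)>\theta$, and ergodicity then finish this case exactly as in the Euclidean argument.

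Finally, for case (iii) I would pick $\beta\in\big(0,\theta-\max\{R_\rho,h_\rho\}\big)$; since $\theta-\beta>h_\rho$ the set $\D_{\theta-\beta}^\rho(\nu)$ is connected, so $\E\eta_0\in\D_{\theta-\beta}^\rho(\nu)$ by Lemma \ref{circlerho}(d), and the definition of $\D_{\theta-\beta}^\rho(\nu)$ then supplies, for every $\epsilon>0$, a finite configuration whose updated values all lie in $B_\rho(\E\eta_0,\tfrac{\epsilon}{6})$. From here the argument copies step (iii) of the proof of Theorem \ref{gapsEucl}: one first establishes the analogue of (\ref{outerpart}), that the large-window averages of $\{\eta_t(u)\}$ converge to $\E\eta_0$ in Euclidean distance — the dynamics preserves Euclidean block averages between edges carrying no Poisson event up to time $t$, and the finite-second-moment hypothesis controls the contribution of the (a.s.\ $O(\log k)$-sized) boundary blocks even though $\overline{\conv(\supp(\eta_0))}$ need no longer be Euclidean-bounded — and then upgrades this to $\rho$-convergence via (\ref{domi}). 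A conditional local modification combining this with the finite configuration makes the origin two-sidedly $\epsilon$-flat with respect to $\rho$ at time $t$ with positive probability. The supercritical conclusion then follows as in Theorem \ref{nogaprho}(a): weak convexity of $\rho$ keeps two-sidedly $\epsilon$-flat vertices within $\rho$-distance $6\epsilon$ of the mean, the energy argument of Prop.\ 6.1 in \cite{ShareDrink} — valid thanks to (\ref{domi}) and the finite second moment, exactly as in the proof of Theorem \ref{nogaprho} — forces neighbours to a.s.\ either concur or end up more than $\theta$ apart, and choosing $6\epsilon<\theta-R_\rho$ rules out the latter around a two-sidedly $\epsilon$-flat vertex, so concurrence propagates inductively and ergodicity yields a.s.\ strong consensus. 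I expect the two points where Euclidean and $\rho$-geometry must be reconciled to be the real work: the separating-hyperplane construction in case (ii), and the Euclidean law of large numbers for $\eta_t$ in case (iii) when the support is not Euclidean-bounded.
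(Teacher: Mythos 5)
Your proposal is correct and follows essentially the same route as the paper: the same three cases, the same $\rho$-adapted auxiliary lemmas, and the same separating-half-space, flat-point and energy devices. The only deviations are cosmetic --- in case (ii) you strictly separate the point $\E\eta_0$ from the convex closed set $B_\rho[y,\theta+\delta]$ where the paper separates the two balls $B_\rho[\E\eta_0,\epsilon]$ and $B_\rho[y,\theta+\epsilon]$ via their nearest points, and in case (iii) you are actually more explicit than the paper about why the finite second moment compensates for the possible Euclidean unboundedness of the support when establishing the time-$t$ law of large numbers.
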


\begin{proof}
As mentioned, the reasoning follows closely the proof of Theorem \ref{gapsEucl}. In case (i), where
$\theta<h_\rho$ we can conclude from Lemma \ref{suppt} and the above remarks that for $\epsilon>0$
such that $\theta+\epsilon<h_\rho$ it follows that
$$\supp(\eta_0)\subseteq\supp_\theta^\rho(\eta_t)\subseteq\D_{\theta+\epsilon}^\rho(\nu).$$
The set $\D_{\theta+\epsilon}^\rho(\nu)$ is not connected (with respect to $\rho$) by definition of
$h_\rho$, hence comprises convex components $C_1$ and $C_2$ at $\rho$-distance at least $\theta+\epsilon$ (see
Lemma \ref{circlerho}). Again, we can choose the components such that $\Prob(\eta_0\in C_i)>0$ for $i=1,2$,
since if we had $\Prob(\eta_0\in C_1)=1$, the fact that $C_1$ is closed with respect to $\rho$
would give $\supp(\eta_0)\subseteq C_1$ and so (using its convexity and the generalization of Lemma \ref{properties})
$$\D_{\theta+\epsilon}^\rho(\nu)\subseteq\overline{\conv(\supp(\eta_0))}\subseteq C_1.$$
But $C_1=\D_{\theta+\epsilon}^\rho(\nu)$ contradicts the disconnectedness.

Consequently, for a fixed vertex $v$ independence of the initial opinions guarantees that the event
$\{\eta_0(v)\in C_1,\eta_0(v+1)\in C_2\}$ has positive probability, which dooms the edge $\langle v, v+1\rangle$
to be blocked by $\rho(\eta_t(v),\eta_t(v+1))\geq\theta+\epsilon$ for all $t\geq0$. Indeed, in the Deffuant model
with parameter $\theta$, $\eta_t(v)$ can not leave the convex set $C_1$ since 
$\supp_\theta^\rho(\eta_t)\setminus C_1$, being a subset of $\D_{\theta+\epsilon}^\rho(\nu)\setminus C_1$,
is at distance at least $\theta+\epsilon$ to $C_1$ for all $t$. The same holds for $\eta_t(v+1)$ and $C_2$
respectively. Due to ergodicity, the existence of blocked edges is therefore an almost sure event.\\[1em]
\noindent
The analysis of case (ii), $\theta<R_\rho$, requires likewise only minor adjustments of the argument in
the proof of Theorem \ref{gapsEucl}. To begin with, the finite second moment of $\eta_0$ implies $\E\eta_0\in\R^k$,
which is not ensured by $R_\rho<\infty$ itself. Let this time $y$ be an element of $\supp(\eta_0)\setminus
B_\rho[\E\eta_0,\theta+2\epsilon]$, which is non-empty for $\epsilon\in(0,\tfrac{R-\theta}{2})$.
Since both $B_\rho[y,\theta+\epsilon]$ and $B_\rho[\E\eta_0,\epsilon]$ are
convex and closed -- with respect to $\rho$ and thus $\n{\,.\,}$ due to (\ref{domi}) -- as well as disjoint, we can
choose $z_1\in B_\rho[y,\theta+\epsilon]$ and $z_2\in B_\rho[\E\eta_0,\epsilon]$ such that 
$$\n{z_1-z_2}=\min\{\n{a-b},\;a\in B_\rho[y,\theta+\epsilon]\text{ and }b\in B_\rho[\E\eta_0,\epsilon]\}>0$$
and then define $z=\tfrac12(z_1+z_2)$ and the half-space $H$ with respect to this point $z$ accordingly.
Note that $H$ contains $B_\rho[\E\eta_0,\epsilon]$ and is disjoint from $B_\rho[y,\theta+\epsilon]$, just as in the 
Euclidean setting, because of the convexity of $\rho$-balls and the choice of $z_1,z_2$.
Moreover, the local domination property (\ref{domi}) forces $B_\rho[\E\eta_0,\epsilon]$ to be a superset of
$B[\E\eta_0,\delta]$, where $\delta=\min\{\tfrac{\epsilon}{c},\gamma\}$, and thus that $\E\eta_0$ lies in the
Euclidean interior of $H$. Having established this, we can follow the rest of the argument (beginning with (\ref{SLLN}),
which again follows from the finite second moment of $\eta_0$) literally, having in mind that $y$ has $\rho$-distance larger than $\theta+\epsilon$ to $H$.\\[1em]
\noindent
Finally, in the supercritical case (iii), i.e.\ $\theta>\max\{R_\rho,h_\rho\}$, we only have to take
Lemma \ref{circlerho} as a replacement for Lemma \ref{circle} and again write $\rho$ for the appearing Euclidean
distances. It is crucial to notice, that limits with respect to the Euclidean distance as in the SLLN and
(\ref{outerpart}) are also limits with respect to $\rho$, once again using (\ref{domi}). Furthermore, in several
places either the triangle inequality or the convexity of Euclidean balls was used, but being a weakly convex
metric, $\rho$ has the corresponding properties. Using the idea of energy to conclude that two neighbors will a.s.\
either finally concur or end up with opinions further than $\theta$ apart from each other, the fact that
$\rho$ is locally dominated by the Euclidean distance is indispensable and employed as in the proof of Theorem
\ref{nogaprho} (a). This is also where the finiteness of the second moment is needed.
\end{proof}

\begin{example}
In order to discern in how far the results of this section do actually add to the univariate case as well,
let us finally consider a metric on $\R$ which is not translation invariant. One can take for example 
$\rho(x,y)=|x^3-y^3|$ for all $x,y\in\R$. This metric $\rho$ obviously generates convex balls, in other
words is weakly convex. However, since 
$$\frac{|x^3-y^3|}{|x-y|}=|x^2+xy+y^2|\to \infty\quad \text{as } x,y\to\infty$$
it is not locally dominated by the absolut value. Nevertheless, as long as we consider a fixed bounded
distribution this problem can be overcome -- as was pointed out just before Theorem \ref{nogaprho} --
since on any bounded interval (\ref{domi}) holds for $\rho$ and some properly chosen $c>0$.

If we consider the initial distribution $\nu=\text{\upshape unif}\{-\tfrac12,\tfrac12\}$, which has 
radius $R_\rho=\tfrac18$, we can conclude from Theorem \ref{gapsrho}, that the critical value for the confidence
bound is $\theta_\text{\upshape c}=\rho(-\tfrac12,\tfrac12)=\tfrac14$. Unlike the Euclidean case, this value will
change with a translation of the initial distribution: Taking $\eta_0+\tfrac32$ instead of $\eta_0$, in other
words $\nu=\text{\upshape unif}\{1,2\}$ as marginal distribution for the initial configuration, we find
$R_\rho=\tfrac{37}{8}$ and $\theta_\text{\upshape c}=\rho(1,2)=7$.
\end{example}

\subsection*{Acknowledgements}
First of all I would like to thank a referee for valuable comments to an earlier draft. Furthermore,
I am very grateful to my supervisor Olle Häggström for helpful discussions of the topic and his constant
support. I would also like to thank Peter Hegarty for bringing up the question about multidimensional opinion
spaces after my talk about the Deffuant model at the Workshop on Discrete Random Geometry in Varberg.

%\section*{About the author:}
%   We would like a short biographical sketch,
%   beyond just your affiliation to be placed
%   after the bibliography.
%   And below that, your full address.

\vspace{0.5cm}
\makebox[0.8\textwidth][l]{
	\begin{minipage}[t]{\textwidth}
	{\sc \small Timo Hirscher\\
   Department of Mathematical Sciences,\\
   Chalmers University of Technology,\\
   412 96 Gothenburg, Sweden.}\\
   hirscher@chalmers.se
	\end{minipage}}

\end{document}